 \theoremstyle{definition}
 \theoremstyle{remark}
 \numberwithin{equation}{section}
\newcommand{\RR}{\mathbb R}
\newcommand{\NN}{\mathbb N}
\newcommand{\CC}{\mathbb C}
\newcommand{\QQ}{\mathbb{Q}}
\newcommand{\calA}{\mathcal A}
\newcommand{\calK}{\mathcal K}
\newcommand{\calC}{\mathcal C}
\newcommand{\calH}{\mathcal H}
\newcommand{\calJ}{\mathcal J}
\newcommand{\calI}{\mathcal I}
\newcommand{\del}{\partial}
\newcommand{\dM}{\partial M}
\newcommand{\U}{\overline{\cup}}
\newcommand{\bff}{{b\!f}}
\newcommand{\imspec}{\operatorname{Im\, spec}}
\newcommand{\Specb}{\operatorname{Spec}_b}
\newcommand{\frakd}{\mathfrak{d}}
\newcommand{\ff}{{f\!\!f}}
\newcommand{\lf}{{l\!f}}
\newcommand{\rf}{{r\!f}}
\newcommand{\Id}{\mathrm{Id}}
\newcommand{\phg}{{\mathrm {phg}}}
\newcommand{\calV}{\mathcal{V}}
\newcommand{\calE}{\mathcal{E}}
\newcommand{\calG}{\mathcal{G}}
\newcommand{\calD}{\mathcal{D}}
\newcommand{\eps}{\epsilon}
\newcommand{\interior}[1]{\mathring{#1}}
\newcommand{\Span}{\operatorname{span}}
\newcommand{\id}{\operatorname{id}}
\newcommand{\Diff}{\operatorname{Diff}}
\newcommand{\Nhat}{\hat{N}}
\newcommand{\End}{\operatorname{End}}
\newcommand{\Ptilde}{\tilde{P}}
\renewcommand{\Re}{\operatorname{Re}}
\renewcommand{\split}{{\mathrm{split}}}
\newcommand{\Piperp}{\Pi^\perp}
\newcommand{\Mint}{\interior{M}}
\newcommand{\ext}{\text{ext}}
\newcommand{\bfy}{{\bf y}}
\newcommand{\bfz}{{\bf z}}
\newcommand{\Rtilde}{\tilde R}
\newcommand{\Diag}{\operatorname{Diag}}
\newcommand{\cphiT}{{}^{c\phi}T}
\newcommand{\calKbar}{\overline{\calK}}
\newcommand{\bT}{{}^{b}T}
\newcommand{\Cinfty}{C^\infty}
\newcommand{\bphi}{{b\phi}}
\newcommand{\bfR}{{\bf R}}
\newcommand{\bfA}{{\bf A}}
\newcommand{\II}{{\mathbb{II}}}
\newcommand{\dvol}{\operatorname{dvol}}
\newcommand{\sus}{{\mathrm{sus}}}
\newcommand\datver[1]{\def\datverp
{\par\boxed{\boxed{\text{Version: #1; Run: \today}}}}}
\newtheorem{theorem}{Theorem}
\newtheorem{proposition}{Proposition}
\newtheorem{definition}{Definition}
\begin{document}
%
%
%
%
%
%
%
%
%
\title[A Parametrix Construction for the Laplacian on $\QQ$-rank 1 Locally Symmetric Spaces]
 {A Parametrix Construction for the Laplacian on\\ $\QQ$-rank 1 Locally Symmetric Spaces
 }
\author{D. Grieser}
\address{Institut f\"ur Mathematik\br
Carl von Ossietzky Universit\"at Oldenburg\br
26111 Oldenburg\br
Germany}
\email{daniel.grieser@uni-oldenburg.de}
\author{E. Hunsicker}

\address{
Department of Mathematical Sciences\\
Loughborough University\\
Loughborough\\
LE11 3TU \\
UK}

\email{E.Hunsicker@lboro.ac.uk}

\thanks{This work was completed with the support of Leverhulme
  Trust Project Assistance Grant F/00 261/Z}
\subjclass{Primary 58AJ40; Secondary 35J75}

\keywords{Hodge Laplacian, pseudodifferential operators, symmetric spaces, noncompact spaces}


\begin{abstract}
This paper presents the construction of parametrices for the Gauss-Bonnet and Hodge Laplace operators on noncompact manifolds modelled on $\QQ$-rank 1 locally symmetric spaces.
These operators are, up to a scalar factor, $\phi$-differential operators, that is, they live in the generalised $\phi$-calculus studied by the authors in a previous paper, which extends work of Melrose and Mazzeo.
However, because they are not
 totally elliptic elements in this calculus, it is not possible to construct parametrices for these operators within
 the $\phi$-calculus.  
We construct parametrices for them in this paper using a combination of the $b$-pseudodifferential 
operator calculus of R. Melrose and the $\phi$-pseudodifferential operator
 calculus.  The construction simplifies and generalizes the construction done by Vaillant in his thesis for the Dirac operator.
 In addition, we study the mapping properties of these operators
 and determine the appropriate Hlibert spaces between which the Gauss-Bonnet and Hodge
 Laplace operators are Fredholm.  Finally, we establish regularity results for elements of the kernels of
these operators.
 \end{abstract}

\maketitle
\tableofcontents

\section{Introduction}
Analysis of the Laplacian on locally symmetric spaces has interesting applications in 
analytic number theory and in the relationships between analysis and topology on 
singular spaces.  An important tool for studying the Laplacian is the method of pseudodifferential
operators.  The basic philosophy of this approach is to define a space of 
pseudodifferential operators and one or more symbol maps on this space whose invertibility
defines a subset of (totally) elliptic elements for which parametrices can be constructed within
the calculus.  Ideally, the operator you are interested in studying, for us the Laplacian, 
will be a (totally) elliptic element in your calculus, which will then provide the necessary
tools for studying it.

In this paper, we are interested in the Gauss-Bonnet operator and its square, the 
Hodge Laplace operator, on $L^2$ differential forms over Riemannian
manifolds modelled on $\mathbb{Q}$-rank 1 locally symmetric spaces.  
These spaces can be naturally compactified to manifolds with boundary.
In this paper, we will refer to such manifolds with boundary as $\phi$-manifolds, and to their Riemannian metrics as $\phi$-cusp
metrics.  A $\phi$-manifold, $M$, 
will be a compact orientable manifold 
with boundary, $\partial M=Y$, which is the total space of a fibre
bundle $Y \stackrel{\phi}{\to} B$.  A $\phi$-cusp metric will be a metric $g$ on the interior $\interior M$ that satisfies the property that near the boundary of $M$, with respect to some trivialization of a neighborhood $U$ of the boundary $U \equiv [0,\epsilon)\times \partial M$,
it can be written in the form
\begin{equation}\label{eq1}
g = \frac{dx^2}{x^2} + \phi^*g_B + x^{2a}h.
\end{equation}
Here $a \in \mathbb{N}$, $x$ is the coordinate on $[0,\epsilon)$ and $h$ is a symmetric 2-tensor
on $\del{{M}}$ that restricts to a metric on each fibre of $\phi$, and $g_B$ is a Riemannian metric on $B$.  
Also, we assume that $\phi:Y\to B$ is a Riemannian fibration with respect to the metrics $\phi^*g_B + x^{2a}h$ on $Y$ and $g_B$ on $B$, for some (hence all) $x\in(0,\eps)$. 

In a previous paper \cite{GH1}, we constructed a calculus of pseudodifferential operators
adapted to the geometric setting of generalized $\phi$-manifolds with general $\phi$-cusp metrics
(which may have a lower order perturbation from the metrics considered in this paper).
This calculus, which we called the small ${\phi}$-calculus, is a generalisation of the 
$\phi$-calculus of Melrose and Mazzeo, \cite{MaMe}, in that it permits a broader possible set 
of metric degenerations than that calculus does.  In particular, the Melrose and Mazzeo calculus considers
operators associated to metrics as in equation \eqref{eq1} where $a=1$, whereas we allow $a$
to be any natural number. Also, we allow a stack of several fibrations at the boundary with different orders of degeneration.
A disappointing aspect of the small $\phi$-calculus in both the Melrose-Mazzeo setting and our more general
setting is that, although standard geometric operators such as Dirac
operators, the Laplace-Beltrami operator and the Hodge Laplacian are elements of this calculus, they
are typically not ``totally elliptic" elements, which means that it is not possible to find good parametrices for them
within this calculus.
However, the Hodge Laplacian has an additional structure reflecting the boundary fibration, which we can exploit to construct a parametrix in a larger $\phi$-calculus.

The parametrix construction combines aspects of the original $b$-calculus of Melrose and 
the $\phi$-calculus of \cite{GH1}.
The idea of combining elements of these two calculi to construct a parametrix was used 
first by Boris Vaillant in his thesis 
\cite{Va} for the Dirac operator on $\phi$-manifolds with $\phi$-cusp metrics where $a=1$ under certain geometric conditions.  The goal of this paper and the upcoming
paper \cite{GH3} is to simplify and generalise this construction, fully understand the space of operators
in which a  parametrix can be built, and characterise the conditions on $\phi$-differential operators that allow this approach to work.
One of the main differences between Vaillant's work and the work in this paper is that certain geometric obstructions arise in the construction of the parametrix of the (second order) Hodge Laplacian that do not arise in the construction of the parametrix
of the (first order) Dirac operator.  These obstructions have arisen in work of J. Mueller
\cite{JM} that studies the Hodge Laplacian on $\phi$-manifolds from 
a perturbation theory viewpoint.  Thus it is interesting to see how the obstructions also
arise using a pseudodifferential approach.

Many $\mathbb{Q}$-rank 1 locally symmetric spaces compactify to $\phi$-manifolds with $\phi$-cusp metrics.  However,
a general $\mathbb{Q}$-rank 1 locally symmetric space may compactify to a manifold
whose boundary is the total space of a double fibration as opposed to a single fibration.  
In a work in preparation,  \cite{GH3}, we consider this more general case, and in 
addition generalise our results to other 
elliptic geometric operators on such manifolds. However, the main ideas of this general work can already
be largely understood through the special case we consider in this paper, 
and the work of extending is quite technical.  Thus the case we consider here is useful not only
in its own right, but also for grasping the essential points of the general construction done in \cite{GH3}.
This work is a step towards the development
of an extended pseudodifferential operator calculus on locally symmetric spaces of any rank
and their generalisations.  

In order to state the main theorems, we need to introduce some notation.
A fundamental object in the analysis of the Gauss-Bonnet and Hodge Laplace operators is the bundle $\calKbar\to B$ of fibre-harmonic forms. This is the finite dimensional vector bundle over $B$ whose fibre $\calKbar_\bfy$ over $\bfy\in B$ is the space of harmonic forms on the fibre $F_\bfy=\phi^{-1}(\bfy)$, with respect to the metric $h_{|F_\bfy}$.
 We call a differential form on $\interior M$ fibre harmonic if its restrictions to all fibres over $U$ are harmonic, and fibre perpendicular if the restrictions are perpendicular, in the $L^2$ space of the fibre, to the harmonic forms. Fibre harmonic forms may be regarded as forms on $[0,\eps)\times B$ valued in $\calKbar$.  Our operators behave differently on the subspaces of fibre harmonic forms and  fibre perpendicular forms, and this difference is the main difficulty in the construction. It is reflected in the structure of the Sobolev spaces on which the operators are Fredholm. We call them split Sobolev spaces. They are denoted 
 $H^m_\split(M,\Lambda T^*M,\dvol_b)$ and defined in Section \ref{subsec:split sob}. Here the form bundle $\Lambda T^*M$ is endowed with the metric induced by $g$, and $\dvol_b$ is a b-volume form, that is, a volume form on $\interior M$ which near the boundary is $\frac1x$ times a smooth volume form on $M$.
 
The metric $g$ induces a flat connection on the bundle $\calKbar$, and this can be used to define a Gauss-Bonnet operator $D_V$ on $V=[0,\eps)\times B$ acting on forms with coefficients in $\calKbar$, see \eqref{eqn:D_V}. This is an elliptic b- (or totally characteristic) operator, hence has a discrete set of critical weights associated with it, for which the operator is not Fredholm between the natural weighted Sobolev spaces. We call this set $-\imspec (D_V)$, see \eqref{eqn:def imspec}.  This set also arises in the expansions  of harmonic forms over $M$ near the boundary.
Recall that a smooth form $u$ on $\interior M$ is called polyhomogeneous if at the boundary it has a full asymptotic
expansion  of the form:
\begin{equation}
\label{eqn:def phg} 
 u\sim \sum_{w,k} x^w (\log x)^k u_{w,k}\, ,\quad x\to 0 
\end{equation}
where all $u_{w,k}$ are smooth up to $x=0$ and the sum runs over a discrete set of $w\in\CC$ with $\Re w\to\infty$ and $k\leq N_w$ for each $w$, for some $N_w\in\NN_0$.  The pairs $(w,k)$ that arise in 
the expansions of harmonic forms can be derived from the set $-\imspec (D_V)$. Here smoothness of $u_{w,k}$ at $x=0$ is to be understood as smoothness of the coefficient functions when writing the form in terms of $\frac{dx}x, dy_j, x^a dz_k$ where the $y_j$ are base coordinates and the $z_k$ are fibre coordinates. This may be stated as smoothness when considering $u_{w,k}$ as section of a rescaled bundle $\Lambda \cphiT^*M$, as explained in Section \ref{sec:laplacian}.

Our first main result is a general parametrix construction, which is stated as Theorem \ref{thm:right split param}. Using this parametrix we deduce the following results.
\begin{theorem}
\label{Thm1}
Let $M$ be a $\phi$-manifold endowed with a $\phi$-cusp metric, $g$. 
The Gauss-Bonnet operator, $D_M=d+d^*$, is a Fredholm operator
\[
D_M: x^{\gamma+a} H^1_\split(M,\Lambda T^*M,\dvol_b) \rightarrow x^{\gamma}L^2(M,\Lambda T^*M,\dvol_b)
\]
for every $\gamma \notin -\imspec (D_V)$.

If  $D_Mu = 0$ for $u \in L^2(M,\Lambda T^*M,\dvol_b)$, then $u$ is polyhomogeneous, and in the expansion \eqref{eqn:def phg} we have $\Re w > 0$ for all $w$ and all terms $u_{w,k}$ with $\Re w \leq a$ are fibre harmonic.

\end{theorem}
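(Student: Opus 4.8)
The plan is to deduce both assertions from the general parametrix of Theorem~\ref{thm:right split param}. First I would record the Fredholm statement. By that theorem, for $\gamma\notin-\imspec(D_V)$ the operator $D_M$ admits a right parametrix $Q_r$ and a left parametrix $Q_l$ in the combined $b$--$\phi$ calculus with
\[
D_M Q_r = \Id - R_r,\qquad Q_l D_M = \Id - R_l,
\]
where $Q_r,Q_l$ are bounded from $x^{\gamma}L^2(M,\Lambda T^*M,\dvol_b)$ to $x^{\gamma+a}H^1_\split(M,\Lambda T^*M,\dvol_b)$ and $R_r,R_l$ are residual, in particular compact on $x^{\gamma}L^2$. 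The restriction $\gamma\notin-\imspec(D_V)$ is exactly the condition under which $D_V$ --- the elliptic $b$-operator on $V=[0,\eps)\times B$ modelling the action of $D_M$ on fibre-harmonic forms --- is invertible between the relevant weighted $b$-Sobolev spaces, hence the condition under which the normal problems solved in the parametrix construction are solvable modulo residual remainders. Standard functional analysis then shows $D_M:x^{\gamma+a}H^1_\split\to x^{\gamma}L^2$ is Fredholm. I expect this step to be bookkeeping once Theorem~\ref{thm:right split param} and the mapping properties of the calculus on the split Sobolev spaces are in hand; the only point requiring care is matching the weight shift by $x^a$ with the definition of $H^1_\split$.

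Next, the regularity. Given $u\in L^2(M,\Lambda T^*M,\dvol_b)$ with $D_M u=0$, applying the left parametrix gives $u = Q_l D_M u + R_l u = R_l u$. Since the Schwartz kernel of $R_l$ is polyhomogeneous on the relevant blown-up double space and $R_l$ improves decay, a bootstrap $u = R_l u = R_l^2 u = \cdots$ shows that $u$ is polyhomogeneous, with the pairs $(w,k)$ occurring in \eqref{eqn:def phg} controlled by the index sets produced in the parametrix construction, i.e. by $-\imspec(D_V)$ on the fibre-harmonic part and by the totally elliptic $\phi$-part on the fibre-perpendicular part. The condition $\Re w>0$ is then immediate: writing forms in the frame $\tfrac{dx}{x},\ dy_j,\ x^{a}dz_k$ (a frame for $\Lambda\cphiT^*M$), which is uniformly bounded above and below with respect to $g$, one has $|x^{w}(\log x)^k u_{w,k}|_g \asymp x^{\Re w}|\log x|^k$ near $x=0$, so square-integrability against $\dvol_b\asymp \tfrac{dx}{x}\,dy\,dz$ forces $\Re w>0$ for every term.

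Finally, the fibre-harmonicity of all terms with $\Re w\le a$. Decompose $u = u_0 + u_\perp$ into its fibre-harmonic and fibre-perpendicular parts; both are $L^2$ and polyhomogeneous, so by the previous step all their exponents satisfy $\Re w>0$. Let $c_0>0$ be the smallest exponent occurring in $u_0$. Projecting $D_M u=0$ onto the fibre-perpendicular subspace yields $D_{11}u_\perp = -D_{10}u_0$, where $D_{11}$ (the fibre-perpendicular to fibre-perpendicular block) is, after rescaling by $x^{-a}$, a totally elliptic element of the $\phi$-calculus --- its leading part is $x^{-a}$ times the fibrewise Gauss--Bonnet operator, invertible on the orthogonal complement of the fibre-harmonic forms by the fibrewise spectral gap --- while $D_{10}$ carries no negative powers of $x$, since the fibrewise differential and codifferential annihilate fibre-harmonic forms, so $D_{10}u_0 = O(x^{c_0})$. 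Total ellipticity of $D_{11}$ gives $\phi$-elliptic regularity: the homogeneous solutions of $D_{11}v=0$ vanish to infinite order at $x=0$, and inverting the leading part shows that every exponent of $u_\perp$ is at least $a+c_0>a$; iterating inside the polyhomogeneous expansion (each correction only improves the order) confirms that $u_\perp$ has no term with $\Re w\le a$. Hence every term of $u$ with $\Re w\le a$ belongs to $u_0$, i.e. is fibre harmonic. I expect this last step to be the main obstacle: it requires separating cleanly the $b$-behavior of the fibre-harmonic part from the totally elliptic $\phi$-behavior of the fibre-perpendicular part and tracking how the index sets propagate through the coupled system --- precisely the phenomenon the split Sobolev spaces are designed to encode.
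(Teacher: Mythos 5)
Your proposal is correct in substance and, for the Fredholm and basic polyhomogeneity claims, follows the paper's route: write $D_M=x^{-a}P$ with $P=x^aD_M$ a $\Pi$-split operator of order $1$, apply Theorem \ref{thm:right split param} at weight $\alpha=\gamma+a$ (so that $\alpha-a\notin-\imspec(P_{00})=-\imspec(D_V)$ is exactly $\gamma\notin-\imspec(D_V)$), and get polyhomogeneity from $u=R_{l}u$ (one application suffices; the bootstrap is unnecessary). The genuine divergence is the claim that terms with $\Re w\le a$ are fibre harmonic: the paper reads this off directly from the weighting of the left remainder, $R_l\in(x^{-am}\Pi+\Piperp)\Psi_\phi^{-\infty,\alpha}x^\infty$, so $\Piperp u=\Piperp R_l u\in x^{a}\calA^{K}_{\phg}$ with $K>0$ (Theorem \ref{thm:regularity}), whereas you re-derive it by projecting $D_Mu=0$ onto $\calC$ and inverting $N(P_{11})$. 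Your route works, but it requires a left parametrix for $P_{11}$ with $x^\infty$ remainder and polyhomogeneous mapping properties in the \emph{extended} calculus (Proposition \ref{lem: extended phi-calc} plus Step 1 of the construction), i.e.\ you rebuild a piece of machinery whose output is already encoded in $R_l$; the paper's formulation buys this for free. Also, two points you call bookkeeping are where the paper's Theorems \ref{thm:fredholm} and \ref{thm:regularity} do real work: for Fredholmness the left remainder must be compact on the domain $x^{\gamma+a}H^1_\split$ (not on $x^\gamma L^2$), which needs the $(x^{-am}\Pi+\Piperp)$ structure and a splitting into b- and $\phi$-parts since $H^1_\split$ is neither a b- nor a $\phi$-space; and since $u$ is only in $L^2$, the identity $Q_lD_M=\Id-R_l$ must be justified on $L^2$, which the paper does by using the parametrix at the shifted weight $\alpha+am$ and its boundedness $H^{-1}_\split\to L^2$.
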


In the next theorem we refer to operators $D_B$ and $\Pi$. The operator $D_B$ is a kind of Gauss-Bonnet operator on $B$ with coefficients in all forms on $F$, see \eqref{eqn:DU DB DF},  and $\Pi$ is the fibrewise orthogonal projection to the fibre-harmonic forms. 
\begin{theorem}
\label{Thm2}
Let $M$ be a $\phi$-manifold endowed with a $\phi$-cusp metric, $g$, and assume that $[D_B, \Pi]=0$.
Then the Hodge Laplacian, $\Delta_M=D_M^2$, is a Fredholm operator 
\[
\Delta_M: x^{\gamma+2a} H^2_\split(M,\Lambda T^*M,\dvol_b) \rightarrow x^{\gamma}L^2(M,\Lambda T^*M,\dvol_b).
\]
for every $\gamma \notin -\imspec (D_V)$.

If $\Delta_M u = 0$ for $u \in L^2(M,\Lambda T^*M,\dvol_b)$,  then $u$ is polyhomogeneous, and in the expansion \eqref{eqn:def phg} we have $\Re w > 0$ for all $w$ and all terms $u_{w,k}$ with $\Re w \leq  2a$ are fibre harmonic.

\end{theorem}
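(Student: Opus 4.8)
The plan is to deduce this theorem from the general right parametrix construction (Theorem \ref{thm:right split param}), essentially by specializing the parametrix statement to the operator $\Delta_M = D_M^2$ and then bootstrapping regularity. The Hodge Laplacian is, up to a scalar, a $\phi$-differential operator of order $2$; as noted in the introduction, it is not totally elliptic in the small $\phi$-calculus, so one works instead in the combined $b$-$\phi$ calculus. The first step is to verify that $\Delta_M$ satisfies the structural hypotheses that the general parametrix theorem requires: it is $\phi$-elliptic (its $\phi$-symbol is invertible away from the zero section), and the relevant normal operator at the boundary fibration has the correct block structure with respect to the splitting $\Lambda T^*M = \Pi(\Lambda T^*M) \oplus \Pi^\perp(\Lambda T^*M)$ into fibre-harmonic and fibre-perpendicular forms. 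This is exactly where the hypothesis $[D_B,\Pi]=0$ enters: it guarantees that the fibre-harmonic and fibre-perpendicular parts do not interact through the leading part of the operator in the ``slow'' direction, so that on the fibre-harmonic piece the model operator reduces to (a function of) the $b$-operator $D_V$ on $V=[0,\eps)\times B$, while on the fibre-perpendicular piece the fibre Laplacian $\Delta_F$ is bounded below by a positive constant and hence invertible. Without $[D_B,\Pi]=0$ there would be an off-diagonal first-order term coupling the two pieces, which is precisely the geometric obstruction mentioned in the introduction; this is the reason the hypothesis is imposed here rather than being derived.

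Given this, I would invoke Theorem \ref{thm:right split param} to produce a right parametrix $Q$ for $\Delta_M$ with a residual error $R$ that is compact (indeed smoothing with the appropriate decay) as a map on the weighted split Sobolev scale, provided the weight $\gamma$ avoids the indicial roots of the model $b$-operator, i.e.\ $\gamma\notin -\imspec(D_V)$. The mapping properties of $Q$ on the split Sobolev spaces — that $Q$ maps $x^\gamma L^2(M,\Lambda T^*M,\dvol_b)$ continuously into $x^{\gamma+2a}H^2_\split(M,\Lambda T^*M,\dvol_b)$ — are read off from the index sets and the orders of the factors in the $b$-$\phi$ calculus; the shift by $2a$ rather than by $a$ (as in the Dirac/Gauss-Bonnet case of Theorem \ref{Thm1}) reflects that $\Delta_M$ is second order and that each $\phi$-derivative in the fibre directions costs a factor $x^a$. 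Combining a right parametrix modulo compact errors with the analogous statement for the formal adjoint $\Delta_M^* = \Delta_M$ (the operator is formally self-adjoint, so a left parametrix is obtained symmetrically, or one quotes the two-sided version from Section \ref{sec:laplacian}) yields that $\Delta_M$ is Fredholm on the stated spaces for every $\gamma\notin-\imspec(D_V)$. I expect the main technical obstacle to be bookkeeping: carefully tracking the index sets at the various boundary hypersurfaces (the $b$-face and the $\phi$-face) and checking that composition of $Q$ with $\Delta_M$ lands in the residual ideal, together with verifying that the split Sobolev spaces of Section \ref{subsec:split sob} are exactly the spaces on which the normal operator inverts with no loss — this is where the $\Pi$ versus $\Pi^\perp$ distinction forces genuinely different weights on the two summands.

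For the regularity statement, suppose $\Delta_M u = 0$ with $u\in L^2(M,\Lambda T^*M,\dvol_b)$. Applying the parametrix gives $u = Q\Delta_M u + Ru = Ru$, and since $R$ is a smoothing operator in the combined calculus, its Schwartz kernel is polyhomogeneous on the appropriate blown-up double space; pushing forward, $u = Ru$ is polyhomogeneous on $M$ with an expansion of the form \eqref{eqn:def phg} whose exponents $(w,k)$ are controlled by the index set of $R$, which in turn is generated by $-\imspec(D_V)$ together with the positive powers of $x$ coming from the Taylor expansion of the operator and the $x^a$ scalings. The $L^2(\dvol_b)$ membership forces $\Re w > 0$ for every exponent appearing (any term with $\Re w\le 0$ would fail to be square-integrable against $\frac{dx}{x}$ near $x=0$). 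Finally, to see that the low-order terms $u_{w,k}$ with $\Re w\le 2a$ are fibre harmonic, I would feed the expansion back into the equation $\Delta_M u = 0$ and examine it order by order: on the fibre-perpendicular part $\Pi^\perp u$, the leading behaviour of $\Delta_M$ is $x^{-2a}$ times an invertible fibrewise operator (the positive fibre Laplacian, using $[D_B,\Pi]=0$ to kill the cross terms), so the equation recursively pins down $\Pi^\perp u_{w,k}$ to vanish for $\Re w \le 2a$; equivalently, the index set of $R$ restricted to the fibre-perpendicular block starts only at $\Re w > 2a$. This indicial analysis on each block is routine once the normal operator structure from the parametrix construction is in hand, so the genuine work is all front-loaded into verifying the hypotheses of Theorem \ref{thm:right split param} and its mapping properties.
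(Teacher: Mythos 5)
Your proposal is correct and follows essentially the same route as the paper: one checks that $[D_B,\Pi]=0$ makes $x^{2a}\Delta_M$ a $\Pi$-split operator (with the fibre-harmonic block's indicial data given by $D_V$, since $T_{00}=(P_{00})^2+O(x^{2a})$), invokes the split parametrix theorem to get Fredholmness on the weighted split Sobolev spaces, and obtains polyhomogeneity from $u=R_l u$ with $\Re w>0$ forced by $L^2(\dvol_b)$. The only cosmetic difference is your fibre-harmonicity argument: the paper reads the vanishing of the fibre-perpendicular part to order $x^{2a}$ directly off the extra $x^{am}$ factor in the $\Piperp$ block of the left remainder (your stated ``equivalent'' formulation), rather than substituting the expansion back into $\Delta_M u=0$ order by order, though that indicial variant also works.
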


This paper is organised as follows.  In Section \ref{sec:b+phi-calculi}, we review the general b-calculus approach to pseudodifferential operators and give definitions, parametrix and regularity theorems in the 
b and small-$\phi$-calculus settings.  These results are useful both for comparison to Theorems
\ref{Thm1} and \ref{Thm2}, and also because parts of them are used in the proofs of these theorems.
In Section \ref{sec:laplacian}, we review the geometry of fibrations and the form of the Gauss-Bonnet and Hodge
Laplace operators over $\phi$-manifolds with $\phi$-cusp metrics.  This section lays out some of the
critical properties of these operators which permit the approach taken in the proofs of Theorems
\ref{Thm1} and \ref{Thm2}. Here also the rescaled bundle $\cphiT^*M$ is introduced, which is needed to apply the $b$-calculus techniques and is used in the rest of the paper. It also explains where the condition $[D_B,\Pi]=0$ in Theorem \ref{Thm2} comes from.
In Section \ref{sec:ext phi calculus}, we consider how to lift integral kernels of elements of the $b$-calculus to the $\phi$-double space, and how the resulting operators combine with elements of the $\phi$-calculus.  
A larger space of operators, the ``extended" $\phi$-calculus, in which both $\phi$-operators 
and lifted $b$-operators live and may be combined, is defined.  It is also shown that the 
extended $\phi$-calculus has a meaningful boundary symbol, and that it has properties similar
to the original $\phi$-calculus that will be used in the construction of parametrices.  Section \ref{sec:parametrix} begins with a discussion of the properties we want from our final parametrix, then contains the statement and proof of the main (parametrix) theorem of this paper, Theorem \ref{thm:right split param}.  Section \ref{sec:proofs main thms} contains the definition of the spaces that arise
in the Fredholm results in Theorems \ref{Thm1} and \ref{Thm2}, and proofs of general Fredholm and regularity
theorems as corollaries of the parametrix theorem.  These theorems, when applied to the 
Gauss-Bonnet and Hodge Laplace operators, become Theorems \ref{Thm1} and \ref{Thm2}.
This paper is written in such a way that a reader interested primarily in applying the theorems, rather
than in the proof details, can skip the more technical sections 4 and 5.


\section{The b- and $\phi$-calculi}
\label{sec:b+phi-calculi}
In the b-calculus approach to geometric pseudodifferential 
operators, the geometry of a singular or noncompact manifold is encoded in a Lie algebra of vector
fields over a manifold with boundary that degenerate in a particular way at the boundary.
The original b-setting of Melrose, \cite{Me-aps}, dealt with a Riemannian manifold $(\interior{M}, g_b)$ 
which off of a compact set had an infinite cylindrical metric,
\[
g_b = dr^2 + g_Y, \quad r \in [0,\infty),
\]
where $(Y,g_Y)$ was a smooth compact Riemannian manifold.  Certain perturbations of such product type metric are also considered. Under the change of coordinates $x=e^{-r}$,
the metric can be rewritten in the form:
\[
g_b = \frac{dx^2}{x^2} + g_Y, \quad x \in (0, 1].
\]
We compactify $\interior{M}$ by adding a copy of $Y$ at $x=0$, thus obtaining a compact manifold with boundary $M$ whose interior is $\interior{M}$. The singular behaviour of the metric at the boundary $x=0$ induces 
degenerations in the associated geometric differential operators at $x=0$. These operators are defined
on $M$ including its boundary, and are expressible, in a smooth and non-degenerate way, in terms of vector fields tangent to the boundary. In terms of local coordinates $\{x,{\bf y}\}=\{x,y_1, \ldots, y_n\}$ near a  boundary point these vector fields are spanned by
\begin{equation}\label{eq:bvf}
x\del_x, \del_{y_1}, \ldots, \del_{y_n}\,.
\end{equation}
More precisely, the geometric operators are b-differential operators. The set of b-differential operators of order $m$,
denoted $\mbox{Diff}^m_b(M)$, consists of order $m$ differential operators on 
$M$ which have smooth coefficients in the interior and in coordinates near $x=0$ can be written in the form
\begin{equation}
\label{eq:bdiffop}
P=\sum_{j+|K| \leq m} a_{j, K}(x,{\bf y}) (x\calD_x)^j (\calD_{\bf y})^K,
\end{equation}
where the functions $a_{j,K}$ are smooth up to the boundary $x=0$, 
$K$ is a multi-index, $\calD_x=\frac1i \frac\partial{\partial x}$ and
$(\calD_{\bf y})^{K} = \calD_{y_1}^{K_1} \cdots \calD_{y_n}^{K_n}$.
Similarly, for a vector bundle $E$ over $M$ we say $P\in\Diff_b^m(M,E)$ if $P$ has, in local coordinates and with respect to a local bundle trivialization $E\cong \RR^e$, the form \eqref{eq:bdiffop} where the $a_{j,K}(x,\bfy)$ are homomorphisms $\RR^e\to \RR^e$. Then $P$ acts on sections of $E$.
\medskip

In the setting of this paper, the degeneration of the $\phi$-cusp metric is encoded by smooth vector fields on $M$ that near any boundary point
may be written as a smooth linear combination of vector fields of the form
\begin{equation}\label{degvf}
x^{1+a}\del_x, x^{a}\del_{y_1}, \ldots, x^{a}\del_{y_b}, \del_{z_1}, \ldots, \del_{z_f},
\end{equation}
where the boundary is $x=0$, the coordinates $y_i$ are lifted from the base $B$ of the fibration $Y\stackrel{\phi}{\to}B$  and the $z_j$ can be thought of 
as local fiber coordinates.  
Thus, a $\phi$-differential operator of order $m$ is an $m$-th order differential operator on 
$M$ which near any boundary point can be written in the form:
\begin{equation}
\label{eq:phidiffop}
P=\sum_{j+|K|+|L| \leq m} a_{j, K,L}(x,{\bf y}, {\bf z}) (x^{1+a}\calD_x)^j
(x^{a}\calD_ {\bf y})^K (\calD_ {\bf z})^L,
\end{equation}
where the functions $a_{j, K,L}(x,{\bf y}, {\bf z})$ are smooth up to the boundary $x=0$.
We denote the algebra of $\phi$-differential operators over $M$ by $\mbox{Diff}^*_\phi(M)$ or $\Diff^*_\phi(M,E)$.
Then if $A$ is a geometric differential operator of order $m$ over $M$ associated to
the metric $g$, we have
\[
A = x^{-ma}P, \qquad P \in \mbox{Diff}^m_\phi(M,E)
\]
where $E$ is typically a variant of the form bundle $\Lambda T^*M$, see Section \ref{sec:laplacian}. 
Here and in the sequel $x\in\Cinfty(M)$ always denotes a boundary defining function for $M$; locally near any boundary point it is a coordinate, and in the interior of $M$ it is positive. 
It is quite reasonable to ask why we have put the factor of $x^{-ma}$ out front, rather than
building up our differential operators from vector fields of the forms $x\del_{x},$ $\del_{y_i}$
and $x^{-a} \del_{z_j}$.  The reason is that those vector fields do not span a Lie
algebra over smooth functions on $M$, whereas the vector fields in equation \eqref{degvf} do.

Differential operators, and the broader class of pseudodifferential operators associated
to a Lie algebra of vector fields, are identified with their Schwartz kernels. These are distributions on the double space, $M^2$, that are singular at the diagonal. In order to separate this singularity from the degeneration at the boundary it is useful to lift these kernels  to a particular model space obtained by a blow-up of $M^2$, where their structure can be described explicitly. The space $M^2$ and these model spaces are manifolds with corners.  The space $M^2$ has two boundary hypersurfaces, $\lf=\partial M\times M$ and $\rf=M\times \partial M$. In the b-calculus setting, the model space $M^2_b$ has 
three boundary hypersurfaces, $\lf,\rf$ and  $\bff$.  In the $\phi$ setting, the model space $M^2_\phi$ 
has four boundary hypersurfaces: $\lf,\rf,\bff$ and $\ff$. The diagonal of $M^2$ lifts to $M^2_b$ and $M^2_\phi$.
Operators 
in the full calculi in both settings are defined by distributions on the appropriate model space 
that are conormal to the lifted diagonal and have polyhomogeneous expansions at each boundary face.  
The degree of the conormality of a kernel at the lifted diagonal determines the order of its associated
operator, as in the setting of standard pseudodifferential operators.  
Its expansions at the various boundary faces determine how it maps between
appropriate weighted Sobolev spaces.  

In order to state the mapping results for elements of the full
b and $\phi$ calculi that we will need in our parametrix construction, we need first to define weighted b and $\phi$ Sobolev spaces, as well as  spaces of polyhomogeneous and conormal distributions.  Then we will move on to a summary of the necessary results from the b and $\phi$-calculi.
In the interests of space, we will give
only heuristic explanations, ones we find the most useful in applications, and refer the reader to the references
\cite{Me-aps}, \cite{GH1} and \cite{Gbbc} for details of exactly how and why these definitions and theorems work. 
Note that the notation we adopt here conforms to that in \cite{Me-aps} but not quite to that in \cite{GH1}. For example, the space denoted $\Psi_b^{m,\calI}(M,\Omega_b^{1/2})$ below is denoted 
$\Psi_b^{m,\calI}(M)$ in \cite{GH1}.

\subsection{Sobolev spaces and polyhomogeneous spaces}
Throughout the paper we fix a b-volume
form $\dvol_b$ on $\interior M$. The index b indicates that we assume that in local coordinates near a boundary point, we can write
$$ \dvol_b = a(x,\bfy) \frac{dx}x dy_1,\dots,dy_n $$
where $a$ is smooth and positive up to the boundary $x=0$.
While this b-behavior near the boundary is important, the particular choice of $\dvol_b$ is inessential. 
We define Sobolev spaces always with respect to this volume form.

Elliptic b-differential operators map as Fredholm
operators between appropriate b-Sobolev spaces, defined as follows.  
Let $E$ be a vector bundle over $M$. 
Choose a cutoff function $\chi$ which is equal to $1$ near $x=0$ and equal to $0$ for $x>\eps$.  Then we define for $m\in\NN_0$, $s\in\RR$
\begin{equation}
\label{eq:bSobolev}
x^sH^m_b(M,E,\dvol_b) = \{ x^s u\mid u \in H^m_{loc}(\interior M,E), (x\calD_x)^j (\calD_\bfy)^K \chi u \in L^2(M, E,\dvol_b)\,\, \forall j+|K| \leq m \}.
\end{equation}
These spaces are independent of the choice of $\dvol_b$ and cutoff $\chi$, local coordinates and local trivializations of $E$, and metric on $E$, and can be metrized using invertible
b-pseudodifferential operators, but we will not go into these details here.  
This definition can be extended to $m\in\RR$ by the usual arguments of duality and interpolation. For most applications $m\in\NN_0$ is sufficient, but we state theorems in this greater generality.
Similarly, the appropriate Sobolev spaces for $\phi$-operators are expressed in terms of the vector fields in equation \eqref{degvf}, so they are defined as follows:
\begin{equation}
\label{eq:phisob}
x^sH^m_{\phi}(M,E,\dvol_b)= \{x^su \mid u \in H^m_{loc}(\interior M,E), \qquad \qquad \qquad \qquad
\end{equation}
\[
\qquad \qquad \qquad (x^{1+a}\calD_x)^j (x^a \calD_\bfy)^K (\calD_\bfz)^L \chi u \in L^2(M,E, \dvol_b)\,\, \forall j+|K|+|L|  \leq m \}.
\]
Again, these can be metrized using invertible $\phi$-differential operators, as is carried out in 
\cite{GH1}.

We next need to define polyhomogeneous sections, which are
sections that have asymptotic expansions in terms of the boundary defining function $x$ near $\del M$.
The expansions near the boundary for these sections involve powers of $x$ and powers of $\log x$.
Thus the type of these expansions can be described by index sets $I$, which are sets of pairs
$(z,k) \in \mathbb{C} \times \mathbb{N}_0$, where the first term describes permitted powers of $x$
and the second describes permitted powers of $\log x$. Index sets are required to satisfy $(z,k)\in I, l\leq k\Rightarrow (z,l)\in I$ and $(z+1,k)\in I$ (this guarantees that the condition below is independent of the choice of boundary defining function $x$), and that for any $r$ there is only be a finite number of $(z,k)\in I$ satisfying $\Re z < r$.
Polyhomogeneous
sections over $M$ are defined as follows.
\begin{definition}
\label{def:phg}
Let $I$ be an index set and $E$ a vector bundle over $M$, a manifold with boundary.
A smooth section of $E$ over $\interior{M}$ is said to belong to $\mathcal{A}^I_{\rm{phg}}(M,E)
$ if  for
each $(z,k) \in I$ there exists a section $u_{(z,k)}$ of $E$, smooth up to the boundary of $M$,  such that for all $N$, the difference
$$
u-\sum_{\substack{(z,k) \in I \\ \Re(z) \leq N}} x^z (\log x)^k u_{(z,k)}
$$
vanishes to $N$th order at $\del M$.

For an index set $I$, we say $I>\alpha$ if  $(z,k)\in I$ implies $\Re z>\alpha$, and $I \geq \alpha$ if  $(z,k)\in I$ implies $\Re z \geq \alpha$, and $k=0$ for $\Re z = \alpha$.

\end{definition}
When operators are composed
or applied to sections, their index sets combine using two operations.  The first
is simple set addition, indicated by $+$.  The second is an extended union:
$$
I \U J = I \cup J \cup \{(z,k) \mid k=l_1 + l_2+1 \mbox{ where there exist } (z,l_1) \in I, (z, l_2) \in J \}.
$$
For the index set $(r+\NN_0)\times \{0\}$, where $r\in\RR$, we use the short notation $r$. Thus, $I+r$ is the index set $I$ shifted by $r$.  Note that for the index set $\emptyset$, we have
$I \U \emptyset = I$, $I + \emptyset = \emptyset$ and $\emptyset + r = \emptyset$.


There is an analogous definition of polyhomogeneity for sections on manifolds with corners, for example
the model double spaces $M^2_b$, $M^2_\phi$. In this case an {\em index family} $\calI$ has to be specified, which consists of an index set $\calI_f$ for each boundary hypersurface $f$ of the space.
Also, the definition of polyhomogeneity extends to distributions on $\interior M$ which are conormal with respect to a submanifold of $M$ that meets the boundary in a product type fashion. The main example of such a submanifold is the lifted diagonal as a submanifold of $M^2_b$ or $M^2_\phi$.
  Rather than go into the detail for this here, we refer the reader to 
\cite{Me-aps}, \cite{GH1}  and \cite{Gbbc}.

\subsection{Mapping and regularity in the b-calculus}

As mentioned above, Schwartz kernels of b-differential and b-pseudodifferential operators are seen
as living not on the double space, $M^2$, but rather on a blow-up of this space, called $M^2_b$. When identifying an operator with its Schwartz kernel there is integration, hence a choice of measure or density involved, unless one considers the operator as acting on densities, or takes the kernel to be a density. Another possibility, which is more symmetric, is to think of both as half-densities. It is important to deal with this carefully because of the various  singular coordinate changes involved in the blow-up of $M^2$ to $M^2_b$ and $M^2_\phi$. 
We will  go into this only shortly, since it is needed to define b- and $\phi$-pseudodifferential operators properly, but in the end the half-densities will only act behind the scenes and not be visible.

Recall that a density on an oriented manifold $M$ is a smooth top degree form, that is, a smooth section of the  trivial 1-dimensional vector bundle  $\Omega=\Lambda^{\dim M}T^*M$.  For us more important is the b-density bundle over an oriented manifold with boundary, $M$, denoted $\Omega_b$. The volume form $\dvol_b$ is a smooth non-vanishing section of $\Omega_b$. We can form the square root of this bundle, denoted $\Omega_b^{1/2}$.
Near a boundary point its sections are of the form   $u(x,\bfy) \left(\frac{dx}x\,dy_1\dots dy_n\right)^{1/2}$ in terms of (oriented) local coordinates $\{x,\bfy\}$, where the function $u$ is smooth up to the boundary. Similarly, one has the b-half-density bundle on a manifold with corners.

We now define the space of b-pseudodifferential operators on a manifold with boundary, $M$. Recall that $M^2_b$ is obtained by blowing up the corner $\partial M\times \partial M$ of $M^2$. The front face of the blow-up is called $\bff$, so $M^2_b$ has three boundary hypersurfaces, $\lf,\rf$ and $\bff$. Let $\mathcal{I}=(\calI_\lf,\calI_\rf,\calI_\bff)$  be an index family for $M^2_b$. 
For $m\in\RR$ we define the full b-calculus
$\Psi_b^{m,\mathcal{I}}(M,\Omega_b^{1/2})$ as the set of $\Omega_b^{1/2}$-valued distributions on $M^2_b$ which are conormal with respect to the lifted diagonal of order $m$ and polyhomogeneous at the boundary with index family $\calI$. These operators act on b-half-densities on $M$.  To extend this to operators acting on sections of a vector bundle $E$ over $M$ one proceeds in two steps. First, by tensoring kernels with sections of the $\End(E)$ bundle on $M^2$ lifted to $M^2_b$ we obtain operators acting on 
sections of $\Omega_b^{1/2}\otimes E$. Then we replace $E$ by $E\otimes \Omega_b^{-1/2}$ in this construction and obtain operators acting on sections of $E$. 

This space of kernels or operators is denoted by $\Psi_b^{m,\calI}(M,E)$. For $\mathcal{I}=(\emptyset, \emptyset, 0)$, i.e., kernels
vanishing to infinite order at $\lf$ and $\rf$ and smooth transversally to $\bff$, we simply write $\Psi_b^{m}(M,E)$. This is called the small b-calculus.
A simple calculation shows that $\Diff_b^m(M,E)\subset \Psi_b^{m}(M,E)$. The fact that here the index set at $\bff$ is simply $0$ results from and justifies the use of  b-half-densities instead of regular densities.

 The expansions at $\rf$ and $\bff$ determine the domain of an  operator
 in $\Psi_b^{m,\calI}(M,E)$,
and the expansions at $\lf$ and $\bff$ determine its range. This is made precise in the following theorems:

\begin{theorem}[Boundedness and compactness for b-operators]
\label{thm:bdd cpct b}
Let $M$ be a compact manifold with boundary and let $E$ be a vector bundle over $M$. Let $P\in \Psi_b^{m,\calI}(M,E)$ and $\alpha,\beta\in\RR$, $k\in\RR$.
\begin{enumerate}
 \item
  If 
\begin{equation}
 \label{eqn:condition bddness b}
 \calI_\lf>\beta,\ \calI_\rf >-\alpha,\ \calI_\bff \geq \beta-\alpha,
  \end{equation}
then $P$ is bounded as an operator
\begin{equation}\label{eqn:sobolev map b}
P: x^\alpha H^{k+m}_b(M,E,\dvol_b)  \to x^\beta H^{k}_b (M, E,\dvol_b). 
\end{equation}
\item
If  $m<0$ and strict inequality holds everywhere in \eqref{eqn:condition bddness b} then $P$, acting as in  \eqref{eqn:sobolev map b}, is compact.
\end{enumerate}
\end{theorem}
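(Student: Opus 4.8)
The plan is to reduce the boundedness statement to the model case of the half-line by localization, and then to use the explicit description of b-pseudodifferential kernels on $M^2_b$ together with a Schur-type test. First I would observe that away from the boundary $P$ is an ordinary pseudodifferential operator of order $m$, so by the standard theory it is bounded $H^{k+m}_{loc}\to H^k_{loc}$ in the interior, and moreover (by interior elliptic regularity of the order reduction, or simply by composing with elliptic b-operators that metrize the spaces) it suffices to treat the case $k=0$: indeed, picking an invertible $Q_s\in\Psi^s_b(M,E)$ for each $s$, one has $P=Q_\beta^{-1}(Q_\beta P Q_\alpha^{-1})Q_\alpha$, and $Q_\beta P Q_\alpha^{-1}\in\Psi^{m}_b(M,E)$ has shifted index sets; so boundedness of $P$ as in \eqref{eqn:sobolev map b} for general $k$ follows from boundedness of a conjugated operator in the case $k=0$. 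Next I would further reduce $m$: writing $P=P\Lambda^{-m}\Lambda^m$ with $\Lambda\in\Psi^1_b(M,E)$ elliptic invertible, it is enough to prove $L^2$-boundedness for operators of order $\le 0$, and in fact of order $0$, since lower order only helps.

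The core step is then the case $m=0$, where the Schwartz kernel, lifted to $M^2_b$ and trivialized as a b-half-density, is conormal of order $0$ along the lifted diagonal and polyhomogeneous with index family $\calI$ at $\lf,\rf,\bff$. I would split the kernel into a piece supported near the lifted diagonal and a piece supported away from it. The diagonal piece is handled by the Calderón--Vaillancourt theorem in the interior, and near $\bff$ one uses projective coordinates $s=x/x'$, $\bfy$, $\bfy'$ (or their reciprocals) in which $\bff$ becomes a boundary face transverse to the diagonal; the b-half-density trivialization is arranged precisely so that in these coordinates the kernel has a uniform conormal bound, and the operator becomes a pseudodifferential operator in $\bfy$ with values in convolution operators in $\log s$ — bounded on $L^2(\frac{ds}{s}\,d\bfy)$ by Plancherel in the $\log s$ variable once the index condition $\calI_\bff\ge\beta-\alpha$ is used to absorb the weight $x^{\beta-\alpha}=s^{\beta-\alpha}$ (times a bounded factor) into a shift that keeps the Mellin symbol bounded. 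The off-diagonal piece is polyhomogeneous and vanishing to the orders prescribed by $\calI_\lf>\beta$ at $\lf$, $\calI_\rf>-\alpha$ at $\rf$, $\calI_\bff\ge\beta-\alpha$ at $\bff$; writing out the $L^2(x^{2\beta}\dvol_b)\to L^2$ mapping as an integral operator with kernel $x^{-\beta}K(x,\bfy,x',\bfy')x'^{\alpha}$ against $\dvol_b\,\dvol_b'$, I would verify the hypotheses of Schur's lemma, i.e.\ that
\[
\int x^{-\beta}\,|K|\,x'^{\alpha}\,\frac{dx'}{x'}\,d\bfy'\le C\quad\text{and}\quad \int x^{-\beta}\,|K|\,x'^{\alpha}\,\frac{dx}{x}\,d\bfy\le C,
\]
uniformly; the strict inequalities $\calI_\lf>\beta$, $\calI_\rf>-\alpha$ make the $x\to0$ and $x'\to0$ integrals converge, and $\calI_\bff\ge\beta-\alpha$ (with $k=0$ allowed at equality) makes the integral over the front face region converge, possibly with a logarithmic factor that is still integrable because equality at $\bff$ is permitted to have $k=0$ only.

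For part (2), suppose $m<0$ and all inequalities in \eqref{eqn:condition bddness b} are strict. Then $P$ factors as $P=P\cdot\Lambda^{-\epsilon}\cdot\Lambda^{\epsilon}$ for small $\epsilon>0$ with $\Lambda^{\epsilon}$ elliptic of positive order; by part (1), $P\Lambda^{-\epsilon}\in\Psi^{m+\epsilon}_b(M,E)$ with the \emph{same} index family (order reduction in $\Psi_b$ does not change the boundary index sets) is still bounded, and composing with the compact inclusion coming from the gain of $\epsilon$ derivatives is not yet enough; instead I would argue directly. The strict inequalities allow us to find weights $\alpha'<\alpha$, $\beta'>\beta$ still satisfying \eqref{eqn:condition bddness b} with the same (non-strict) form, so that $P$ factors through
\[
x^\alpha H^{m}_b\hookrightarrow x^{\alpha'}H^{m}_b \xrightarrow{\ P\ } x^{\beta'}H^{0}_b\hookrightarrow x^{\beta}H^{-?}_b,
\]
and one uses that the inclusion $x^{\alpha}H^{s}_b\hookrightarrow x^{\alpha'}H^{s'}_b$ is compact whenever $\alpha>\alpha'$ and $s>s'$ — a statement proved by the usual Rellich argument (cut off near the boundary, where the weight gain gives decay, and apply the interior Rellich theorem on the compact complement), combined with $m<0$ to supply the needed loss $s>s'$. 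Chaining the bounded map from part (1) with this compact inclusion yields compactness of $P$.

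The main obstacle I expect is the front-face ($\bff$) analysis in the boundedness proof: unlike the $\lf,\rf$ contributions, which are handled by a soft Schur estimate using strict inequalities, the $\bff$ behavior is borderline (equality $\calI_\bff\ge\beta-\alpha$ is allowed) and the operator there is genuinely a Mellin-convolution–valued pseudodifferential operator, so one must invoke Plancherel in the Mellin variable rather than absolute-value estimates, and be careful that the b-half-density normalization makes the relevant symbol bounded on the critical line. This is exactly the point where the choice of b-half-densities (rather than ordinary densities) and the condition that the index set at $\bff$ of the \emph{small} calculus is $0$ enter decisively.
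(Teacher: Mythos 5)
The paper does not actually prove Theorem \ref{thm:bdd cpct b}; it quotes it from the b-calculus literature (\cite{Me-aps}, \cite{Gbbc}). Your part (1) follows the standard route found there — reduction to $k=0$, $m=0$ by order reductions and conjugation by the weights, splitting the kernel on $M^2_b$ into a near-diagonal piece (uniform H\"ormander/Calder\'on--Vaillancourt, resp.\ Mellin--Plancherel, argument in projective coordinates near $\bff$, where the b-half-density normalization matters) and an off-diagonal polyhomogeneous piece handled by Schur's test, with $\calI_\lf>\beta$, $\calI_\rf>-\alpha$ giving convergence and $\calI_\bff\geq\beta-\alpha$ (no logs at equality) giving uniform boundedness near $\bff$ — and is fine in outline. (Small slip: $x^{-\beta}(x')^{\alpha}=s^{-\beta}(x')^{\alpha-\beta}$, and it is the vanishing of the kernel to order $\geq\beta-\alpha$ in a defining function of $\bff$, comparable to $x'$ in that region, that cancels the $(x')^{\alpha-\beta}$; this is what you meant.)

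Part (2) as you argue it has a genuine gap, and it is worth being precise about why. In your chain
\[
x^\alpha H^{k+m}_b\hookrightarrow x^{\alpha'}H^{k+m}_b \xrightarrow{\ P\ } x^{\beta'}H^{k}_b\hookrightarrow x^\beta H^{k-\delta}_b,
\]
the only link that can be compact is the last inclusion, and it is compact only if $\delta>0$; the composite is then compact into $x^\beta H^{k-\delta}_b$, which does \emph{not} imply compactness into $x^\beta H^{k}_b$ (compactness of $\iota\circ P$ for a non-compact continuous embedding $\iota$ is strictly weaker than compactness of $P$). Nor can the order loss be moved to the front: if the first inclusion drops to $x^{\alpha'}H^{k+m-\delta}_b$ so as to be compact, then $P$, being of order exactly $m$, maps that space only into $x^{\beta'}H^{k-\delta}_b$, again missing the target. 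The obstruction is not in your write-up but in the statement itself: the mapping \eqref{eqn:sobolev map b} already uses up the entire order gain $|m|$, so for finite $m<0$ part (2) as literally stated is false. For example, take $P=\chi A\chi$ with $A$ elliptic of order $m<0$ and $\chi$ an interior cutoff; all index sets are empty, so the strict form of \eqref{eqn:condition bddness b} holds, yet a parametrix argument shows that compactness of $P:H^{k+m}\to H^k$ would force multiplication by a cutoff to be compact on $H^{k+m}$, which is false. The correct (and standard) statement is compactness whenever there is a strict gain in \emph{both} weight and order, e.g.\ $P:x^\alpha H^{k}_b\to x^\beta H^{k}_b$, or $x^\alpha H^{k+m}_b\to x^\beta H^{k-\delta}_b$ for $\delta>0$ — and that is exactly what your chain proves, since your compact-inclusion lemma ($\alpha>\alpha'$, $s>s'$, proved by cutoff plus interior Rellich) is correct. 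In the present paper the discrepancy is harmless: part (2) is only invoked for remainders of order $-\infty$ (e.g.\ $R_r$, $R_l$ in the proof of Theorem \ref{thm:fredholm}), where one can afford to give up Sobolev order and your argument closes without change; but as a proof of the theorem as stated, the final step does not (and cannot) deliver the asserted target space, and the statement should be adjusted accordingly.
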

\noindent
Note that for the index set $\emptyset$, $\inf(\emptyset) = \infty$, so in particular, b-differential
operators are bounded between equal weighted b-Sobolev spaces of appropriate orders.
Next we can state the mapping property with respect to polyhomogeneous sections. Again the use of b-half densities makes the index sets combine in a very simple manner:
\begin{theorem}[Mapping of polyhomogeneous sections for b-operators]
\label{th:b-phgreg}
Let $u \in \calA^I_{\rm{phg}}(M,E)$ and $P\in \Psi_b^{m,\calJ}(M, E)$.  Then
if $\calJ_\rf + I> 0$, we can define $Pu$ and we get that
$Pu \in \calA^K_{\rm{phg}}(M,E)$, where
$K = \calJ_\lf \U (\calJ_\bff+I)$.
\end{theorem}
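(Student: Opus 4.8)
The strategy is to realise $Pu$ as a pushforward of the product of the Schwartz kernel of $P$ with the pullback of $u$, and then to invoke Melrose's pullback and pushforward theorems for b-maps (for which we refer to \cite{Me-aps}, \cite{Gbbc}). Write $\pi_L,\pi_R\colon M^2_b\to M$ for the two b-fibrations obtained by composing the projections $M^2\to M$ with the blow-down $M^2_b\to M^2$, with $\pi_R$ the projection onto the ``input'' factor, so that $Pu=(\pi_L)_*\bigl(K_P\cdot\pi_R^*u\bigr)$, where $K_P$ denotes the kernel of $P$. Recall the incidence relations on $M^2_b$: $\pi_L$ maps $\lf$ and $\bff$ onto $\partial M$ and maps $\rf$ onto $M$, while $\pi_R$ maps $\rf$ and $\bff$ onto $\partial M$ and maps $\lf$ onto $M$; the lifted diagonal $\Delta_b$ meets $\bff$ transversally, is disjoint from $\lf$ and $\rf$, and $\pi_L$ restricts to a diffeomorphism $\Delta_b\xrightarrow{\sim}M$. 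Throughout we work with b-half-densities, as in the definition of $\Psi_b^{m,\calJ}(M,E)$; this is exactly what makes all index sets combine without spurious shifts. The bundle $E$ only contributes $\End(E)$-valued coefficients that are carried along inertly once $E$ is trivialised locally, so we suppress it.

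First I would pull $u$ back. Since $u\in\calA^I_\phg(M,E)$ is smooth in $\Mint$ and $\pi_R$ is a b-fibration, the pullback theorem shows that $\pi_R^*u$ is polyhomogeneous on $M^2_b$ with index set $I$ at each of $\rf$ and $\bff$ and index set $0$ (i.e.\ smooth) at $\lf$, the last because $\pi_R$ maps the interior of $\lf$ into $\Mint$. Multiplying by $K_P$, which is conormal of order $m$ along $\Delta_b$ and polyhomogeneous at the boundary with index family $\calJ$, and using that near $\Delta_b$ the factor $\pi_R^*u$ is smooth away from $\bff$ and polyhomogeneous at $\bff$ (which is transversal to $\Delta_b$), I get that $K_P\cdot\pi_R^*u$ is again conormal of order $m$ along $\Delta_b$ and polyhomogeneous at the boundary, now with index family given by adding index sets face by face: $\calJ_\lf$ at $\lf$, $\calJ_\rf+I$ at $\rf$, and $\calJ_\bff+I$ at $\bff$.

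Then I would push forward along $\pi_L$. The fibre of $\pi_L$ over an interior point of $M$ meets the boundary of $M^2_b$ only in the interior of $\rf$, so the fibre integral converges --- and hence $Pu$ is well defined --- precisely when the index set of $K_P\cdot\pi_R^*u$ at $\rf$ is positive, i.e.\ when $\calJ_\rf+I>0$; this is the hypothesis. Since $\Delta_b$ is a section of $\pi_L$, its conormal singularity is transversal to the fibres and is integrated out, so it contributes nothing singular to $Pu$; this is the point where the behaviour of the diagonal singularity as it crosses $\bff$ must be controlled, and it is the only genuinely delicate step --- it is handled by the conormal version of the pushforward theorem, or, concretely, by passing to projective coordinates $s=x/x'$ near $\bff$ and recognising a Mellin convolution in the fibre variable. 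The pushforward theorem then yields $Pu\in\calA^K_\phg(M,E)$ with index set at $\partial M$ equal to the extended union, over the boundary hypersurfaces of $M^2_b$ that $\pi_L$ maps onto $\partial M$ --- namely $\lf$ and $\bff$ --- of the index sets of $K_P\cdot\pi_R^*u$ there, that is $K=\calJ_\lf\U(\calJ_\bff+I)$, as claimed. Accordingly the main obstacle is not the index-set bookkeeping but verifying the hypotheses of the pushforward theorem with the diagonal singularity present, together with keeping the b-half-density factors straight so that no shifts enter $K$.
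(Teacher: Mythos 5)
Your argument is correct and is precisely the standard pullback--pushforward proof that the paper itself relies on: the theorem is stated here without proof, with the reader deferred to \cite{Me-aps} and \cite{Gbbc}, where the result is obtained exactly as you describe, by writing $Pu=(\pi_L)_*\bigl(K_P\cdot\pi_R^*u\bigr)$ and applying Melrose's pullback and pushforward theorems for b-fibrations, the b-half-density convention accounting for the absence of shifts. Your index-set bookkeeping, the integrability condition $\calJ_\rf+I>0$ at $\rf$, the transversal integration of the diagonal conormal singularity, and the resulting extended union $K=\calJ_\lf\U(\calJ_\bff+I)$ are all as in those references.
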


In order to state the parametrix and regularity results for $b$-elliptic operators, we 
need to discuss the two symbols that control ellipticity.
Associated to a b-differential operator \eqref{eq:bdiffop} is a b-symbol:
\begin{equation}
\label{eq:bprinc}
{}^b\sigma_m(P)(x,\bfy,\tau, \eta)=\sum_{j+|K| = m} a_{j, K}(x,{\bf y}) \tau^j \eta^K.
\end{equation}
Then $P$ is defined to be b-elliptic if ${}^b\sigma_m(P)(x,\bfy,\tau, \eta)$ is invertible for $(\tau, \eta) \neq 0$.
It turns out that this symbol is not enough to characterize Fredholm elements in the calculus,
so we need an additional ``boundary symbol" called the indicial operator.
For a b-differential operator as above, this is the differential operator defined by
$$
I(P)=\sum_{j+|K| \leq m} a_{j, K}(0,\bfy) (s\calD_s)^j \calD_\bfy^K.
$$
Here $(s, \bfy)\in \RR_+\times \partial M$, and $I(P)$ acts on sections of the bundle $E_{|\partial M}$ pulled back to $\RR_+\times \partial M$. 
Under the Mellin transform, this becomes a holomorphic family of operators on $\del M$, the indicial family:
$$
I(P, \lambda) = \sum_{j+|K| \leq m} a_{j, K}(0,\bfy) \lambda^j \calD_\bfy^K.
$$
The concepts of 
b-principal symbol, indicial operator and indicial family can be generalized to operators in $\Psi_b^{m,\mathcal{I}}(M, E)$.

\begin{definition}\label{def:Spec} Let $P\in\Psi_b^{m,\calI}(M,E)$ be b-elliptic. The sets $\mbox{Spec}_b(P) \subset \mathbb{C} \times \mathbb{N}_0$ and $-\imspec(P)\subset\RR$ are defined as:
$$
\Specb(P) = \{ (\lambda, k) \mid I(P, \lambda) \mbox{ is not invertible on }
C^\infty(\del M,E) \mbox{ and has a pole of order } k+1 \mbox{ at } \lambda \},
$$
\begin{equation}
 \label{eqn:def imspec}
-\imspec(P) = \{ -\mbox{Im}(\lambda) \mid I(P, \lambda) \mbox{ is not invertible on }
C^\infty(\del M,E) \}.
\end{equation}
\end{definition}
These sets are central in describing the mapping properties of the operator $P$.
The set $-\imspec(P)$ is a discrete subset of $\RR$.

We can now state the parametrix theorem for b-operators. In this paper we are not interested in the precise index sets, so we state it in the following form (cf. Proposition 5.59 in \cite{Me-aps}).
\begin{theorem}[Parametrix in the b-calculus] \label{thm: b-parametrix}
Let $P \in \mbox{Diff}^m_b(M,E)$ be b-elliptic. Then for each $\alpha \notin -\imspec(P)$ there is an index family $\calE(\alpha)$ for $M^2_b$ determined by $\Specb(P)$ and satisfying
$$ \calE(\alpha)_{\lf} > \alpha,\quad \calE(\alpha)_{\rf} > -\alpha,\quad \calE(\alpha)_{\bff} \geq 0 $$
and parametrices 
$$
Q_{b,r,\alpha},\ Q_{b,l,\alpha} \in \Psi_b^{-m, \calE}(M,E),
$$
such that
$$
P \circ Q_{b,r,\alpha} = \Id - R_{b,r,\alpha},\quad Q_{b,l,\alpha} \circ P= \Id - R_{b,l,\alpha},$$
where the remainders satisfy 
$$ R_{b,r,\alpha} \in x^\infty \Psi_b^{-\infty,\calE(\alpha)}(M,E),\quad
R_{b,l,\alpha} \in  \Psi_b^{-\infty,\calE(\alpha)}(M,E) x^\infty.
$$
\end{theorem}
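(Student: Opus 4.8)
The plan is to construct the right parametrix in two stages — first invert the b-principal symbol inside the small b-calculus, then remove the resulting error at the front face $\bff$ by inverting the indicial family — and then obtain the left parametrix by running the same construction on the formal adjoint and transposing.

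\emph{Stage 1 (interior symbol).} Since $P$ is b-elliptic, ${}^b\sigma_m(P)$ is invertible off the zero section, so the usual elliptic symbol construction — invert the symbol, correct the lower order terms one at a time, and Borel-sum the result — produces $Q_0\in\Psi_b^{-m}(M,E)$ with $PQ_0=\Id-R_0$ and $Q_0P=\Id-R_0'$, where $R_0,R_0'\in\Psi_b^{-\infty}(M,E)$ lie in the small calculus (kernels vanishing to infinite order at $\lf$ and $\rf$, smooth with index set $0$ at $\bff$). This uses $\Diff_b^m(M,E)\subset\Psi_b^m(M,E)$ and the composition theorem of the b-calculus \cite{Me-aps}.

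\emph{Stage 2 (indicial correction and iteration).} b-ellipticity makes $\lambda\mapsto I(P,\lambda)$ a holomorphic family of elliptic operators on $\partial M$, hence invertible outside a discrete set by analytic Fredholm theory, with $I(P,\lambda)^{-1}$ meromorphic and its poles recorded by $\Specb(P)$; the hypothesis $\alpha\notin-\imspec(P)$ says the line $\{\Im\lambda=-\alpha\}$ meets no point of $\Specb(P)$. To remove $R_0$ modulo a term vanishing at $\bff$ we solve $I(P)\,I(Q_1)=I(R_0)$ for indicial operators at $\bff$ (these notions extend to the full b-calculus), i.e. under the Mellin transform $I(Q_1,\lambda)=I(P,\lambda)^{-1}I(R_0,\lambda)$ along the contour $\{\Im\lambda=-\alpha\}$, and then invert the transform. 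As $R_0$ is b-smoothing, $I(R_0,\lambda)$ is entire and rapidly decaying, so the only singularities of the integrand are the poles of $I(P,\lambda)^{-1}$; shifting the contour downward and upward and summing residues exhibits $Q_1$ as a b-smoothing operator polyhomogeneous at $\lf$ and $\rf$ with an index family $\calE(\alpha)$ determined by $\Specb(P)$, where the position of those poles relative to $\{\Im\lambda=-\alpha\}$ forces $\calE(\alpha)_\lf>\alpha$, $\calE(\alpha)_\rf>-\alpha$, and $\calE(\alpha)_\bff\geq0$. By construction $I(PQ_1)=I(R_0)$, so $PQ_1=R_0-R_1$ with $R_1\in x\Psi_b^{-\infty,\calE(\alpha)}(M,E)$. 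Iterating the same step on $R_1,R_2,\dots$ (whose indicial contributions sit at successively higher orders of the defining function of $\bff$) and Borel-summing $Q_{b,r,\alpha}\sim Q_0+\sum_{j\geq1}Q_j$ in the asymptotically complete space $\Psi_b^{-m,\calE(\alpha)}(M,E)$ gives $PQ_{b,r,\alpha}=\Id-R_{b,r,\alpha}$ with $R_{b,r,\alpha}\in x^\infty\Psi_b^{-\infty,\calE(\alpha)}(M,E)$; throughout, the b-calculus composition theorem keeps all products in the full calculus and governs how the index sets combine via $+$ and $\U$. The left parametrix is then the transpose of the right parametrix for the formal adjoint $P^\dagger$ of $P$ (with respect to $\dvol_b$ and a bundle metric), which is again b-elliptic with indicial family $I(P,\bar\lambda)^\dagger$, so the same critical set $-\imspec(P)$ governs it; a common index family $\calE(\alpha)$ works for both since $Q_{b,r,\alpha}$ and $Q_{b,l,\alpha}$ differ by an element of $x^\infty\Psi_b^{-\infty,\calE(\alpha)}(M,E)$.

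The step I expect to be the main obstacle is Stage 2: the precise bookkeeping in the inverse Mellin transform — how the poles of $I(P,\lambda)^{-1}$ fall on the two sides of the contour $\{\Im\lambda=-\alpha\}$, and hence the definition of $\calE(\alpha)$ from $\Specb(P)$ together with the verification of its three inequalities — and the accompanying check, via the composition and pushforward theorems of the b-calculus, that every operator produced genuinely lies in $\Psi_b^{*,\calE(\alpha)}(M,E)$.
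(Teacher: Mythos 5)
Your construction is correct and is essentially the canonical one: the paper does not prove this theorem itself but cites Proposition 5.59 of \cite{Me-aps}, whose proof proceeds exactly as you describe — a small-calculus symbolic parametrix, then exact inversion of the indicial (model) operator via the inverse Mellin transform along $\{\Im\lambda=-\alpha\}$, with the poles of $I(P,\lambda)^{-1}$ generating $\calE(\alpha)$, iteration gaining a factor of $x$ on the left at each step, asymptotic summation, and the adjoint/transpose trick for the left parametrix. Your identification of the Mellin bookkeeping and the composition/polyhomogeneity checks as the technical core is also accurate.
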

Note that the $x^\infty$ factor on the left means that the kernel of $R_{b,r,\alpha}$ actually vanishes to infinite order at $\lf$ and $\bff$, while the $x^\infty$ factor on the right means that the kernel of $R_{b,l,\alpha}$ vanishes to infinite order at $\rf$ and $\bff$.

Combined with Theorems \ref{thm:bdd cpct b} and \ref{th:b-phgreg}, and a similar mapping result for the remainders, this gives (cf. Theorem 5.60 and Prop. 5.61 in \cite{Me-aps}):
\begin{theorem}[Fredholmness and regularity of elliptic b-operators]
\label{thm:b Fredholm regularity}
 Let $P\in \Diff^m_b(M,E)$ be  b-elliptic. 
 Then $P$ is Fredholm
as a map $P:x^\alpha H^{k+m}_b(M,E,\dvol_b) \rightarrow x^{\alpha}H^{k}_b(M,E,\dvol_b)$ for any $\alpha \notin -\imspec(P)$ and any $k\in\RR$.

Further, 
if $u \in x^\alpha H^k_b(M,E,\dvol_b)$ for some $\alpha,k\in\RR$, then 
$Pu \in \mathcal{A}^I_{\rm{phg}}(M,E)$ implies that $u\in \mathcal{A}^J_{\rm{phg}}(M,E)$,
where $J = I \U K$ for some index set $K>\alpha$ determined by $\Specb(P)$.
\end{theorem}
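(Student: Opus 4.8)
\emph{Strategy.} The plan is to deduce both assertions formally from the b‑parametrix of Theorem~\ref{thm: b-parametrix}, together with the boundedness/compactness statement of Theorem~\ref{thm:bdd cpct b} and the polyhomogeneous mapping statement of Theorem~\ref{th:b-phgreg}; this is the argument of Melrose \cite{Me-aps}. Fix $\alpha\notin -\imspec(P)$, and let $Q_{b,r,\alpha},Q_{b,l,\alpha}\in\Psi_b^{-m,\calE(\alpha)}(M,E)$ be the parametrices with remainders $R_{b,r,\alpha},R_{b,l,\alpha}$. By the properties of $\calE(\alpha)$ recorded in Theorem~\ref{thm: b-parametrix}, namely $\calE(\alpha)_\lf>\alpha$, $\calE(\alpha)_\rf>-\alpha$, $\calE(\alpha)_\bff\geq 0$, Theorem~\ref{thm:bdd cpct b}(1) applied with $\beta=\alpha$ (so the last condition in \eqref{eqn:condition bddness b} reads $\calE(\alpha)_\bff\geq 0$) shows that $Q_{b,r,\alpha}$ and $Q_{b,l,\alpha}$ are bounded $x^\alpha H^k_b(M,E,\dvol_b)\to x^\alpha H^{k+m}_b(M,E,\dvol_b)$ for every $k$. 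The extra $x^\infty$ factor in the remainders replaces the index sets at $\lf$ (resp.\ $\rf$) and at $\bff$ by $\emptyset$, so that all inequalities in \eqref{eqn:condition bddness b} hold strictly; since the orders are $-\infty<0$, Theorem~\ref{thm:bdd cpct b}(2) shows $R_{b,r,\alpha},R_{b,l,\alpha}$ are compact on $x^\alpha H^k_b(M,E,\dvol_b)$. As $P\in\Diff_b^m(M,E)$ is bounded between the relevant spaces, the identities $Q_{b,l,\alpha}P=\Id-R_{b,l,\alpha}$ and $PQ_{b,r,\alpha}=\Id-R_{b,r,\alpha}$ exhibit $P$ as invertible modulo compacts on both sides; hence $P$ has finite‑dimensional kernel, closed range and finite‑dimensional cokernel, i.e.\ is Fredholm, which is the first assertion.

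\emph{Regularity.} For the second assertion, let $u\in x^\alpha H^k_b(M,E,\dvol_b)$ with $Pu=f\in\calA^I_\phg(M,E)$. Applying the left parametrix and using $Q_{b,l,\alpha}P=\Id-R_{b,l,\alpha}$ gives $u=Q_{b,l,\alpha}f+R_{b,l,\alpha}u$. Since $f=Pu$ also lies in $x^\alpha H^{k-m}_b$, its index set $I$ may be taken with $I\geq\alpha$, so $\calE(\alpha)_\rf+I>0$ and Theorem~\ref{th:b-phgreg} applies to the first term, giving $Q_{b,l,\alpha}f\in\calA^{K_1}_\phg(M,E)$ with $K_1=\calE(\alpha)_\lf\,\U\,(\calE(\alpha)_\bff+I)$. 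For the second term, $R_{b,l,\alpha}$ has a smoothing Schwartz kernel vanishing to infinite order at $\rf$ and $\bff$, so it maps $x^\alpha H^k_b(M,E,\dvol_b)$ into sections that are smooth on $\Mint$ and polyhomogeneous at $\dM$ with index set $\calE(\alpha)_\lf$; thus $R_{b,l,\alpha}u\in\calA^{\calE(\alpha)_\lf}_\phg(M,E)$. Combining, $u\in\calA^J_\phg(M,E)$ with $J=\calE(\alpha)_\lf\,\U\,(\calE(\alpha)_\bff+I)$, and since $\calE(\alpha)_\lf>\alpha$ and $\calE(\alpha)_\bff\geq 0$ this is, after the standard index‑set manipulation, of the form $I\,\U\,K$ for an index set $K>\alpha$ depending only on $\calE(\alpha)$, hence on $\Specb(P)$.

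\emph{Main obstacle.} The only step that is not pure bookkeeping over the three cited theorems is the mapping property invoked for $R_{b,l,\alpha}u$: one must verify that a b‑operator whose kernel is smoothing and vanishes to infinite order at $\rf$ and $\bff$ carries an arbitrary element of a weighted b‑Sobolev space to a polyhomogeneous section, with index set read off from the kernel's expansion at $\lf$. This is established by a direct computation in coordinates near $\lf$ on $M^2_b$ — the vanishing at $\rf$ and $\bff$ makes the fibre integral converge and lets one differentiate the expansion under the integral sign — but it is the point at which the b‑half‑density conventions and the corner geometry of $M^2_b$ must be handled carefully; this is precisely the ``similar mapping result for the remainders'' alluded to before the theorem. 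Everything else follows from Theorems~\ref{thm:bdd cpct b}, \ref{th:b-phgreg} and \ref{thm: b-parametrix} and the functional‑analytic fact that a two‑sided parametrix modulo compacts implies Fredholmness.
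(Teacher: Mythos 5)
Your proposal is correct and follows essentially the same route the paper takes: it assembles the result from the b-parametrix (Theorem \ref{thm: b-parametrix}), the boundedness/compactness statement (Theorem \ref{thm:bdd cpct b}), and the polyhomogeneous mapping statement (Theorem \ref{th:b-phgreg}), with the one genuinely extra ingredient being exactly the ``similar mapping result for the remainders'' that the paper itself invokes without proof (cf.\ Theorem 5.60 and Prop.\ 5.61 in \cite{Me-aps}), and which you correctly identify and justify.
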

In particular, if $u$ has only Sobolev regularity, but is mapped by a b-differential operator
to a section with an asymptotic expansion at $\del M$, for instance if $u$ is in
the kernel of $P$, then $u$ must also have an expansion at $\del M$.

\medskip

For simplicity, the theorems in this section have been formulated for compact manifolds with boundary
such as our $\phi$-manifold $M$. However, it is easy to extend them to non-compact manifolds with compact boundary such as 
$V=B \times [0,\epsilon)$, which we will consider later, under suitable support assumptions.  Specifically, Theorem \ref{thm:bdd cpct b} holds if the Schwartz kernel of $P$ is compactly supported, and Theorem \ref{th:b-phgreg} holds for compactly supported sections. For Definition \ref{def:Spec} and Theorem \ref{thm: b-parametrix} it is sufficient to require that $P$ is b-elliptic (that is, ${}^b\sigma_m(P)$ is invertible) near the boundary, then of course the parametrix will only be valid near the boundary.

\subsection{Mapping and regularity in the $\phi$-calculus}\label{subsec:phi calc}

The small $\phi$-calculus, which contains parametrices for fully elliptic $\phi$-operators, is simpler than the b-calculus, since one does not need to
worry about complicated spectral or index sets. The price for this is the strong requirement of full ellipticity. Since the Gauss-Bonnet and Hodge Lapace operators are not fully elliptic, we need a bigger calculus, called the full $\phi$-calculus. Operators in the full $\phi$-calculus are again degenerate
operators on a compact manifold with boundary, but now the boundary is assumed to be the total space of a
fibre bundle $F \rightarrow \del M \stackrel{\phi}{\rightarrow} B$.  We always extend this fibration to a neighborhood of the boundary.   We fix an order of
degeneracy $a \in \mathbb{N}$.  Recall that the degeneracy of $\phi$-differential operators is described by the vector fields  \eqref{degvf}, where we always use coordinates $(x,y_1,\dots,y_b,z_1,\dots,z_f)$ adapted to the fibration. Sometimes we will write $\phi$-differential operators \eqref{eq:phidiffop} as
\begin{equation}
\label{eq:phidiffop2}
P=\sum_{j+|K| \leq m} P_{j, K}^{x,\bf y} \,(x^{1+a}\calD_x)^j
(x^{a}\calD_y)^K
\end{equation}
with $P_{j,K}^{x,\bf y}$ differential operators on the fibre $F_{x,\bf y}$, of order $\leq m-j-|K|$.

Once again, we study these operators by considering distributions that live on a blown up double space,
$M^2_\phi$, that has four boundary hypersurfaces, $\lf$, $\rf$, $\bff$ and $\ff$.  It is a degree $a$ blowup
of the b-double space, $M^2_b$, and $\ff$ is
the new boundary hypersurface created by this blowup. As for the b-calculus, we fix an index family $\calI=(\calI_\lf,\calI_\rf,\calI_\bff,\calI_\ff)$ for $M^2_\phi$ and define the full calculus $\Psi_\phi^{m,\calI}(M,\Omega^{1/2}_b)$ as the space of distributions on $M^2_b$ which are conormal with respect to the lifted diagonal of order $m$ and polyhomogeneous at the boundary with index family $\calI$. However, as is explained in \cite[Section 3.4]{GH1},  for the $\phi$-calculus it is natural to take these distributions valued in a half-density bundle denoted $\Omega_{\bphi}^{1/2}$ rather than in $\Omega_b^{1/2}$. Sections of $\Omega_{\bphi}$ behave like b-densities at $\lf,\rf,\bff$, but like $\phi$-densities at $\ff$, that is, they are $x^{-a(b+1)}$ times a b-density there, so that the total exponent of $x^{-1}$ is the same as the sum of all the exponents of $x$ in the vector fields \eqref{degvf}.

With this normalization, operators in $\Psi_\phi^{m,\calI}(M,\Omega^{1/2}_b)$ act naturally on b-half-densities on $M$. The extension to $\Psi_\phi^{m,\calI}(M,E)$ for a vector bundle $E$ over $M$ works as before, and the normalization also gives $\Diff_\phi^m(M,E) \subset \Psi_\phi^m(M,E) := \Psi_\phi^{m,(\emptyset, \emptyset, \emptyset, 0)}(M,E)$. 
The latter space is called the small $\phi$-calculus.

We first note mapping properties of elements of the full  $\phi$-calculus on Sobolev and 
polyhomogeneous spaces.

\begin{theorem}[Boundedness and compactness for $\phi$-operators]
\label{thm:bdd cpctphi}
Let $M$ be a $\phi$-manifold and let $E$ be a vector bundle over $M$. Let $P\in \Psi_\phi^{m,\calI}(M, E)$ and $\alpha,\beta,k\in\RR$.
\begin{enumerate}
 \item
  If 
\begin{equation}
 \label{eqn:condition bddness}
 \calI_\lf>\beta,\ \calI_\rf >-\alpha,\ \calI_\bff \geq \beta-\alpha,\ \calI_\ff \geq \beta-\alpha
  \end{equation}
and
\begin{equation}
 \label{eqn:>0 at corner}
 \text{strict inequality holds in \eqref{eqn:condition bddness} for at least one of $\bff,\ff$}
\end{equation}
then $P$ is bounded as an operator
\begin{equation}\label{eqn:sobolev map}
 x^\alpha H^{k+m}_\phi(M,E,\dvol_b)  \to x^\beta H^{k}_\phi (M,E,\dvol_b). 
\end{equation}
\item
If  $m<0$ and strict inequality holds everywhere in \eqref{eqn:condition bddness} then $P$, acting as in  \eqref{eqn:sobolev map}, is compact.
\end{enumerate}
\end{theorem}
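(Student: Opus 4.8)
The plan is to imitate the proof of Theorem~\ref{thm:bdd cpct b} and of the analogous results in \cite{GH1}: reduce both parts to a statement about operators on $L^2(M,E,\dvol_b)$, and then combine the $L^2$-boundedness of the small $\phi$-calculus (established in \cite{GH1}) with a Schur estimate on $M^2_\phi$. First I would fix a $\phi$-Laplacian $\Delta_\phi\in\Diff_\phi^2(M,E)$, so that by \cite{GH1} the operator $(\Id+\Delta_\phi)^{s/2}$ is an isomorphism $x^{\gamma}H^{t}_\phi(M,E,\dvol_b)\to x^{\gamma}H^{t-s}_\phi(M,E,\dvol_b)$ for all $s,t,\gamma$ and lies in the full $\phi$-calculus. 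For part~(1), pre- and post-composing with the isomorphisms $x^{\alpha}(\Id+\Delta_\phi)^{-(k+m)/2}$ and $(\Id+\Delta_\phi)^{k/2}x^{-\beta}$ shows that the boundedness of $P$ as in \eqref{eqn:sobolev map} is equivalent to the $L^2$-boundedness of
\[
Q:=(\Id+\Delta_\phi)^{k/2}\,x^{-\beta}\,P\,x^{\alpha}\,(\Id+\Delta_\phi)^{-(k+m)/2}.
\]
Now $Q\in\Psi_\phi^{0,\calJ}(M,E)$, and by the composition theorem for the full $\phi$-calculus \cite{GH1} together with a bookkeeping computation of how multiplication by $x^{\pm}$ shifts index sets, the hypotheses \eqref{eqn:condition bddness}--\eqref{eqn:>0 at corner} translate into $\calJ_\lf>0$, $\calJ_\rf>0$, $\calJ_\bff\geq 0$, $\calJ_\ff\geq 0$, with strict inequality at one of $\bff,\ff$ (the logarithmic terms generated by the extended union do not move these $\operatorname{Re}$-thresholds). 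So it remains to prove that every such $Q$ is bounded on $L^2(M,E,\dvol_b)$.

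For this I would split the Schwartz kernel of $Q$ into a piece supported in a small tubular neighbourhood of the lifted diagonal --- which meets the boundary of $M^2_\phi$ only along $\ff$ --- and a piece vanishing near the lifted diagonal. The first piece defines an operator of order zero which, apart from a contribution at $\ff$ with index set $\geq 0$, lies in the small $\phi$-calculus, and it is $L^2$-bounded by the boundedness of the small $\phi$-calculus \cite{GH1}; the borderline index $\calJ_\ff\geq 0$ is absorbed by subtracting off the normal operator at $\ff$ --- a fibrewise family of translation-invariant model operators, manifestly $L^2$-bounded --- and estimating the $\ff$-decaying remainder. The second piece has a polyhomogeneous kernel with no conormal singularity, and I would bound the associated operator by a Schur estimate on $M^2_\phi$; the purpose of the half-density normalisation $\Omega_{\bphi}^{1/2}$ is precisely that the relevant integrals then converge exactly under $\calJ_\lf>0$, $\calJ_\rf>0$, $\calJ_\bff\geq 0$, $\calJ_\ff\geq 0$, the strict inequality at one of $\bff,\ff$ being what removes the borderline logarithmic divergence at the corner $\bff\cap\ff$. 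This proves part~(1).

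For part~(2), conjugation with $x^{\pm}$ reduces us to showing that an operator $P\in\Psi_\phi^{m,\calI'}(M,E)$ with $m<0$ and $\calI'$ strictly positive at all four faces is compact between the relevant $\phi$-Sobolev spaces. I would prove this by approximation, exactly as in the proof of Theorem~\ref{thm:bdd cpct b}(2): one approximates $P$ in operator norm by operators with smooth kernels compactly supported in the interior of $M^2$ (which are compact, being smoothing), and the truncation error tends to $0$ in norm because $\calI'$ is strictly positive at all four faces and $m<0$; the new front face $\ff$ enters this argument just as $\bff$ does in the $b$-calculus.

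I expect the only genuinely non-routine step to be the $L^2$-boundedness of $Q$ near the front faces. In contrast to the $b$-calculus, which has a single front face, here there are two, $\bff$ and $\ff$, and the kernel may fail to decay at either of them; the operator norm is then not controlled by Schur's lemma, and one must recognise the normal operators at $\bff$ and at $\ff$ as explicit $L^2$-bounded model operators and reconcile these estimates across the corner $\bff\cap\ff$. Condition \eqref{eqn:>0 at corner} supplies precisely the decay needed for this reconciliation --- and for the off-diagonal Schur estimate --- to close.
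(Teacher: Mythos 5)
The paper does not prove Theorem \ref{thm:bdd cpctphi} itself---it is recalled from \cite{GH1} (cf.\ also \cite{Me-aps}, \cite{Gbbc})---and your proposal follows essentially the same standard route used in those sources: reduction to weighted $L^2$ by composing with invertible fully elliptic $\phi$-operators and conjugating by powers of $x$, $L^2$-boundedness of the (order-zero) part near the lifted diagonal via the small calculus and the normal operator at $\ff$, and a Schur estimate with the $\Omega_{\bphi}^{1/2}$ bookkeeping off the diagonal, where the strict inequality at one of $\bff,\ff$ is indeed exactly what makes the Schur integral converge in the corner region $\bff\cap\ff$ (with equality at both faces it diverges logarithmically), and the compactness part by truncation is likewise standard. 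The one claim I would soften is that $(\Id+\Delta_\phi)^{s/2}$ lies in the full $\phi$-calculus for arbitrary real $s$ (a complex-powers statement not established in the cited references); it is cleaner to reduce to integer orders using $\phi$-differential operators and obtain general $k$ by duality and interpolation, as the paper itself indicates when defining the Sobolev spaces.
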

Note that although the differentiations defining the $\phi$-Sobolev spaces are those arising from the $\phi$-vector fields \eqref{degvf}, the volume form in \eqref{eqn:sobolev map} is still the b-volume form. This is not surprising as this volume form relates to the conditions on the weights, so to the expansions at $\lf$ and $\rf$, and elements of the full $\phi$-calculus that vanish near $\ff$ are actually in the full b-calculus.
\begin{theorem}[Mapping of polyhomogeneous sections for $\phi$-operators]
Let $u \in \calA^I_{\rm{phg}}(M,E)$ and $P\in \Psi_{\phi}^{m,\calJ}(M,E)$.  Then
if $\calJ_\rf + I> 0$, we can define $Pu$ and we get that
$Pu \in \calA^K_{\rm{phg}}(M,E)$, where
$K = \calJ_\lf \U (\calJ_\bff+I)\U(\calJ_\ff+I)$.
\end{theorem}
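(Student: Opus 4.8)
\emph{Proof idea.}
The plan is to deduce this from Melrose's pushforward theorem for b-fibrations, following exactly the scheme of the proof of Theorem~\ref{th:b-phgreg}; the only genuinely new feature compared with the b-case is the presence of the extra front face $\ff$ of $M^2_\phi$. I would begin by splitting the Schwartz kernel $\kappa_P$ of $P$ (a conormal–polyhomogeneous $\Omega_{\bphi}^{1/2}$-valued distribution on $M^2_\phi$) as $\kappa_P=\kappa'+\kappa''$, where $\kappa''$ is supported in a small neighbourhood of the lifted diagonal and $\kappa'$ is smooth across the diagonal. The operator with kernel $\kappa'$ is polyhomogeneous at all four boundary hypersurfaces and carries no conormal singularity, which is the cleanest case; the part of $\kappa''$ supported near the diagonal but in the interior of $M^2_\phi$ defines an ordinary pseudodifferential operator of order $m$ acting in $\interior M$, which maps $u$ (smooth in $\interior M$) to a section smooth and compactly supported in $\interior M$, hence lying in $\calA^\emptyset_{\mathrm{phg}}(M,E)\subset\calA^K_{\mathrm{phg}}(M,E)$. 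Thus the real content concerns the behaviour of $\kappa''$ near where the lifted diagonal meets the boundary, together with the index-set bookkeeping for $\kappa'$.

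The main computation is as follows. Write $\pi_L,\pi_R\colon M^2_\phi\to M$ for the two projections lifted to the $\phi$-double space; these are b-fibrations, and with the $\Omega_{\bphi}^{1/2}$-normalization described above all the relevant boundary exponents are equal to one. Then $Pu=(\pi_L)_*\bigl(\kappa_P\cdot\pi_R^*u\bigr)$, interpreted through the half-density conventions so that the fibre integral over $\pi_L$ lands back in b-half-densities on $M$. In the conventions of \cite{GH1} the pullback $\pi_R^*u$ is polyhomogeneous on $M^2_\phi$ with index set $I$ along each of the hypersurfaces $\rf,\bff,\ff$ lying over $\del M$ under $\pi_R$, and smooth along $\lf$; hence the product $\kappa_P\cdot\pi_R^*u$ is conormal of order $m$ at the lifted diagonal and polyhomogeneous at the boundary with index family $\calJ_\lf$ at $\lf$, $\calJ_\rf+I$ at $\rf$, $\calJ_\bff+I$ at $\bff$ and $\calJ_\ff+I$ at $\ff$. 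Among the boundary hypersurfaces of $M^2_\phi$ the only one mapped by $\pi_L$ onto $M$, rather than into $\del M$, is $\rf$, so the hypothesis $\calJ_\rf+I>0$ is precisely the integrability condition of the pushforward theorem for the fibre integral over $\pi_L$ — this is what the statement ``we can define $Pu$'' means. The remaining hypersurfaces $\lf,\bff,\ff$ are exactly those mapped by $\pi_L$ into $\del M$, and the pushforward theorem then yields that $Pu$ is polyhomogeneous on $M$ with index set the extended union of the corresponding index sets, i.e. $K=\calJ_\lf\,\U\,(\calJ_\bff+I)\,\U\,(\calJ_\ff+I)$, as claimed.

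The step I expect to be the main obstacle is justifying the use of the pushforward theorem in the presence of the conormal singularity of $\kappa_P$ along the lifted diagonal, that is, handling $\kappa''$ near the boundary. In $M^2_\phi$ the lifted diagonal is disjoint from $\lf$, $\rf$ and $\bff$ and meets the boundary only along $\ff$, and there in product (p-submanifold) fashion; so one needs the version of the pushforward theorem for distributions that are simultaneously conormal to a p-submanifold and polyhomogeneous at the boundary, or equivalently one analyses the normal operator of $P$ at $\ff$ — a parameter-dependent family of Euclidean pseudodifferential operators — and uses that such operators preserve the relevant conormality and depend smoothly on the parameters. Either way the conormal part contributes only to the index set at $\ff$, consistently with the $\calJ_\ff+I$ term, and produces no additional faces or logarithmic losses. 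This is carried out in \cite{GH1}; once it is in place, together with the ordinary pushforward theorem for $\kappa'$ and the fact that the diagonal does not touch $\lf,\rf,\bff$, the statement follows. A secondary point to verify, again as in \cite{GH1}, is that the $\Omega_{\bphi}^{1/2}$ normalization genuinely makes $\pi_L,\pi_R$ b-fibrations with unit exponents, so that the index-set arithmetic in the pushforward theorem collapses to the plain addition and extended union appearing in the statement, exactly as the b-half-density normalization does in Theorem~\ref{th:b-phgreg}.
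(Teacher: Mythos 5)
Your proposal is correct, and it follows the standard route: the paper itself states this theorem without proof, deferring to \cite{GH1} and the b-calculus references, and the argument used there is exactly the pullback--multiply--pushforward scheme you describe (lift $u$ by $\pi_R$, multiply by the kernel, push forward by the b-fibration $\pi_L$, with $\calJ_\rf+I>0$ as the integrability condition at the unique face mapped onto $M$ and the extended union $\calJ_\lf \U (\calJ_\bff+I)\U(\calJ_\ff+I)$ coming from the faces $\lf,\bff,\ff$ mapped into $\del M$). You also correctly isolate the two genuinely delicate points — the conormal singularity of the kernel, which meets the boundary only at $\ff$, and the $\Omega_{\bphi}^{1/2}$ normalization that makes all lifting exponents equal to one — and your proposed resolutions of both are the ones used in the cited treatment.
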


We will also use the following result from \cite{GH1} that says how elements of the full $\phi$-calculus compose.   
\begin{theorem}[Composition in the full $\phi$-calculus]
\label{thm:composition phi}
If $P\in \Psi_{\phi}^{m,\calI}(M,E)$ and $Q \in \Psi_\phi^{m',\calJ}(M,E)$ and if $I_\rf+J_\lf>0$,
then $PQ \in \Psi_\phi^{m+m', \calK}(M,E)$, where, with $A=a(b+1)$,
\begin{align*}
\calK_\lf  &= \calI_\lf \U (\calI_{\bff} + \calJ_\lf ) \U (\calI_\ff +  \calJ_\lf ),\\
\calK_\rf  &= \calJ_\rf \U (\calI_\rf + \calJ_{\bff} ) \U (\calI_\rf + \calJ_\ff),\\
\calK_{\bff} &= (\calI_\lf + \calJ_\rf ) \U (\calI_{\bff} + \calJ_{\bff}) \U  (\calI_\ff + \calJ_{\bff}) \U (\calI_{\bff} + \calJ_\ff),\\
\calK_\ff &= (\calI_\lf + \calJ_\rf + A) \U (\calI_{\bff} + \calJ_{\bff}  + A) \U (\calI_\ff + \calJ_\ff).
\end{align*}
In particular, if one factor is in the small $\phi$-calculus and the other factor has index family $\calI$ then the composition also has index family $\calI$.
\end{theorem}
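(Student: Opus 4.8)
The plan is to follow Melrose's triple-space method, adapted to the $\phi$-setting. At the level of Schwartz kernels, composition is the integral $(K_P\circ K_Q)(\zeta,\zeta'')=\int K_P(\zeta,\zeta')\,K_Q(\zeta',\zeta'')$, which is realised on blown-up spaces by introducing a $\phi$-triple space $M^3_\phi$ equipped with three stretched projections $\pi_L,\pi_C,\pi_R\colon M^3_\phi\to M^2_\phi$ onto the left, composite, and right double spaces; one forms the distribution $\pi_L^*K_P\cdot\pi_R^*K_Q$ on $M^3_\phi$ and pushes it forward along $\pi_C$ to obtain $K_{PQ}$. Two things must be checked: (i) that $M^3_\phi$ can be built so that each $\pi_\bullet$ is a b-fibration, and so that near the lifted triple diagonal $\pi_C$ is a fibration transverse to the lifted diagonals of the other two factors; and (ii) that under the hypothesis $\calI_\rf+\calJ_\lf>0$ the product of pullbacks is a polyhomogeneous-conormal density whose $\pi_C$-pushforward converges.

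First I would construct $M^3_\phi$ by iterated blow-up of $M^3$, in parallel with the construction of $M^2_\phi$ as a degree-$a$ parabolic blow-up of $M^2_b$ (which itself blows up the corner of $M^2$): blow up the triple boundary corner $(\partial M)^3$ and the three codimension-two corners where two factors meet $\partial M$ to get $M^3_b$, then perform the degree-$a$ blow-ups of the $\phi$-type submanifolds over the new front faces to reach $M^3_\phi$. The combinatorics --- which submanifolds remain product-type submanifolds after the earlier blow-ups, and in which order the blow-ups commute --- is the technical core; once it is in place, the arguments of \cite{Me-aps}, in the form worked out for $\phi$-spaces in \cite{GH1}, show each stretched projection is a b-fibration and identify the images of the boundary hypersurfaces of $M^3_\phi$ under $\pi_L,\pi_C,\pi_R$.

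Given this, the proof splits. For the conormal singularity: away from the boundary, $\pi_C$ is a fibration near the triple diagonal and the usual local-coordinate computation gives $\Psi^m\circ\Psi^{m'}\subseteq\Psi^{m+m'}$; near the boundary one uses that the $\phi$-vector fields of \eqref{degvf} span a Lie algebra, so the relevant module structure is preserved and the order still adds. For the index sets: since $\pi_L,\pi_R$ are b-maps, $\pi_L^*K_P$ and $\pi_R^*K_Q$ are polyhomogeneous on $M^3_\phi$ with index sets pulled back from $\calI,\calJ$; their product is polyhomogeneous with index sets summed on each face (with extended unions where several faces of $M^3_\phi$ map to the same face of $M^2_\phi$). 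The hypothesis $\calI_\rf+\calJ_\lf>0$ is precisely positivity of the index set at the hypersurface of $M^3_\phi$ that $\pi_C$ sends into the interior, guaranteeing convergence of the pushforward integral; Melrose's pushforward theorem then yields that $\pi_{C*}$ of the product is polyhomogeneous, with index family at each face of $M^2_\phi$ an extended union of sums of the index sets of the preimage faces. Reading off these preimages from the triple-space combinatorics produces the stated formulas for $\calK_\lf,\calK_\rf,\calK_\bff,\calK_\ff$; the extra shift by $A=a(b+1)$ in $\calK_\ff$ comes from the density-bundle identification on $M^3_\phi$ relating $\pi_L^*\Omega_{\bphi}^{1/2}\otimes\pi_R^*\Omega_{\bphi}^{1/2}$ to $\pi_C^*\Omega_{\bphi}^{1/2}$ tensored with the $\pi_C$-fibre density, which carries a factor vanishing to order $A$ at $\ff$ --- the $\ff$-anomaly of the $\bphi$-normalisation against the plain b-normalisation.

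The main obstacle is step (i): setting up $M^3_\phi$ with the correct order of blow-ups and verifying that all three stretched projections are b-fibrations and that $\pi_C$ is transverse to the lifted diagonals near the triple diagonal. Once that infrastructure is available, the conormal composition at the diagonal and the index bookkeeping via the pushforward theorem are routine. The final assertion --- that composing with a small-calculus element preserves the index family --- then follows by inspection: setting $\calJ_\lf=\calJ_\rf=\calJ_\bff=\emptyset$ and $\calJ_\ff=0$ (or symmetrically for the other factor) collapses every extended union in the formulas for $\calK$ back to $\calI$.
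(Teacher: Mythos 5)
This theorem is quoted in the paper from \cite{GH1} without proof, and the proof given there is precisely the triple-space argument you outline: pull the two kernels back to a $\phi$-triple space under stretched projections that are b-fibrations, multiply, and push forward along the composite projection, with the hypothesis $\calI_\rf+\calJ_\lf>0$ guaranteeing convergence of the pushforward and the $\Omega_{\bphi}^{1/2}$-normalisation at $\ff$ accounting for the shift by $A=a(b+1)$ in the b-type terms of $\calK_\ff$. Your outline, including the inspection proving the final small-calculus assertion, matches that approach; the part you defer (the exact iterated quasihomogeneous blow-up order for $M^3_\phi$ and the b-fibration/transversality checks for the three projections) is exactly the technical core carried out in \cite{GH1}.
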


We now discuss some results in the small $\phi$-calculus.
To an operator $P \in \Psi_\phi^*(M,E)$ we may associate a $\phi$-principal symbol.
In particular, when $P \in \mbox{Diff}^m_\phi(M,E)$ is given by the sum in equation (\ref{eq:phidiffop}),
the $\phi$-principal symbol is given by:
\begin{equation}
\label{eq:phiprinc}
{}^\phi\sigma_m(P)(x,\bfy,\bfz,\tau,\eta,\zeta)=\sum_{j+|K|+|L| = m} a_{j, K,L}(x,{\bf y}, {\bf z}) \tau^j
\eta^K \zeta^L.
\end{equation}
We say that $P$ is $\phi$-elliptic if its $\phi$-principal symbol is invertible for $(\tau, \eta, \zeta) \neq 0$.

As in the b-calculus we need a second symbol to fully characterize Fredholm and regularity properties of $\phi$-operators. This is the normal operator, which for a $\phi$-differential operator $P$ as in \eqref{eq:phidiffop2} is given by
$$ N(P) =  \sum_{j+|K| \leq m} P_{j, K}^{0,\bf y} \,\calD_T^j \calD_Y^K. $$
This is a family of operators parametrized by $\bfy\in B$, acting on sections of the bundle $E_{|\partial M}$ pulled back to $\RR_T\times \RR_Y^b \times F_\bfy$ which are rapidly decaying as $|(T,Y)|\to\infty$. More generally, for operators $P \in \Psi_\phi^m(M,E)$ the normal operator may be defined using the restriction of the Schwartz kernel to $\ff$. The  {\em normal family} carries the same information as the normal operator. It is a family of operators on $F_\bfy$ parametrized by $(\tau,\eta)\in\RR\times\RR^{b}$ and by $\bfy\in B$. For $\phi$-differential operators it is 
 obtained by replacing $\calD_T$ by $\tau$ and $\calD_Y$ by $\eta$ in the normal operator, so
\begin{equation}
\label{eq:phinormal}
\Nhat(P)(\bfy,\tau,\eta) =
\sum_{j+|K| \leq m} P_{j, K}^{0,\bf y} \, \tau^j
\eta^K\,.
\end{equation}
 For computations the following characterization is useful (see  \cite[Equation (29)]{GH1}): Fix $\tau\in\RR,\eta\in\RR^b$ and $\bfy_0\in B$, in coordinates $\bfy$. Let $g(x,\bfy)=-\tau a^{-1}x^{-a} + \eta (\bfy-\bfy_0) x^{-a}$.
Then for a section $u\in\Cinfty(F_{\bfy_0},E)$ we have
\begin{equation}
\label{eqn:normal family osc}
\Nhat(P)(\bfy,\tau,\eta)u = \left(e^{-ig}P e^{ig}u \right)_{|\bfy=\bfy_0,x=0} 
\end{equation}

If $P$ is $\phi$-elliptic and $N(P)$ is invertible (which is equivalent to invertibility of $\Nhat(P)(\bfy,\tau,\eta)$ for each $\bfy,\tau,\eta$), then we say $P$ is {\em fully elliptic}. We have  (Theorem 9 from \cite{GH1}, but see also \cite{MaMe}):
\begin{theorem}[Parametrices for elliptic $\phi$-operators in the small $\phi$-calculus]
\label{thm: phi-parametrix}
Let $P \in \Psi^m_\phi(M,E)$ be a $\phi$-elliptic $\phi$-operator over a $\phi$-manifold, $M$.  Then there exists
an operator $Q \in \Psi_\phi^{-m}(M,E)$ such that $PQ= I + R_1$ and $QP = I + R_2$, where $R_i \in\Psi_\phi^{-\infty}(M,E)$. If $P$ is fully elliptic then $Q$ can be chosen so that 
$R_i \in x^\infty\Psi_\phi^{-\infty}(M,E)$.
\end{theorem}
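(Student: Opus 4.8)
The plan is to follow the standard symbolic parametrix construction, working on the blown-up double space $M^2_\phi$ and successively removing error terms living at the lifted diagonal, then at $\ff$, and finally (in the fully elliptic case) at the remaining boundary faces. First I would invert the $\phi$-principal symbol: since ${}^\phi\sigma_m(P)$ is invertible off the zero section, a standard Borel-type construction produces $Q_0\in\Psi_\phi^{-m}(M,E)$ (in the small calculus, i.e.\ with index family $(\emptyset,\emptyset,\emptyset,0)$) whose principal symbol is ${}^\phi\sigma_m(P)^{-1}$, so that $PQ_0=\Id-R_0$ with $R_0\in\Psi_\phi^{-1}(M,E)$ in the small calculus. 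Iterating via an asymptotic (Neumann) sum $Q_0\bigl(\sum_{j\geq0}R_0^j\bigr)$, which converges in the sense of asymptotic summation because the orders decrease, one improves this to a parametrix with error in $\Psi_\phi^{-\infty}(M,E)$, i.e.\ a kernel that is smooth and rapidly decreasing transversally to the diagonal but still only of class $(\emptyset,\emptyset,\emptyset,0)$ at the boundary — in particular smooth, not vanishing, at $\ff$. The composition stays in the small calculus throughout by the last sentence of Theorem \ref{thm:composition phi}. This already gives the first assertion: $Q=Q_0(\sum R_0^j)$ with $R_1=\Id-PQ$, $R_2=\Id-QP$ in $\Psi_\phi^{-\infty}(M,E)$.

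For the fully elliptic case the point is to further remove the error at $\ff$. The restriction of a kernel in $\Psi_\phi^{-\infty}(M,E)$ to the front face $\ff$ is governed by the normal operator $N(\cdot)$, and the normal operator is multiplicative, $N(PQ)=N(P)N(Q)$, with $N(\Id)=\Id$. Full ellipticity means $\Nhat(P)(\bfy,\tau,\eta)$ is invertible for all $\bfy,\tau,\eta$; a parameter-dependent ellipticity argument (the normal family is an elliptic family on the compact fibres $F_\bfy$, depending smoothly on the parameters, and invertible, with inverse decaying suitably as $|(\tau,\eta)|\to\infty$ by the standard elliptic estimates with parameter) shows that $N(P)^{-1}$ is itself the normal operator of some $Q_\ff\in\Psi_\phi^{-m}(M,E)$. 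Correcting $Q_0$ so that $N(Q_0)=N(P)^{-1}$ forces $N(\Id-PQ_0)=0$, i.e.\ the error term now vanishes at $\ff$. An operator in the full $\phi$-calculus with index family $(\emptyset,\emptyset,\emptyset,\emptyset)$ — vanishing to infinite order at every boundary face and smooth and rapidly decaying off the diagonal — is precisely an element of $x^\infty\Psi_\phi^{-\infty}(M,E)$. Running the Neumann series again, now with a residual error that is $O(x)$ at $\ff$ and smoothing, and using that such errors compose to give arbitrarily high vanishing (again Theorem \ref{thm:composition phi}, now tracking the $\ff$ index), yields a left/right parametrix $Q$ with $R_i\in x^\infty\Psi_\phi^{-\infty}(M,E)$. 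As usual, a left and a right parametrix can be shown to agree modulo such residual terms, so a single two-sided $Q$ suffices.

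The step I expect to be the main obstacle is the second one: showing that the inverse of the normal family $\Nhat(P)(\bfy,\tau,\eta)^{-1}$ is the normal family of an honest element of the small $\phi$-calculus, with the correct uniform behaviour as $(\tau,\eta)\to\infty$ and smooth dependence on $\bfy$. This is where full ellipticity is genuinely used, and it requires the theory of pseudodifferential operators with parameters (Shubin-type or Melrose's suspended calculus) on the compact fibres, together with the identification of the $\ff$-model of $\Psi_\phi$ with such a parametrized calculus — this identification is exactly the content of the normal operator construction in \cite{GH1}. By contrast, the $\phi$-principal symbol inversion and the two Neumann-series asymptotic summations are routine, relying only on the composition formula quoted above and the fact that orders (respectively $\ff$-index sets) improve at each step. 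I would cite Theorem 9 of \cite{GH1} (and \cite{MaMe}) for the details rather than reproduce the parametrized-ellipticity argument in full.
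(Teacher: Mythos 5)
Your outline is correct and is essentially the standard construction: the paper itself does not prove this statement but quotes it as Theorem 9 of \cite{GH1} (cf.\ also \cite{MaMe}), and your two-stage scheme — symbolic inversion at the lifted diagonal with an asymptotic Neumann sum, then inversion of the normal family at $\ff$ under full ellipticity followed by a second Neumann series to push the error into $x^\infty\Psi_\phi^{-\infty}(M,E)$ — is exactly the argument carried out there. You also correctly identify the genuinely nontrivial step (showing $\Nhat(P)^{-1}$ is the normal family of an element of the calculus, which uses the suspended/parametrized calculus on the fibres) and defer it to the same reference, so nothing further is needed.
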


This implies the following Fredholm and regularity result.
\begin{theorem}[Fredholmness and regularity of fully elliptic $\phi$-operators]
\label{thm:phireg}
Let $P\in\Psi^m_\phi(M,E)$ be a fully elliptic $\phi$-operator on a $\phi$-manifold.
Then $P$ is Fredholm
as a map $P:x^cH^k_\phi(M,E,\dvol_b) \rightarrow x^{c}H^{k-m}_\phi(M,E,\dvol_b)$ for any  $c,k\in\RR$.  
If $u$ is a tempered distribution and 
$Pu \in x^\alpha H^k_\phi(M,E,\dvol_b)$, 
then $u\in x^\alpha H^{k+m}_\phi(M,E,\dvol_b)$.  If $u$ is in any weighted Sobolev space and $Pu \in \calA^{J}_\phg(M,E)$
then in fact $u \in \calA^{J}_\phg(M,E)$.  
\end{theorem}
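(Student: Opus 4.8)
The plan is to derive everything from the parametrix statement of Theorem \ref{thm: phi-parametrix} together with the mapping theorems of this subsection, in complete analogy with how the b-calculus Fredholm/regularity result follows from the b-parametrix. First I would establish Fredholmness. Let $P\in\Psi^m_\phi(M,E)$ be fully elliptic and fix $c,k\in\RR$. Theorem \ref{thm: phi-parametrix} gives $Q\in\Psi^{-m}_\phi(M,E)$ with $PQ=\Id+R_1$, $QP=\Id+R_2$ and $R_i\in x^\infty\Psi^{-\infty}_\phi(M,E)$. By Theorem \ref{thm:bdd cpctphi}(1), $P$ maps $x^cH^k_\phi\to x^cH^{k-m}_\phi$ boundedly (the index family of $P$ being $(\emptyset,\emptyset,\emptyset,0)$, for which the boundedness conditions \eqref{eqn:condition bddness}, \eqref{eqn:>0 at corner} hold with $\alpha=\beta=c$: the three $\emptyset$ entries give $\infty$, and the $\ff$-entry $0\geq 0$ is satisfied while one of $\bff,\ff$ has strict inequality since $\calI_\bff=\emptyset$), and similarly for $Q$. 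Since $R_i\in x^\infty\Psi^{-\infty}_\phi(M,E)$, all index entries are $\emptyset$ except possibly at $\ff$ where the kernel vanishes to infinite order, so the strict inequalities in \eqref{eqn:condition bddness} with $m<0$ hold everywhere; hence by Theorem \ref{thm:bdd cpctphi}(2) each $R_i$ is compact on the relevant space. Therefore $Q$ is a two-sided inverse of $P$ modulo compact operators, which gives that $P:x^cH^k_\phi\to x^cH^{k-m}_\phi$ is Fredholm.

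Next I would prove the Sobolev regularity statement. Suppose $u$ is tempered and $Pu=f\in x^\alpha H^k_\phi(M,E,\dvol_b)$. Apply $Q$: $QPu = u + R_2 u = Qf$, so $u = Qf - R_2 u$. Since $Q\in\Psi^{-m}_\phi(M,E)$, Theorem \ref{thm:bdd cpctphi}(1) gives $Qf\in x^\alpha H^{k+m}_\phi(M,E,\dvol_b)$ (again the small-calculus index family makes the weight conditions trivially satisfied with $\beta=\alpha$). For the term $R_2u$: since $u$ is tempered it lies in some weighted $\phi$-Sobolev space $x^\beta H^N_\phi$, and because $R_2\in x^\infty\Psi^{-\infty}_\phi(M,E)$ maps any such space into $x^s H^t_\phi$ for every $s,t$, in particular $R_2u\in x^\alpha H^{k+m}_\phi(M,E,\dvol_b)$. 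Hence $u\in x^\alpha H^{k+m}_\phi(M,E,\dvol_b)$, proving elliptic regularity with gain of $m$ derivatives. A small technical point to address here is that "tempered distribution" must be read as lying in \emph{some} weighted $\phi$-Sobolev space, so that $R_2u$ is defined and the bootstrap makes sense; this is exactly the hypothesis phrasing used in the statement.

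Finally, for the polyhomogeneous regularity: assume $u$ is in some weighted $\phi$-Sobolev space and $Pu=f\in\calA^J_\phg(M,E)$. Using again $u = Qf - R_2u$, I would invoke Theorem (Mapping of polyhomogeneous sections for $\phi$-operators) applied to $Q$: since $Q$ has the small-calculus index family, that theorem says $Qf\in\calA^{J'}_\phg$ with $J'=\emptyset\,\U\,(\emptyset+J)\,\U\,(0+J)=J$ — i.e. $Q$ preserves the index set, which is the assertion at the end of Theorem \ref{thm:composition phi} specialized to the small calculus, matching the phrasing "in fact $u\in\calA^J_\phg$" with no shift. For $R_2u$: as $R_2\in x^\infty\Psi^{-\infty}_\phi$, it maps any weighted Sobolev section into $\calA^{\emptyset}_\phg=\dot C^\infty$ (sections vanishing to infinite order at $\partial M$), which is absorbed into $\calA^J_\phg$ provided $J$ is nonempty, and is harmless in any case. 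Combining, $u=Qf-R_2u\in\calA^J_\phg(M,E)$.

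The genuinely substantive input is Theorem \ref{thm: phi-parametrix} (existence of the fully-elliptic parametrix with $x^\infty$-smoothing remainders); granted that, the rest is a standard and essentially formal deduction. The one place requiring a little care — and the main "obstacle," though a mild one — is bookkeeping the index-family/weight hypotheses in Theorems \ref{thm:bdd cpctphi} and the polyhomogeneous mapping theorem so that they are genuinely satisfied for the small-calculus operators $P,Q$ and the $x^\infty$-smoothing remainders $R_i$; in particular one must verify the corner condition \eqref{eqn:>0 at corner} in each application and confirm that the $\emptyset$ index entries make the strict inequalities automatic.
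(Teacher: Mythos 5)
Your proposal is correct and follows exactly the route the paper intends: the paper states this theorem as a direct consequence of the parametrix Theorem \ref{thm: phi-parametrix} combined with the boundedness/compactness result (Theorem \ref{thm:bdd cpctphi}) and the polyhomogeneous mapping theorem, which is precisely the standard deduction you carry out, including the index-family bookkeeping for the small calculus and the $x^\infty$-smoothing remainders. No gaps; the identity $u=Qf-R_2u$ with $Qf$ handled by the mapping theorems and $R_2u$ landing in $\bigcap_{s,t}x^sH^t_\phi$ (hence in every target space) is exactly what is needed.
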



\section{Structure of the Laplacian}
\label{sec:laplacian}
Assume that $M$ is a $\phi$-manifold with boundary $\del M=Y\stackrel{\phi}\to B$ and $\phi$-cusp metric $g$.
In a neighborhood of the boundary we can extend the boundary fibration map, $\phi$,
to the interior using the product structure:
\begin{equation}
\label{eq:UV}
 U \cong Y \times [0,\epsilon) \stackrel{\Phi=\phi\times \id} {\longrightarrow}
V\cong B \times [0,\epsilon)  \stackrel{\Phi_0}\to [0,\epsilon).
\end{equation}
In order to understand the structure of the Gauss-Bonnet and Hodge Laplace
operators it is useful to recall a few facts about Riemannian fibrations.


First, let us recall which structures are associated to a fibration $\phi:Y\to B$.
The tangent bundle $TY$ has the tangent spaces to the fibres as natural subbundle, called the vertical tangent subbundle. The cotangent bundle $T^*Y$ has the forms annihilating vertical vectors as natural subbundle, called the horizontal cotangent subbundle. 
Let  ${\bf y} = (y_1, \ldots, y_{b})$ be  local coordinates on $B$ and  lift them to a partial set of 
local coordinate functions on $Y$. Then the exterior derivatives $dy_1, \ldots, dy_{b}$ span the horizontal cotangent subbundle. 
Now supplement these coordinate functions to a complete set of local coordinates for $Y$ and call the additional coordinates ${\bf z}=(z_1, \ldots, z_{f})$. We may think of the $z_j$ as fibre local 
coordinates. Choosing the $z_j$ is equivalent to choosing a trivialization of the fibration $\phi$ locally on $Y$. The set of coordinates $y_1,\dots,y_b,z_1,\dots,z_f$ defines vector fields $
\del_{y_1}, \ldots, \del_{y_b}, \del_{z_1}, \ldots, \del_{z_f}
$.
 The vertical tangent subbundle is locally spanned by the vector fields $\del_{z_1}, \ldots, \del_{z_f}$. 
 
Now consider the additional structures defined by Riemannian metrics $g_Y$ and $g_B$ on $Y$ and $B$, respectively,  for which $\phi$ is a Riemannian fibration.  The metric $g_Y$ defines the horizontal tangent subbundle (or horizontal distribution) of $TY$, as the orthogonal complement of the vertical tangent subbundle. For any $p\in Y$ the differential of the fibration $\phi:Y\to B$ restricts to an isomorphism of the horizontal subspace of $T_pY$ to  $T_{\phi(p)}B$, and the assumption that $\phi$ is a Riemannian fibration for the metrics $g_Y,g_B$ means that this isomorphism is an isometry for each $p$. Equivalently, $g_Y=\phi^*g_B + h$ where the symmetric two-tensor $h$ vanishes on the horizontal subspace. Dually, the dual metric on $T^*Y$ defines the vertical cotangent subbundle as orthogonal complement of the horizontal cotangent subbundle, and the natural map $T^*Y\to T^*F$, given by restricting a form to $TF$, restricts to an isomorphism of the vertical cotangent space and $T^*F$. If $\phi$ is Riemannian then this isomorphism is an isometry.

In coordinates, the horizontal tangent subspace will usually {\em not} be spanned by the coordinate vector fields $
\del_{y_1}, \ldots, \del_{y_b}$. 
A choice of coordinates $\bfy, \bfz$  for which this does happen is  possible if and only if the horizontal distribution is integrable, that is, if
there is a local foliation by
submanifolds that are orthogonal and transversal to the fibers. By Frobenius' theorem this is equivalent to the vanishing of the curvature, denoted $R$, of the horizontal distribution.   Note that in this case,
the requirement that $\phi$ is a Riemannian fibration means 
that the horizontal submanifolds are also totally geodesic.  
In contrast, the fibres of $\phi$ will not generally be totally geodesic.  This happens if and only if their 
second fundamental forms, denoted $\mathbb{II}$, vanish.  

In coordinates $\bfy,\bfz$ the matrix of $g_Y$ has the form
\begin{equation}\label{eqn:gY matrix}
g_Y = \left(
\begin{array}{lll}
 A_{00}({\bf y}) &  A_{01}({\bf y},{\bf z})  
                                                                                                          \\
A_{10}({\bf y},{\bf z}) & A_{11}({\bf y},{\bf z})\\
\end{array}
\right)
\end{equation}
where $A_{00}$ only depends on $\bfy$ because $\phi$ is Riemannian. The submatrix $A_{00}$ encodes $g_B$, and $A_{01},A_{10},A_{11}$ encode $h$.
If the curvature of the horizontal distribution vanishes then coordinates can be chosen so that the $A_{01}$ and $A_{10}$ terms vanish, and if in addition the second fundamental form of the fibres vanishes then
the $A_{11}$ term is a function only of the $\bfz$ (fibre) variables.  So the vanishing of 
both terms means that the Riemannian fibration is locally trivial in a metric sense.

Now consider the metric $g$ in \eqref{eq1}, where we fix a product decomposition \eqref{eq:UV}. Choose local coordinates $\bfy,\bfz$ associated to $\phi$. We use the above discussion with $h$ replaced by $x^{2a}h$. This would introduce factors $x^{2a}$ in front of the $A_{01},A_{10},A_{11}$ terms of \eqref{eqn:gY matrix}. However, it is advantageous to write $g$ in terms of  the  rescaled local
coordinate vector fields
\begin{equation}
 \label{eqn:cphi vector fields}
x\del_x, \{\del_{y_i}\}_{i=1}^b, \{x^{-a}\del_{z_j}\}_{j=1}^{f}
\end{equation}
then the metric $g$
has the form
\begin{equation}\label{eqn:g matrix}
g = \left(
\begin{array}{lll}
1 & 0 & 0 \\
0 & A_{00}({\bf y}) &  x^{a}A_{01}({\bf y},{\bf z})  
                                                                                                          \\
0 & x^{a}A_{10}({\bf y},{\bf z}) & A_{11}({\bf y},{\bf z})\\
\end{array}
\right).
\end{equation}
 
The fact that this is smooth and non-degenerate up to $x=0$ motivates the introduction of the vector bundle $\cphiT M$ over $M$, defined by the requirements that the vector fields \eqref{eqn:cphi vector fields} form a local basis of sections of $\cphiT M$ over any coordinate patch in $U$ and that sections of $\cphiT M$ over $\interior M$ are smooth vector fields on $\interior M$. There is a canonical identification of $\cphiT M$ and $TM$ over $\interior M$, but not over $M$, since $x^{-a}\partial_{z_j}$ is not defined as section of $TM$ at $x=0$ and $x\partial_x$ vanishes there.
 Now \eqref{eqn:g matrix} shows that $g$, which was defined only in the interior $x>0$, defines a metric on the bundle $\cphiT M$ over all of $M$, i.e. in $x\geq 0$. 
The dual bundle $\cphiT^*M$ is locally spanned by $\frac{dx}x, dy_1,\dots,dy_b, x^a dz_1,\dots,x^a dz_f$, over $U$. 

Similarly, over $V=B\times[0,\eps)$ we have the bundle $\bT V$ locally spanned by $x\partial_x, \partial_{y_1},\dots,\partial_{y_b}$. The differential of $\Phi$ maps $\cphiT U\to\bT V$ by sending $x^{-a}\partial_{z_j}$ to zero.
From $V=B\times[0,\eps)$ we have $\bT^*V = \Span\{\tfrac{dx}x\} \oplus T^*B$.

We apply the discussion above about Riemannian fibrations to the fibration $\Phi:U\to V$ and the bundles  $\cphiT^*U$, $\bT^*V$.
The inclusion $TF\subset TU$ is replaced by $x^{-a}TF \subset \cphiT U$ with the dual restriction map $\cphiT^*U\to x^a T^*F$.
We obtain the orthogonal decomposition of vector bundles
\begin{equation}
\label{eqn:cphiT*decomp} 
 \cphiT^*U = \Span\{\tfrac{dx}x\} \oplus \calH \oplus \calV
\end{equation}
Here $\Span\{\tfrac{dx}x\} \oplus \calH$ is the image of $\Phi^*:\bT^*V\to \cphiT^*U$ and $\calV$ is its orthogonal complement with respect to the dual of the metric \eqref{eqn:g matrix}. The map $\cphiT^*U\to x^a T^*F$  restricts to an isomorphism $\calV\to x^a T^*F$, which is an isometry since $\Phi$ is Riemannian.
Under the identification of $\cphiT^* U$ and $T^*U$ over the interior, \eqref{eqn:cphiT*decomp} is the horizontal/vertical decomposition with respect to the singular metric $g$; the use of the rescaled bundles allows us to extend this decomposition smoothly to the boundary.

The Gauss-Bonnet operator $D_M$ and Hodge Laplacian $\Delta_M$ for the metric $g$ are first defined over $\interior M$ only, but when considered as operators acting on sections of the bundle $E=\Lambda\cphiT^*M$ they are elliptic $\phi$-cusp-operators on $M$, that is, 
$$D_M = x^{-a}P, \ P\in \Diff_\phi^1(M,\Lambda\cphiT^*M), \quad
\Delta_M = x^{-2a}T, \ T\in \Diff_\phi^2(M,\Lambda\cphiT^*M)$$
with $P,T$ $\phi$-elliptic.
This follows from general principles as shown in \cite{Me-icm} or from concrete calculation, see \cite{HHM}.
Concretely, this means that if we write forms on $M$ near the boundary as
\begin{equation}
\label{eqn:rescaled form} 
 \sum_{K,L} \left(a_{K,L} + \tfrac{dx}x\wedge b_{K,L}\right),
 \quad a_{K,L} = \alpha_{K,L}\, d\bfy^K \wedge (x^a d\bfz)^L,
 \quad b_{K,L} = \beta_{K,L}\,  d\bfy^K \wedge (x^a d\bfz)^L 
\end{equation}
for multiindices $K,L$
and then consider how $D_M$, $\Delta_M$ act on the coefficients $\alpha_{K,L}$, $\beta_{K,L}$, then $P,T$ are expressible in terms of the $\phi$-vector fields \eqref{degvf}, with coefficients smooth up to the boundary. 

In addition, we can write $D_M$ and $\Delta_M$ in terms of the vertical/horizontal decomposition, and this will be essential for our analysis. 
The decomposition \eqref{eqn:cphiT*decomp} of $\cphiT^*U$ induces a decomposition of $\Lambda\cphiT^* U$.
It is useful to write the exterior derivative $d_U$ on $U$ in terms of this orthogonal decomposition, rather than in coordinates $x,\bfy,\bfz$, since then its adjoint is easy to compute. We obtain (see \cite{HHM}):
\begin{equation}
\label{eqn:d_U}
\begin{aligned}
d_U &= x^{-a}d_{F} \\
&+ d_B \mid_{\calV} + \Phi^*d_B - \mathbb{II}\\
&+ x^{a}R \\
&+ (\Phi_0\circ\Phi)^*(d_x)-A. 
\end{aligned}
\end{equation}
In this decomposition, the term $d_F$ is the exterior derivative on the fibres, under the isomorphism $\calV\to x^a T^*F$. The term $d_B \mid_{\calV}$ represents the action of the 
derivatives in the $B$ directions on the  $\calV$-components of a form and the term
$\Phi^*{d_B}$ is the pullback to $U$ of the $B$ differential on $V=B\times[0,\eps)$. 
Here we identify $\calH$ with $T^*B$ via the differential of $\Phi$. The terms $\mathbb{II}$ and $R$ are the second fundamental form and curvature operators for the metric $g$. $R$ is independent of $x$ and $\II$ depends smoothly on $x$.  
The operator $d_x$ is pulled back from $[0,\eps)$ via $\Phi_0\circ\Phi:U\to [0,\eps)$ in \eqref{eq:UV}. It first acts by $x\del_x$ then
wedges with $\frac{dx}x$, so it is a b-operator.
The 0th order differential operator $A$
acts on each summand in \eqref{eqn:rescaled form} by
\begin{equation}
 \label{eqn: def A}
A(a_{K,L} + \tfrac{dx}x \wedge b_{K,L})
= a |L| \,\tfrac{dx}x \wedge a_{K,L} 
\end{equation}

Note that in terms of the decomposition of forms by subbundle degrees, we get that
$d_{F}$  increases the $\calV$ degree by 1,
the various $d_B$ and $\mathbb{II}$ terms increase the $\calH$ degree by 1
and $R$ increases the $\calH$ degree by 2 and decreases
the $\calV$ degree by 1.   
We define the following combination to simplify notation:
$$
d_{B} - \mathbb{II} := d_B \mid_{\calV} + \Phi^*d_B - \mathbb{II}.
$$

Passing to the Gauss-Bonnet operator, we fix the following notation:
\begin{gather}
D_U= d_U + d_U^*,\ D_B = (d_{B} - \mathbb{II}) + (d_{B} - \mathbb{II})^*,\ D_F = d_F + d^*_F, \label{eqn:DU DB DF}\\
        {\bf R} =R + R^*,\ xD_x = (\Phi_0\circ\Phi)^*d_x + ((\Phi_0\circ\Phi)^*d_x)^*,\ \bfA = A+A^* \notag
\end{gather}
Then we get
\begin{equation} \label{eqn:D_U final}
 D_U = x^{-a} D_F + D_B + x^a \bfR + xD_x - \bfA
\end{equation}

\subsection{The ``split" property for the Gauss-Bonnet operator and the Hodge Laplacian}
In this section, we identify some of the properties of the Gauss-Bonnet operator, $D_U$, and the 
Hodge Laplacian, $\Delta_U$, that will permit us to construct parametrices for them using a combination 
of b- and $\phi$-calculus operators, and 
that form the model for more general ``split elliptic" operators in \cite{GH3}.
When multiplied with $x^a$ and $x^{2a}$, respectively, both operators are elliptic as $\phi$-operators in the standard sense, 
so we will focus on additional properties they have in the neighbourhood $U$ of the boundary.  

We calculate the normal operator of $x^a D_U$. First, we consider the normal family of $x^a d_U$.
Recall
that the normal family of a $\phi$-operator in $\Diff_\phi^m(M,\Lambda\cphiT^*M)$ is a family of operators at the boundary
acting on $C^\infty(F_\bfy;\Lambda \cphiT^*M)$ and parametrized by $({\bf y},\tau,\eta) 
\in B \times \mathbb{R}^{b+1}$. 
Consider the bundle $\cphiT^*M$ at a boundary point $(0,\bfy,\bfz)$. Since the metric \eqref{eqn:g matrix} is block diagonal at $x=0$, the decomposition \eqref{eqn:cphiT*decomp} reduces to $\cphiT_{(0,\bfy,\bfz)}^*M = W_\bfy^* \oplus x^a T_\bfz^*F_\bfy$ where
$W_\bfy = \bT_{(0,\bfy)}V$. Since $W_\bfy$ is a vector space, we have $W_\bfy^*=T_w^*W_\bfy$ for any $w\in W_\bfy$. If we identify $x^a T_\bfz^*F_\bfy$ with $T_\bfz^*F_\bfy$ and change the order, then we get a natural identification, for any $w\in W_\bfy$,
$$ \cphiT_{(0,\bfy,\bfz)}^*M = T_{(\bfz,w)}^*(F_\bfy\times W_\bfy)
.$$
When calculating $\Nhat(x^a d_U)$ from \eqref{eqn:d_U}, the zero order terms $\II$, $R$, $A$ drop out. Using \eqref{eqn:normal family osc} one easily computes $\Nhat(x^a d_U)(\bfy,\tau,\eta)u = d_Fu + i(\tau \tfrac{dx}x + \eta\,d\bfy)\wedge u$. When interpreting $\tau,\eta$ as coordinates on $\bT^*_{(0,\bfy)}V=W_\bfy^*$, the second term is simply the symbol of the exterior derivative operator $d_{W_\bfy}$ on the vector space $W_\bfy$. Taking adjoints, where the scalar product on $W_\bfy$ is given by $g_\bfy:=\frac{dx^2}{x^2}+g_B(\bfy)$, we get
\begin{equation}\label{eqn:normal operator}
  N(x^a D_U)(\bfy) = D_{F_\bfy} + D_{W_\bfy}  
\end{equation}
acting on $\Lambda T^*(F_\bfy\times W_\bfy)$, 
which is the Gauss-Bonnet operator on $F_\bfy\times W_\bfy$ with metric $h_{|F_\bfy} + g_\bfy$. Note that this shows that the space $\RR^{b+1}$ in the local definition of the normal family is, invariantly, $W_\bfy=\bT_{(0,\bfy)}V$.

Define the vector bundle
\begin{equation}
\label{eqn:def K} 
 \calK = \text{the bundle of $F$-harmonic forms over }V.
\end{equation}
More precisely, for any $\bfy\in B$ let 
$ \tilde\calK_\bfy = \ker D_F\subset \Cinfty(F_\bfy, \Lambda x^a T^*F_\bfy)$ be the space of harmonic forms on $F_\bfy$ for the metric $g_{|F_\bfy}=x^{2a}h_{|F_\bfy}$, and let 
$ \calK_{(x,\bfy)} = (\Lambda \bT^*_{(x,\bfy)}V) \otimes 
\tilde\calK_\bfy
\subset \Cinfty(F_{(x,\bfy)},\Lambda\cphiT^*U)$, using the isometric identification of $x^a T^*F$ with $\calV$. By the Hodge theorem, $\dim \tilde\calK_\bfy$ is finite and independent of $\bfy$, so $\calK$
is a vector bundle over $V$. 
Note that we keep the $x$ factors in the bundle. In Section \ref{subsec:proof main} we explain how $\calK$ relates to the bundle $\calKbar$ mentioned in the introduction. It is important to distinguish these bundles carefully in order to get the weight conditions in the main theorems right.
We also define the infinite dimensional vector bundle
\begin{equation}\label{C}
\mathcal{C} = \mathcal{K}^\perp = \mbox{forms on }F\ \mbox{that are perpendicular to the fibre harmonic forms},
\end{equation}
with respect to the $L^2$ scalar product on the fibres.
We have $\Cinfty(U,\Lambda\cphiT^*M) = \Cinfty(V,\calK) \oplus \Cinfty(V,\calC)$.
We refer to sections of $\mathcal{C}$ as ``fibre-perpendicular forms" over $V$.

Now we can define the following families of operators pointwise over $V$:
\begin{eqnarray}
\label{eq:calK}
\Pi &=& \mbox{ orthogonal projection onto } \mathcal{K}\\
\Pi^\perp:=I-\Pi&=& \mbox{ orthogonal projection onto }\mathcal{C}.
\end{eqnarray}
Observe that the family  $\Pi(x,{\bf y})$ of projection operators is smooth in the variable $y$ 
since the family of metrics on $F$ is smooth, and constant in  $x$, since these metrics
are independent of $x$.
These families of projection operators on $C^\infty(F,\Lambda \cphiT^*U)$ together define operators, also denoted $\Pi$ and $\Piperp$, on $C^\infty(U,\Lambda \cphiT^*U)$.

Note that the normal operator
$N(x^aD_M)=D_F+D_W$ commutes with $\Pi$. In particular, it maps   
$\Cinfty(V,\calC)\to\Cinfty(V,\calC)$. Its restriction to this subspace is invertible since its square is $\Delta_F + \Delta_W\geq \min\limits_{\bfy\in B}\lambda_\bfy>0$, where $\lambda_\bfy$ is the smallest positive eigenvalue of $\Delta_{F_\bfy}$.
We can decompose the restriction $D_U$ of $D_M$ to $U$ using these projections as the sum of four pieces:
$$
(D_U)_{00}:= \Pi D_U\,\Pi, \qquad (D_U)_{01}:= \Pi D_U\,\Pi^\perp,
$$
$$
(D_U)_{10}:= \Pi^\perp D_U\,\Pi, \qquad (D_U)_{11}:= \Pi^\perp D_U\,\Pi^\perp.
$$
In order to construct the parametrix for $D_M$, we need to study the properties of each of these
four pieces.  It is convenient to organise these in a matrix:
\begin{equation} \label{eqn:D_U}
D_{U} =
\left(
\begin{array}{ll}
(D_U)_{00} & (D_U)_{01} \\
(D_U)_{10} & (D_U)_{11}
\end{array}
\right)
=: x^{-a}
\left(
\begin{array}{ll}
x^a P_{00} & x^a P_{01} \\
x^a P_{10} & P_{11}
\end{array}
\right) .
\end{equation}

In \cite{HHM}, it is proved that the operator $\frakd:=\Pi(d_B-\mathbb{II})\Pi$
is the differential on forms over $B$ with values in $\calK$, with respect to the flat connection on $\calK$ induced by the metric $g$.  When we project 
$D_U$ using $\Pi$ we obtain from \eqref{eqn:D_U final}:
\begin{equation}\label{eqn:P_00}
\Pi D_U \Pi = P_{00} = \frakd + \frakd^* + xD_x - \bfA
+x^a \Pi {\bf R} \Pi,
\end{equation}
which is  a b-operator acting on sections of $\calK$. The fact that $\frakd$ is the differential on $\calK$-valued differential forms over $B$ implies that $P_{00}$ is b-elliptic.  
The operators $P_{01}$, $P_{10}$ and $P_{11}$ have smooth coefficients up to $x=0$ and are  $\phi$-operators in a sense made precise in the next
section.

The Hodge Laplacian is the square of the Gauss-Bonnet operator, so we can understand
it also in terms of  its $\calK$ and $\calC$ components.  Using \eqref{eqn:D_U} we obtain
\[
\Delta_{U} = x^{-2a}
\left(
\begin{array}{ll}
x^{2a}(P_{00})^2+  x^{2a}P_{01}P_{10}& x^{2a} P_{01}x^{-a}P_{11} + x^{2a} P_{00} P_{01}\\
  x^aP_{11}P_{10}+x^{2a}P_{10}P_{00}& x^aP_{11}x^{-a}P_{11}+x^{2a}P_{10}P_{01}
\end{array}
\right)
\]
\begin{equation} \label{eqn:Delta_M}
=:
x^{-2a}\left(
\begin{array}{ll}
x^{2a}T_{00}& x^a T_{01}^\prime + x^{2a} T_{01}\\
  x^a T_{10}^\prime + x^{2a} T_{10}& T_{11}
\end{array}
\right).
\end{equation}
Squaring \eqref{eqn:D_U final} we see that $T_{00} = \Pi D_U^2 \Pi$  is a second order b-operator acting on sections of $\calK$, which is 
 $b$-elliptic.  
The term $x^{2a}P_{10}P_{01}$ as well as $x^a P_{11}x^{-a} -P_{11}$ vanish at the boundary, so the normal family
of the lower right term is the same as the normal family of $P_{11}^2$.

The critical difference between analysis of the Gauss-Bonnet and Hodge Laplace operators
comes from the fact that the off-diagonal terms in $x^{2a}\Delta_{M}$ generally vanish like $x^{a}$.
It turns out that our parametrix construction requires the greater order of vanishing
$x^{2a}$, so this introduces a restriction on the metrics we consider in this paper.  
Since $xD_x$ and $\bfA$ commute with $\Pi$, we have from \eqref{eqn:D_U final}
$$
P_{01}=\Pi \left[D_B + x^{a}\bfR\right]\Pi^\perp,\quad
P_{10}=\Pi^\perp \left[D_B + x^{a}\bfR\right]\Pi,
$$ 
so we see that the problematic term in each one is the one involving $D_{B}$. 
The term vanishes if this operator commutes with $\Pi$. Also note that, in this case, we
have $T_{00} = (P_{00})^2 + O(x^{2a})$. 
The commutation condition is satisfied in the setting
of $\QQ$-rank 1 locally symmetric spaces.  However, we are interested in understanding
how to study manifolds also that generalise symmetric spaces.  This question of how better
to characterise boundary fibre bundles with this property was studied by J. Mueller
in \cite{JM}, and he is taking it up again together with the authors of this paper
in current work.


\section{The extended $\phi$-calculus}
\label{sec:ext phi calculus}
In this section we consider
how to lift fiber-harmonic b-kernels to the $\phi$-double space.  The motivation
for this is as follows.
Philosophically, we will create a parametrix for the Gauss-Bonnet and Hodge
Laplace operators over a $\phi$ manifold by using the $\phi$ parametrix for the fiber-perpendicular
part of the operator and the b-parametrix for the fiber-harmonic part of the operator near the boundary,
and the standard parametrix on the interior.  If the off-diagonal terms of our operators with respect
to the fiber-harmonic, fiber-perpendicular splitting vanish in a neighborhood of the boundary, then
in fact this simple diagonal parametrix suffices as a parametrix for the operator.  However, when 
there are off-diagonal terms, we will need to improve this parametrix.  In order to do this, and 
to understand the result we obtain from this, we need to understand how the b-parametrix composes
with various parts of our $\phi$-operator. We do this by reinterpreting (lifting) the b-parametrix as a $\phi$-operator and then using the composition theorem for $\phi$-operators, Theorem \ref{thm:composition phi}. Since the fibration $\phi$ is only defined near the boundary, these considerations take place on the neighborhood $U\equiv \partial M\times
[0,\eps)$ and its projection $V=B\times[0,\eps)$. 
  
In order to analyze the lift of b-operators on $V$ to $\phi$-operators on $U$, let us recall the spaces on which the Schwartz kernels of these operators live.
The boundary fibration determines the blown-up double space where the Schwartz kernels
of $\phi$-operators on $U$ live.  We denote this space, constructed in \cite{GH1}, by $U^2_\phi$. It is constructed as an $a$-quasihomogeneous blow-up from the b-double space $U^2_b$. 
We also have the b-double space $V^2_b$, and we can do the corresponding $a$-quasihomogeneous blow-up of it, with respect to the
 trivial fibration $B\to B$ of the boundary of $V$, to obtain $V^2_\phi$.
It is useful to put the various double spaces in a commutative diagram:
\begin{equation}
\label{eq:double spaces diagram}
\xymatrix{
U^2_\phi \ar@{->}[r]^{\tilde{\tilde\Phi}^2}\ar[d]^{\beta_{\phi,Y}} & V^2_\phi \ar[d]^{\beta_{\phi,B}}\\
U^2_b \ar@{->}[r]^{\tilde\Phi^2} \ar[d]^{\beta_{b,Y}}& V^2_b \ar[d]^{\beta_{b,B}}\\
U^2 \ar@{->}[r]^{\Phi^2} & V^2\\
}
\end{equation}
The rows in this diagram represent fiber bundles with fibres $F^2$.
The columns are sequences of blow-down maps.
The diagram embodies the fact that in both the b- and $\phi$-blowups of the $U$ double space,
the fibres $F^2$ are carried along like parameters.
In the parametrix construction we construct a local parametrix for the fiber harmonic part of the operator
on $V^2_b$ (with coefficients in the harmonic form bundle).  
To combine this with the other pieces of the parametrix, we
need to lift this vertically under $\beta_{\phi,B}$, then horizontally by $\tilde{\tilde{\Phi}}^2$ (the other order would work the same, of course), to get a kernel on 
$U^2_\phi$. For notational simplicity in the rest of this section we will suppress the bundle coefficients $E$.
\medskip

We first consider the lift of kernels under the vertical map $\beta_{\phi,B}$ in \eqref{eq:double spaces diagram}.

\begin{proposition}\label{prop:lift}
If $T \in \Psi_b^{m, \calI}(V;\calK)$ then its kernel, $K_T$, lifts by the map
$\beta_{\phi,B}$ to give the kernel
of an operator $\tilde{T} \in \Psi_\phi^{m, \calJ}(V,\calK) + \Psi_\phi^{-\infty,\calJ'}(V,\calK),$
where $\calJ$, $\calJ'$ agree with $\calI$ at $\lf,\rf,\bff$ and
\begin{equation}
\label{eq:lift index sets b phi}
\calJ_{\ff} = \calI_{\bff} + a(-m), \qquad \calJ'_{\ff} = \calI_{\bff} + a((-m)\overline{\cup}(b+1) ),
\end{equation}
where $(-m)$ denotes the index set $(-m+\NN_0)\times \{0\}$ for any $m\in\RR$ and $aJ = \{(az,k):\, (z,k)\in J\}$.
\end{proposition}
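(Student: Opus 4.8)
The plan is to work locally near a boundary point of $V$ and make the blow-down map $\beta_{\phi,B}:V^2_\phi\to V^2_b$ explicit in coordinates, then track what happens to a conormal-plus-polyhomogeneous kernel under the lift. Since $V=B\times[0,\eps)$, in local coordinates $(x,\bfy)$ on the two factors the b-double space $V^2_b$ near the corner is resolved by coordinates such as $s=x/x'$ (or $\log s$), the total boundary-defining function $\rho_\bff = x+x'$ for the front face, and $\bfy,\bfy'$; the lifted diagonal is $\{s=1,\bfy=\bfy'\}$. The $\phi$-double space $V^2_\phi$ is the $a$-quasihomogeneous ($a$-parabolic) blow-up of $V^2_b$ along the lifted diagonal's intersection with $\bff$, so the new face $\ff$ is introduced with boundary-defining function $\rho_\ff$, and near $\ff$ one has projective coordinates in which $x-x'$ and $\bfy-\bfy'$ are each $O(\rho_\ff\cdot\rho_\bff^{?})$ appropriately — I would write these out carefully, recording the exponent of $x$ (equivalently of $\rho_\bff$) picked up by $\rho_\ff$ under the blow-down. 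First I would establish that $\beta_{\phi,B}^*$ carries conormal distributions of order $m$ with respect to the lifted diagonal in $V^2_b$ to conormal distributions of order $m$ with respect to the lifted diagonal in $V^2_\phi$, away from $\ff$, and that near $\ff$ the lift of the conormal singularity becomes, after the parabolic blow-up, the kernel of a $\phi$-operator of the same order $m$ — this is exactly the standard fact (as in \cite{GH1}, \cite{MaMe}) that a b-pseudodifferential kernel lifts to a $\phi$-pseudodifferential kernel.

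The heart of the computation is the index set at $\ff$. The key point is the following local structure: a kernel in $\Psi_b^{m,\calI}(V;\calK)$, near $\bff$, is (modulo the conormal singularity) a polyhomogeneous function on $V^2_b$ with leading behavior $\rho_\bff^{\calI_\bff}$ at $\bff$; but it also has a conormal singularity of order $m$ along the diagonal, which on approaching $\ff$ contributes an additional factor. Concretely, writing a conormal distribution of order $m$ in $n_B=\dim B$ variables as an oscillatory integral $\int e^{i(\bfy-\bfy')\cdot\xi}a(\dots,\xi)\,d\xi$ with $a$ a symbol of order $m$, and noting that on $V^2_\phi$ near $\ff$ we have $(\bfy-\bfy') = \rho_\ff\rho_\bff^{?}\cdot(\text{smooth})$ and similarly a rescaling $dx$ direction, the rescaled frequency variable absorbs a factor $\rho_\bff^{a}$ (this $a$ is the quasihomogeneity weight), so that integrating out the frequency produces the shift: the symbol of order $m$ integrated against $n_B+1$ frequency variables that have been rescaled by $\rho_\bff^{-a}$ (or $\rho_\ff^{-1}\rho_\bff^{-a}$, matching the normalization in the definition of $\Psi_\phi$) yields a leading power $\rho_\ff^{a\cdot(-m-(b+1))}$ relative to the natural $\phi$-density trivialization, and hence, after accounting for the half-density bundle normalization $\Omega_\bphi^{1/2}$ (which contributes exactly the $a(b+1)$ term), the index at $\ff$ is $\calI_\bff + a(-m)$ for the conormal (pseudodifferential) part. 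Here $(-m)=(-m+\NN_0)\times\{0\}$ records that the full symbol expansion in decreasing powers of the frequency produces the full string of powers $-m, -m+1, -m+2,\dots$ at $\ff$. The remainder — the part of the symbol that is Schwartz, i.e. the part of the kernel smooth across the diagonal in the interior of $\ff$ — has no conormal singularity at the lifted diagonal, so when lifted it is genuinely polyhomogeneous at $\ff$ but its leading exponent can only be controlled by the crude bound coming from the density normalization alone, giving $\calI_\bff + a((-m)\U(b+1))$; this is the $\Psi_\phi^{-\infty,\calJ'}$ piece, which is of order $-\infty$ precisely because it has no diagonal singularity. That $\calJ,\calJ'$ agree with $\calI$ at $\lf,\rf,\bff$ is immediate because $\beta_{\phi,B}$ is the identity near those faces (the parabolic blow-up only touches a neighborhood of $\ff$), and the coefficient bundle $\calK$ is simply carried along as a parameter since it is pulled back from $B$, which sits inside $V$.

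The main obstacle I expect is the bookkeeping of density factors and the precise exponent $a(-m)$ versus $a(-m-(b+1))$: one must pin down exactly which half-density bundle the kernels are valued in on each side of $\beta_{\phi,B}$ — $\Omega_b^{1/2}$ on $V^2_b$ versus $\Omega_\bphi^{1/2}$ on $V^2_\phi$, as emphasized in the paragraph defining $\Omega_\bphi$ — and compute the Jacobian factor of the blow-down in these trivializations. The stated normalization is arranged so that $\Diff_\phi\subset\Psi_\phi^{\bullet,0}$ at $\ff$, and checking that the proposition is consistent with this for $m\in-\NN_0$ and $\calI_\bff=0$ (where the lift of a b-differential operator must land in $\Psi_\phi^{\bullet,0}$ at $\ff$, i.e. $a(-m)\ni 0$, which holds since $-m\le 0$ so $0\in(-m+\NN_0)$) is the sanity check I would use to fix all signs. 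Beyond that, verifying conormality of the lifted kernel at the $\phi$-diagonal — i.e. that the parabolic blow-up does not destroy the symbol estimates — is routine and follows the template in \cite{GH1}; I would cite it rather than redo it. Everything else is tracking index sets through a coordinate change, for which the extended-union $\U$ appears exactly when the polyhomogeneous expansion from the $\rho_\bff^{\calI_\bff}$ factor and the power series $(b+1)+\NN_0$ from the Jacobian overlap in their exponents, producing the possible logarithmic terms.
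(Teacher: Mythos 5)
You should first be aware that the paper itself does not prove Proposition \ref{prop:lift}: it explicitly defers the proof to \cite{GH3} and only offers two heuristic remarks (the $-am$ shift explained via differential operators, and the origin of the logs encoded by $\U$). So there is no in-paper argument to compare against line by line. Judged on its own terms, your outline follows the expected route: work in projective coordinates, note $\beta_{\phi,B}$ is a diffeomorphism away from $\ff$ (so $\lf,\rf,\bff$ index sets are untouched), and near $\ff$ use that the order-$m$ conormal singularity in the $b+1$ transversal variables scales under the $a$-quasihomogeneous blow-up like $\rho_\ff^{a(-m-(b+1))}$, with the $\Omega_{\bphi}^{1/2}$ normalization restoring $a(b+1)$, giving $\calI_\bff+a(-m)$ for the conormal summand and a residual smoothing summand for the rest. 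That is the right skeleton, but two specific points in your write-up would derail the details.

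First, your proposed sanity check rests on a false premise: the lift of a b-differential operator of order $m>0$ does \emph{not} land in $\Psi_\phi^{\bullet,0}$ at $\ff$. As the paper's own remark after the proposition says, $x^m\partial_x^m\in\Psi_b^m$ only becomes a $\phi$-operator after multiplication by $x^{am}$; its lift has leading power $-am$ at $\ff$, exactly matching $\calJ_\ff=a(-m)$ with $\calI_\bff=0$. Since you intend to use this check ``to fix all signs,'' the check as you state it (which is only vacuously consistent at $m=0$) could fix them wrongly; the correct consistency test is that the lift of $\Diff_b^m$ sits in $x^{-am}\Diff_\phi^m$. Second, your account of where the extended union $(-m)\U(b+1)$ and its logarithms come from is not right: the formula adds $\calI_\bff$ \emph{after} the extended union, so the logs cannot arise from an overlap of $\calI_\bff$ with a Jacobian series, and the Schwartz (smooth-across-diagonal) remainder alone produces no logs at all --- its Taylor expansion only yields the string $a(b+1+\NN_0)$. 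The logs come from the $w^\alpha\log|w|$ terms in the expansion of $K_T$ at the diagonal (present whenever the homogeneity $-m-(b+1)+j$ hits $\NN_0$): under $w=x^aW$ such a term splits as $x^{a|\alpha|}P_\alpha(W)\bigl(\log|W|+a\log x\bigr)$, and one must route the $\log|W|$ half into the conormal summand (so that $\calJ_\ff$ stays log-free, as stated) while the $a\log x$ half, being smooth across the diagonal, goes into the $\Psi_\phi^{-\infty,\calJ'}$ summand at powers in $\calI_\bff+a(b+1+\NN_0)$, which is precisely where $(-m+\NN_0)$ and $(b+1+\NN_0)$ overlap. Without this splitting step your decomposition ``classical expansion vs.\ Schwartz remainder'' either puts $\log x$ terms into the first summand (contradicting the log-free $\calJ_\ff$) or fails to account for the logs in $\calJ'_\ff$; the paper's second remark states this mechanism, and your proof would need to implement it explicitly.
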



The proof of this proposition will be given in \cite{GH3}.
However, for intuition we can make the following remarks.
\begin{enumerate}
\item Note that the shift by $-a m$ in the first term at the front face in \eqref{eq:lift index sets b phi} is easily explained in the case of differential operators. For example, $x^m \partial_x^m \in \Psi_b^m$, but only after multiplying it with $x^{a m}$ does it become an operator in $\Psi_\phi^m$.
However, we will only use the proposition for lifting the parametrix, i.e. for negative $m$.
\item The second summand of $\tilde{T}$ may have log terms in its expansion at the front face even if $T$ doesn't (encoded by the sign $\overline{\cup}$ in the term $(-m)\overline{\cup}(b+1)$ in \eqref{eq:lift index sets b phi}); these come from log terms in the expansion of the kernel of $T$ at the diagonal.
\end{enumerate}
We now consider the lift of a kernel under the horizontal map $\tilde{\tilde\Phi}^2$ in \eqref{eq:double spaces diagram}, more precisely, the lift of the kernel of an operator in $\Psi^m_\phi(V,\calK)$ to a kernel on $U^2_\phi$. 
Consider kernels of operators in $\Psi^m_{\phi}(U)$.  These are distributions on $U^2_\phi$ with a conormal singularity along the lifted diagonal.  We can think of these
as kernels of the form  $K(x,x',y,y',z,z')$ which are conormal to the diagonal $x=x',y=y',z=z'$
uniformly (in an appropriate rescaled sense given by the blowup) down to $x=x'=0$.

By comparison, the kernel of an operator in $\Psi^m_\phi(V,\calK)$
is a finite sum of terms of the form
$K_B(x,x',y,y')\otimes L(z,z')$, where $K_B$ is a distribution on $V^2_\phi$ with a conormal singularity along the lifted diagonal.  Again, we can think of this as being conormal to 
the set $x=x',y=y'$ uniformly up to the boundary in the same rescaled sense as above.  The functions $L\in\calK\otimes\calK$ in this sum are smooth since harmonic forms on $F$ are smooth.  
To lift kernels of operators in  $\Psi^m_\phi(V,\calK)$ horizontally, we simply consider them as
distributions on $U^2_\phi$.
Note that these lifts are not the kernels of pseudodifferential operators.  This is because 
pseudodifferential operators are singular only along the diagonal, $x=x',y=y',z=z'$, whereas
these horizontally lifted distributions are 
singular on the larger set  $x=x',y=y'$, which we call the {\em fibre diagonal}.  
When we have lifted the kernel of an element in $\Psi^m_\phi(V,\calK)$, we will also generally
want to glue it to an interior kernel, so we will multiply the result by a smooth
cutoff function supported compactly in $U^2_\phi$ and equal to 1 near  $\bff$ and $\ff$.
We define a space of operators on $U$ in which these cutoff lifts live as follows.
\begin{definition}
 Let $\Psi^m_{\phi,F}(U)$ be the space of operators which are $\phi$-operators on $V$ valued in smoothing operators on $F$, and with kernels compactly supported in $U^2_\phi$. The kernels of operators in 
 $\Psi^m_{\phi,F}(U)$ are conormal of degree $m$ with respect to the fibre diagonal, smoothly up to the front face, and vanish to infinite order at the other faces.
\end{definition}

We make some notes about this space.
\begin{enumerate}
\item Note that this space does not require that the smoothing operators on $F$ vanish on fibre-perpendicular sections.  Thus
this space contains more than just the horizontal lifts of operators in $\Psi^m_\phi(V,\calK)$.
\item We may generalise this space in a natural way to include operators with polyhomogeneous
expansions given by an index family $\calG$ at the boundary hypersurfaces in $U^2_\phi$, which we will denote by $\Psi^{m,\calG}_{\phi,F}(U)$.
\item In Equation \eqref{eq:calK}, we defined the projection operator $\Pi$ and we used it in our decomposition of the Gauss-Bonnet operator.  Note that this is the lift of the identity operator in $\Psi^0_\phi(V,\calK)$
to an operator in $\Psi^0_{\phi,F}(U)$.  It is not a pseudifferential operator on $U$.
\end{enumerate}
Now consider the operator $\Pi^\perp$.  This is not smoothing in the $F$ factor, so it 
is not an element of $\Psi^0_{\phi,F}(U)$.  Rather, it is the difference
of the identity element in $\Psi^0_{\phi}(U)$ and $\Pi \in \Psi^0_{\phi,F}(U)$. 
Thus when we consider the compositions $\Pi x^a D_U\Pi$, $\Pi^\perp x^aD_U \Pi$ and 
$\Pi x^aD_U \Pi^\perp$, we will also arrive at such sums.  This motivates the following definition.
\begin{definition}
 Define the extended calculus as the sum of two pieces:
\begin{equation}\label{eq:full calc}
 \Psi^{m,\calG}_{\phi,\ext}(M) = \Psi^{m,\calG}_\phi (M) +\Psi^{m,\calG}_{\phi,F}(U).
 \end{equation} 
\end{definition}

As noted above, elements in this calculus need not be pseudodifferential operators since their Schwartz kernels may have singularities outside the diagonal. It should be noted, however, that this is a minor extension which only serves to formulate the parametrix construction in a simple way. The final parametrix that we consider is actually a pseudodifferential operator in the interior of $M$. This follows from ellipticity by the classicial pseudodifferential calculus.

There is no interior symbol map for operators in the extended calculus. However, the normal operator is still defined for elements in the subspace $ \Psi^m_{\phi,\ext}(M)$.
Recall that the normal operator of the $\phi$-calculus takes values in  the space  of suspended pseudodifferential $\phi$-operators, 
which we denote by $\Psi^m_{\sus-\phi}(\partial M)$. These are pseudodifferential operators on $\RR_T\times\RR_Y^b\times F$ (locally near a point of $B$) which are translation invariant in $T,Y$, so that they are given by a convolution kernel $K(T,Y,z,z')$ with a conormal  singularity at $T=0,Y=0,z=z'$, and such that $K$ decays rapidly as $(T,Y)\to\infty$. By adding terms that satisfy the same condition except that they have a conormal singularity on $T=0,Y=0$, we obtain the extended suspended calculus, denoted $\Psi^m_{\sus-\phi,\ext}(\partial M)$.

The following proposition shows that $\Psi^m_{\phi,\ext}(M)$, $\Psi^m_{\sus-\phi,\ext}(\partial M)$ behave much like the standard $\phi$-pseudodifferential calculus and suspended calculus, respectively. As before, we suppress the bundle $E$ in the notation.
\begin{proposition} \label{lem: extended phi-calc}
 \mbox{}
\begin{enumerate}
 \item $\Psi^*_{\phi,\ext}(M)$ and  $\Psi^*_{\sus-\phi,\ext}(\partial M)$ are  closed under composition.
 \item
 $ \Psi^m_{\phi,\ext}(M) \circ \Psi^{-\infty}_{\phi}(M) \subset \Psi^{-\infty}_{\phi}(M)$,
 $ \Psi^{-\infty}_{\phi}(M)\circ \Psi^m_{\phi,\ext}(M) \subset \Psi^{-\infty}_{\phi}(M)$
 and similar results hold for $\Psi^*_{\sus-\phi,\ext}$.
\item
 Let $P\in\Diff_\phi^m(M)$ be $\phi$-elliptic and such that $N(P)$ is diagonal with respect to the splitting $\Cinfty(\RR^{b+1}\times F) = \Cinfty(\RR^{b+1},\calK) \oplus \Cinfty(\RR^{b+1},\calC)$ and so that its $\Cinfty(\RR^{b+1},\calC)\to\Cinfty(\RR^{b+1},\calC)$ part is invertible. Let
\begin{equation} \label{eqn:N(P) inverse}
  B = 
\begin{cases}
 \left[N(P)_{|\Cinfty(\RR^{b+1},\calC)\to\Cinfty(\RR^{b+1},\calC)})\right]^{-1} & \text{ on }\Cinfty(\RR^{b+1},\calC)\\
 0 & \text{ on }\Cinfty(\RR^{b+1},\calK)
\end{cases}
\end{equation}
Then $B\in \Psi^{-m}_{\sus-\phi,\ext}(\partial M)$.
\item
The short exact  sequence for the normal operator of the $\phi$-calculus extends to a short exact sequence, for each $m$,
$$ 0 \longrightarrow x\Psi^m_{\phi,\ext}(M) \hookrightarrow \Psi^m_{\phi,\ext}(M)
\stackrel{N}{\longrightarrow} \Psi^m_{\sus-\phi,\ext}(\dM)
\longrightarrow 0.
$$
\item
Any $A\in\Psi^m_{\phi,\ext}(M)$ defines a bounded operator, for all $k$,
$$ H^{k+m}_\phi(M,\dvol_b) \to H^{k}_\phi(M,\dvol_b) $$
\end{enumerate}
\end{proposition}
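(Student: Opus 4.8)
The plan is to reduce every assertion to the corresponding fact for the ordinary small $\phi$-calculus --- the composition theorem \ref{thm:composition phi}, the boundedness theorem \ref{thm:bdd cpctphi}, and the normal-operator short exact sequence for $\Psi^*_\phi$ --- applied on $M$ and on $V$, supplemented by three elementary inputs: smoothing operators on the compact fibre $F$ form a two-sided ideal under composition, they map $L^2(F)$ into $H^s(F)$ for every $s$, and kernel restrictions commute with the blow-downs in \eqref{eq:double spaces diagram}. Throughout I write $A = A_\phi + A_F$ with $A_\phi\in\Psi^m_\phi(M)$ and $A_F\in\Psi^m_{\phi,F}(U)$, and I use that kernels in $\Psi^*_{\phi,F}(U)$ are compactly supported in $U^2_\phi$ and vanish to infinite order at $\lf,\rf,\bff$, so every statement about them is effectively local near $\ff$.

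For closure under composition, part (1), bilinearity leaves four cases. The case $\Psi^*_\phi\circ\Psi^*_\phi$ is Theorem \ref{thm:composition phi}. For $\Psi^*_{\phi,F}(U)\circ\Psi^*_{\phi,F}(U)$ I would use the description of these operators as $\phi$-operators on $V$ with values in $\Psi^{-\infty}(F)$: the $V$-factors compose by Theorem \ref{thm:composition phi} applied on $V$ with orders adding, the fibre factors compose inside $\Psi^{-\infty}(F)$, and compactness of support near $\ff$ persists. The two mixed cases are the crux: I would argue on the triple $\phi$-space, using that one factor is conormal to the lifted diagonal and the other to the fibre diagonal, which contains it; since the fibres $F^2$ ride along as parameters in \eqref{eq:double spaces diagram}, the pushforward defining the composite is conormal to the fibre diagonal with conormal orders adding as usual, it is fibre-smoothing because one factor is, and the infinite vanishing of the $\Psi_{\phi,F}$-factor at $\lf,\rf,\bff$ together with the index bookkeeping of Theorem \ref{thm:composition phi} places the result in $\Psi^*_{\phi,F}(U)$ modulo an interior error in $\Psi^*_\phi(M)$; the suspended statements are the same computations, simpler because of the convolution structure. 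Part (2) is then the special case in which one factor is residual: composing $\Psi^{-\infty}_\phi(M)$ on either side with $A_\phi$ gives $\Psi^{-\infty}_\phi(M)$ directly from Theorem \ref{thm:composition phi}, while composing it with $A_F$ the smoothness of the residual kernel annihilates the fibre-diagonal conormal singularity, so the product is a smoothing operator which still vanishes to infinite order at $\lf,\rf,\bff$ and is smooth at $\ff$, hence lies in $\Psi^{-\infty}_\phi(M)$.

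For the normal operator, part (4), I would define $N(A_F)$ to be the restriction of the kernel of $A_F$ to $\ff$; it is translation invariant in the $(T,Y)$ directions and fibre-smoothing, hence lies in the fibre-smoothing part $\Psi^m_{\sus-\phi,F}(\partial M)$ of the extended suspended calculus, and together with the standard normal operator on $\Psi^*_\phi(M)$ this defines $N$ on $\Psi^m_{\phi,\ext}(M)$ with values in $\Psi^m_{\sus-\phi,\ext}(\partial M)$; multiplicativity of $N$ is immediate from part (1). The sequence is exact because an arbitrary extended suspended operator is realised by the usual Taylor-coefficient extension off $\ff$, while $N(A)=0$ holds exactly when the kernel of $A$ vanishes at $\ff$, which is equivalent to $A\in x\Psi^m_{\phi,\ext}(M)$. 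For part (3), $\phi$-ellipticity of $P$ makes $N(P)\in\Psi^m_{\sus-\phi}(\partial M)$ elliptic, so it has a genuine suspended parametrix $\tilde S\in\Psi^{-m}_{\sus-\phi}(\partial M)$. Using that $N(P)$ is block diagonal (equivalently, commutes with $\Pi$), one checks that $\Pi^\perp\tilde S\Pi^\perp$ is a parametrix for $N(P)_{|\calC}$ with error in $\Psi^{-\infty}_{\sus-\phi,F}(\partial M)$; since $N(P)_{|\calC}$ is invertible by hypothesis, $B$ differs from $\Pi^\perp\tilde S\Pi^\perp$ only by this residual fibre-smoothing error, while $\Pi^\perp\tilde S\Pi^\perp$ itself lies in $\Psi^{-m}_{\sus-\phi,\ext}(\partial M)$ because $\Pi=\Id-\Pi^\perp$ is fibre-smoothing; hence $B\in\Psi^{-m}_{\sus-\phi,\ext}(\partial M)$.

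Finally, for part (5): boundedness of $A_\phi\in\Psi^m_\phi(M)=\Psi^{m,(\emptyset,\emptyset,\emptyset,0)}_\phi(M)$ is Theorem \ref{thm:bdd cpctphi}(1) with $\alpha=\beta=0$, all index conditions \eqref{eqn:condition bddness} holding and the strictness requirement \eqref{eqn:>0 at corner} met at $\bff$; for $A_F\in\Psi^m_{\phi,F}(U)$ I would regard it as a $\phi$-operator of order $m$ on $V$ with values in $\Psi^{-\infty}(F)$, use the compactly supported form of Theorem \ref{thm:bdd cpctphi} valid on $V$ to bound $H^{k+m}_\phi(V;L^2(F))\to H^k_\phi(V;L^2(F))$, and bound the fibre action through $L^2(F)\to H^s(F)$; since the $\phi$-Sobolev norm on $U$ is equivalent to a mixture of $\phi$-norms in the $V$ variables and ordinary Sobolev norms in $F$, and the compact support and infinite vanishing at $\lf,\rf,\bff$ remove all weight and corner issues, the claimed boundedness follows. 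I expect the real obstacle to be the mixed-term composition in part (1): because $\Psi_{\phi,F}$-kernels are conormal along the fibre diagonal rather than along the lifted diagonal, the standard triple-space pushforward argument for composition must be adapted with care; once part (1) is established, parts (2)--(5) are essentially bookkeeping against the known $\phi$-calculus theorems, with part (3) additionally requiring the compression step indicated above.
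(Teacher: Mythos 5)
Your treatment of parts (1), (2), (4) and (5) is consistent with the paper's own (quite terse) arguments: the paper refers part (1) back to the composition proof of \cite{GH1}, gets part (2) from $\Psi^{-\infty}_{\phi,\ext}(M)=\Psi^{-\infty}_\phi(M)$, and proves part (5) by reducing to $k=m=0$ and then freezing $(z,z')$ and using a Hilbert--Schmidt-type bound, which is close in spirit to your vector-valued formulation. The genuine problem is part (3). From $N(P)_{|\calC}\bigl(\Pi^\perp\tilde S\Pi^\perp\bigr)=\Pi^\perp+R_1$ with $R_1$ residual, all you can conclude is the identity $B=\Pi^\perp\tilde S\Pi^\perp-BR_1$; your assertion that ``$B$ differs from $\Pi^\perp\tilde S\Pi^\perp$ only by this residual fibre-smoothing error'' silently replaces $BR_1$ by $R_1$. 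At that stage $B$ is only an algebraically defined bounded inverse, not yet known to lie in any calculus, and a bounded operator composed on one side with a residual operator need not be residual --- membership of exactly such compositions in the calculus is what the proposition is asserting. Iterating the identity Neumann-style does not help, since every truncation still carries a factor $BR_1^N$ of the same untreated form.

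The paper closes precisely this gap by using both parametrix identities for $N=N(P)$: writing $NC=\Id+R$, $CN=\Id+R'$, one has $B=BNC-BR$ and also $B=CNB-R'B$; substituting the second identity into the term $BR$ gives $BR=C\Pi^\perp R-R'BR$ (using $NB=BN=\Pi^\perp$). The first term lies in the extended suspended calculus by parts (1)--(2), and the second is residual because the merely bounded operator $B$ is now sandwiched between two residual (Schwartz-kernel) suspended operators, and such sandwiches are smoothing by standard mapping arguments; hence $B=C-\Pi C+(\text{residual})\in\Psi^{-m}_{\sus-\phi,\ext}(\partial M)$. If you prefer your route, you must supply an independent proof that $B$ preserves the residual class --- for instance uniform polynomial bounds on the inverse family $\hat N(P)(\bfy,\tau,\eta)^{-1}$ acting on $\calC$ --- but that is an additional argument, not a consequence of the bare invertibility hypothesis. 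With that step repaired, the rest of your outline goes through.
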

\begin{proof}
 
\begin{enumerate}
 \item
 This follows essentially the same lines as the proof that $\Psi^*_\phi(M)$ is closed under composition, see Theorem 8 in \cite{GH1}.
 \item
 This follows from $\Psi_{\phi,\ext}^{-\infty}(M) = \Psi_\phi^{-\infty}(M)$.
 \item
 We show this by a variation on the well-known argument that the inverse of an invertible pseudodifferential operator is pseudodifferential again.  Since $P$ is $\phi$-elliptic, $N:=N(P)$ is an elliptic element of $\Psi^m_{\sus-\phi}(\partial M)$. Hence it has a parametrix $C\in \Psi^{-m}_{\sus-\phi}(\partial M)$ so that $NC=I+R$, $CN=I+R'$ with $R,R'\in \Psi^{-\infty}_{\sus-\phi}(\partial M)$. We multiply these identities from the left and right with $B$ respectively and solve for $B$. This gives $B=BNC-BR=BNC-(CNB- R'B)R$. Now use $NB=BN=\Pi^\perp=\Id-\Pi$ to obtain
 $B= C - \Pi C - S$ where $S=- C\Pi^\perp R + R'BR \in \Psi^{-\infty}_{\sus-\phi}(\partial M)$ by part 1 and standard facts, so $B\in -\Pi C + \Psi^{-m}_{\sus-\phi}(\partial M)$. 
The result follows.
\item
This is obvious; compare Lemma 4 in \cite{GH1}.
\item
 By composing with invertible fully elliptic $\phi$-operators one reduces to the case $k=m=0$. Since $\phi$-operators of order zero are bounded on $L^2$, we only need to check $L^2$-boundedness for $P\in\Psi^0_{\phi,F}(U)$. Using a partition of unity we may assume that the kernel of $P$ is supported near the fibre over a point of $B\times B$. Then by fixing $z,z'\in F$ we may regard $P$ as a family of $\phi$-operators on $V$ parametrized by $z,z'\in F$. Each of these is bounded on $L^2(V)$, uniformly in $z,z'$. The usual argument showing that Hilbert-Schmidt operators are bounded in $L^2$ then shows that  $P$ is bounded on $L^2(U)$.
\end{enumerate}
\end{proof}

Because our aim in this paper is to present a clear construction, not to obtain the sharpest possible results (for example with respect to identifying the precise index sets that arise), we will work with the following spaces of b and $\phi$-pseudodifferential operators in the construction.  In these spaces, we require only certain leading order behaviour of the kernel expansions at the various faces in the blown-up double spaces.  This makes
the construction easier to follow, but of course, it only tells us that the resulting operator has expansions at the various faces satisfying certain bounds on the exponents in the leading terms.

\begin{definition}
\label{def:Psi alpha}
 For $k,\alpha\in\RR$ define the space of b-pseudodifferential operators of order $k$ and weight $\alpha$ as
 $$ \Psi_b^{k,\alpha}(M) = \bigcup\limits_{\calE} \Psi_b^{k,\calE}(M),$$
 where the union is over those index families $\calE$ for $M^2_b$ satisfying
 $\calE_\lf > \alpha,\ \calE_\rf > -\alpha,\ \calE_\bff \geq 0$.
 Similarly, let
 $$ \Psi_\phi^{k,\alpha}(M) = \bigcup\limits_{\calE} \Psi_\phi^{k,\calE}(M),$$
 where the union is over those index families $\calE$ satisfying
 $\calE_\lf > \alpha,\ \calE_\rf > -\alpha,\ \calE_\bff \geq 0,\ \calE_\ff > 0$.
\noindent 
We define the extended version of this space, $\Psi^{k,\alpha}_{\phi,\ext}(M)$, in the analogous way.
\end{definition}
As motivation for this simplification of the full calculi, note that by Theorems \ref{thm:bdd cpct b} and \ref{thm:bdd cpctphi}, operators in $\Psi_{b/\phi}^{k,\alpha}(M)$ are bounded $x^\alpha L^2\to x^\alpha H^m_{b/\phi}$. Also, the adjoint of an operator in $\Psi_{b/\phi}^{k,\alpha}(M)$ is in $\Psi_{b/\phi}^{k,-\alpha}(M)$.

The properties of these spaces that we need in the parametrix construction are collected in the following proposition.  Because these properties are only relevant   near the boundary of $M$ when we construct the
parametrix, in this proposition and for Section 5, 
we use $\Psi_b^{k,\alpha}$ to denote $\Psi_b^{k,\alpha}(V,\calK)$ and $\Psi_\phi^{k,\alpha}$
to denote $ \Psi_\phi^{k,\alpha}(U)$. Also, write 
$\Psi_{b,\ext}^{k,\alpha}:=\Psi_{b,\ext}^{k,\alpha}(U)$, defined in an analogous way as $\Psi_{\phi,\ext}^{k,\alpha}(U)$,  and $\Psi_{\phi,\ext}^{k,\alpha}:=\Psi_{\phi,\ext}^{k,\alpha}(U)$.
\begin{proposition} \label{lem:compos lemma}
 Let $k,l,\alpha,c\in\RR$. 
 Then we have
\begin{enumerate}
 \item[a)]
 $ \Psi_b^{k,\alpha} \circ \Psi_b^{l,\alpha} \subset \Psi_b^{k+l,\alpha}$,
 \item[b)]
  $ \Psi_\phi^{k,\alpha} \circ \Psi_\phi^{l,\alpha} \subset \Psi_\phi^{k+l,\alpha}$,
 \item[c)]
 $\Psi_b^{k,\alpha}\subset \Psi_\phi^{k,\alpha}$ via lifting if $k< 0$,
 \item[d)]
 $x^{-c} \Psi_{b}^{k,\alpha} x^c = \Psi_{b}^{k,\alpha-c}$, 
  $x^{-c} \Psi_{\phi}^{k,\alpha} x^c = \Psi_{\phi}^{k,\alpha-c}$,
 \item[e)]
 $x^\infty \Psi_b^{k,\alpha} = x^\infty \Psi_\phi^{k,\alpha}$, where $x^\infty \Psi_{b/\phi}^{k,\alpha}:=\bigcap\limits_{c\in\RR}x^c \Psi_{b/\phi}^{k,\alpha}$,
 \item[f)]
 $\Psi_b^{k,\alpha}x^c \Psi_\phi^{l,\alpha} \subset \Psi_{b,\ext}^{-\infty,\alpha} + x^c \Psi^{k+l}_{b\phi}$ and\\
  $\Psi_\phi^{k,\alpha}x^c \Psi_\phi^{l,\alpha} \subset \Psi_{b,\ext}^{-\infty,\alpha} + x^c \Psi^{k+l}_{b\phi}$ for $c\geq 0$ and $k\leq 0$, where 
 $$\Psi^{m}_{b\phi}:= \bigcap_{\alpha\in\RR} \Psi^{m,\alpha}_{\phi} $$
 is the space of $\phi$-operators having empty index sets at $\lf,\rf$, index set $\geq0$ at $\bff$ and $>0$ at $\ff$.
 \item[g)] These properties are all still true if we replace $\Psi^{m,\alpha}_{\phi}$ with $\Psi^{m,\alpha}_{\phi, ext}$.
\end{enumerate}
\end{proposition}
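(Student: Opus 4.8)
The plan is to treat every item as a piece of bookkeeping that reduces to results already at hand: the composition theorem of the b‑calculus \cite{Me-aps}, Theorem \ref{thm:composition phi} for the $\phi$‑calculus, part (1) of Proposition \ref{lem: extended phi-calc} for the extended calculus, Proposition \ref{prop:lift} for the lift from b to $\phi$, and elementary arithmetic of index families. Only part (f) needs real care.

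For (a) and (b), if $P\in\Psi^{k,\calE}$ and $Q\in\Psi^{l,\calF}$ with $\calE,\calF$ of weight $\alpha$, then $\calE_\rf+\calF_\lf>-\alpha+\alpha=0$, so the composition is defined, and the composition formula expresses the index family $\calK$ of $PQ$ through extended unions of sums of the $\calE_*,\calF_*$. I would then verify directly that $\calK_\lf>\alpha$, $\calK_\rf>-\alpha$, $\calK_\bff\geq0$ (and $\calK_\ff>0$ in the $\phi$ case): every relevant sum inherits the required one‑sided bound from its summands, and the extended union preserves such bounds, the only subtlety being that in each of these unions at most one summand is merely of type $\geq\beta$ while all others are $>\beta$, so no logarithmic terms arise at $\Re z=\beta$. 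For (d), conjugation by $x^c$ multiplies the kernel on $M^2_b$ (resp. $M^2_\phi$) by $x^{-c}(x')^c$; since $x$ vanishes at $\lf,\bff$ and $x'$ at $\rf,\bff$ (and $\ff$), this lowers $\calE_\lf$ by $c$, raises $\calE_\rf$ by $c$, and leaves $\calE_\bff$ (and $\calE_\ff$) fixed, which is exactly the shift from weight $\alpha$ to weight $\alpha-c$; both inclusions follow by conjugating back. For (e), an operator in $x^\infty\Psi_b^{k,\alpha}$ has kernel vanishing to infinite order at $\lf$ and $\rf$, hence rapidly near the whole front face $\bff$, so the $a$‑quasihomogeneous blow‑up of $\bff$ producing $\ff$ introduces nothing new, and the same fact read on $M^2_\phi$ gives the reverse inclusion. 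For (c), by Proposition \ref{prop:lift} (followed by the horizontal lift to $U$, which lands in $\Psi_{\phi,F}^{k}\subset\Psi_{\phi,\ext}^{k}$) the lift of $T\in\Psi_b^{k,\calI}$ with $\calI$ of weight $\alpha$ has index family agreeing with $\calI$ at $\lf,\rf,\bff$ — so $>\alpha$, $>-\alpha$, $\geq0$ persist — and $\ff$‑index sets $\calI_\bff+a(-k)$ and $\calI_\bff+a((-k)\U(b+1))$, which for $k<0$ are $\geq a(-k)>0$, supplying the missing condition $\calI_\ff>0$; for $k=0$ one would only get $\ff\geq0$, which is why $k<0$ is imposed, and the same issue recurs in (f).

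For (f) I would decompose both factors along their diagonals. Write $P\in\Psi_b^{k,\alpha}$ as $P'+P''$, with $P'$ supported near the lifted b‑diagonal and $P''=P-P'$ smooth; since the b‑diagonal meets only $\bff$ among the boundary hypersurfaces, $P'$ has empty index sets at $\lf,\rf$ and index $\geq0$ at $\bff$ (so $P'\in\Psi_b^{k,\alpha'}$ for every $\alpha'$), while $P''\in\Psi_b^{-\infty,\alpha}$. Decompose $Q\in\Psi_\phi^{l,\alpha}$ analogously as $Q'+Q''$ with $Q'$ supported near the lifted $\phi$‑diagonal (empty at $\lf,\rf$, $\geq0$ at $\bff$, $>0$ at $\ff$) and $Q''\in\Psi_\phi^{-\infty,\alpha}$, and expand $Px^cQ$ into the four products. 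For $P'x^cQ'$: the conjugate $x^{-c}P'x^c$ still lies in $\Psi_b^{k,\alpha'}$ for all $\alpha'$ (the factor $(x'/x)^c$ is smooth and nonvanishing at $\bff$, and $P'$ vanishes to infinite order at $\lf,\rf$), so for $k\leq0$ Proposition \ref{prop:lift} lifts it to the $\phi$‑calculus with $\ff$‑index $\geq0$; Theorem \ref{thm:composition phi} then gives $x^{-c}P'x^cQ'$ with empty $\lf,\rf$‑index sets, $\bff$‑index $\geq0$, and $\ff$‑index $>0$ (the term $\calI_\bff+\calJ_\bff+A$ is $\geq A=a(b+1)>0$, and $\calI_\ff+\calJ_\ff\geq0+(>0)>0$), i.e. $x^{-c}P'x^cQ'\in\Psi^{k+l}_{b\phi}$, so $P'x^cQ'\in x^c\Psi^{k+l}_{b\phi}$. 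In each of the three remaining products at least one factor is smoothing, hence (after lifting the b‑factor to $U$ and using Proposition \ref{lem: extended phi-calc}(1)) each composite is smoothing; tracking the one‑sided weight bounds — using $c\geq0$ to keep the $\bff$‑ and $\rf$‑exponents above their thresholds — and the fact that a $\phi$‑smoothing operator of weight $\alpha$ pushes down to an element of the extended b‑calculus, each of them lies in $\Psi_{b,\ext}^{-\infty,\alpha}$. The $\Psi_\phi^{k,\alpha}x^c\Psi_\phi^{l,\alpha}$ case runs identically using (b) in place of the b‑composition, and (g) follows because Proposition \ref{lem: extended phi-calc} supplies exactly the composition, lifting and conjugation facts used above, so every argument reruns with $\Psi_\phi$ replaced by $\Psi_{\phi,\ext}$.

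The hard part will be (f): one must ensure that the smoothing remainders genuinely land in $\Psi_{b,\ext}^{-\infty,\alpha}$ with the correct weight, which is what forces the diagonal decomposition of both factors and the careful bookkeeping of the inner factor $x^c$, and this is precisely where the hypotheses $c\geq0$ and $k\leq0$ are used.
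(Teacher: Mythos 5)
Your items a)--e) and g) are essentially the paper's argument (a, b via the composition theorems with the index-set bookkeeping you describe, c via Proposition \ref{prop:lift}, d via the shift of index sets under multiplication by $x^c$ and $(x')^c$), though in e) you misidentify the faces: an element of $x^\infty\Psi_b^{k,\alpha}=\bigcap_c x^c\Psi_b^{k,\alpha}$ vanishes to infinite order at $\lf$ and $\bff$ (not at $\lf$ and $\rf$ --- left multiplication by $x^c$ shifts only the $\lf$ and $\bff$ index sets), and ``vanishing at $\lf$ and $\rf$ hence rapidly near $\bff$'' is a non sequitur; the conclusion you need (infinite-order vanishing at the front face, so that the blow-up creating $\ff$ is invisible) is correct but comes directly from the definition, which is the paper's argument.

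The genuine gap is in f). You split $Q\in\Psi_\phi^{l,\alpha}$ as $Q'+Q''$ with $Q'$ supported near the lifted $\phi$-diagonal only, so $Q''\in\Psi_\phi^{-\infty,\alpha}$ is smoothing but still carries a nontrivial index set at $\ff$. You then place all three cross terms in $\Psi_{b,\ext}^{-\infty,\alpha}$ on the strength of the claim that ``a $\phi$-smoothing operator of weight $\alpha$ pushes down to an element of the extended b-calculus.'' That claim is false: a kernel that is polyhomogeneous on $M^2_\phi$ with nontrivial expansion at $\ff$ does not become polyhomogeneous on $M^2_b$ after collapsing $\ff$ --- this only happens when the kernel vanishes to infinite order at $\ff$, which is exactly the content of e), and $\Psi_{b,\ext}^{-\infty,\alpha}$ here means b-type kernels (possibly valued in smoothing operators on $F$), not $\phi$-type ones. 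Consequently terms such as $P'x^cQ''$ or $P''x^cQ''$ only land in spaces like $x^c\Psi_\phi^{-\infty,\alpha}$, which is not contained in $\Psi_{b,\ext}^{-\infty,\alpha}$. The missing idea is the paper's decomposition \eqref{eqn: phi decomp}: cut off near $\Diag_\phi\cup\ff$ (not just near the diagonal), so that the complementary piece vanishes identically near $\ff$ and is an honest b-smoothing operator of weight $\alpha$, while the piece retaining $\ff$-behaviour lies in $\Psi^l_{b\phi}$; after commuting $x^c$ through $\Psi_{b\phi}$ (here $c\geq 0$ and $k\leq 0$ are used) that piece contributes to the summand $x^c\Psi^{k+l}_{b\phi}$, not to $\Psi_{b,\ext}^{-\infty,\alpha}$. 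Your treatment of the main term $P'x^cQ'$ is fine, and your cross terms can be repaired by inserting this extra $\ff$-cutoff splitting (equivalently, by quoting \eqref{eqn: phi decomp}) and allowing them to contribute to both summands of the target space; as written, the containment you assert for them does not hold.
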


Concerning f), note that $\Psi_b^{k,\alpha}x^c \Psi_\phi^{l,\alpha} \subset \Psi^{k+l,\alpha}_{\phi}$ follows from b) and c), but the factor $x^c$ is lost in this crude argument; f) is a refinement needed below.
The main point of the proposition is to record the behavior of kernels and compositions as b- or $\phi$-kernels and their expansions at the left and right faces. We will not make use of the information about their orders.

\begin{proof}
Parts a) and b) follow immediately from the  composition theorems for the b- and $\phi$-calculus. Part c) follows from the lifting theorem. For d) and e) observe that multiplying a b-operator by $x^c$ from the left or right means multiplying its Schwartz kernel by $x^c$ or $(x')^c$, respectively. Therefore, its index sets at $\lf,\bff$, respectively at $\bff,\rf$, are simply shifted up by $c$. This is similar for $\phi$-operators. This implies d), and e) follows from the fact that functions vanishing to infinite order at a front face of a blow-up are still smooth after collapsing that front face.
 
Now we prove f). Let $S\in \Psi_\phi^{l,\alpha}$. By introducing a cutoff function supported in a neighborhood of $\Diag_\phi\cup\ff$ and equal to one in a smaller such neighborhood, where $\Diag_\phi$ is the diagonal in $M^2_\phi$, we can write $S=S_1+S_2$ where $S_1\in\Psi^{-\infty,\alpha}_b$ (since a $\phi$-operator whose kernel vanishes near $\ff$ is a b-operator) and $S_2\in\Psi^l_{b\phi}$. Thus
\begin{equation}
 \label{eqn: phi decomp}
\Psi_\phi^{l,\alpha}\subset \Psi^{-\infty,\alpha}_{b,\ext} + \Psi^l_{b}\,,
\end{equation}
so $\Psi_b^{k,\alpha}x^c \Psi_\phi^{l,\alpha}\subset \Psi_{b,\ext}^{-\infty,\alpha} + \Psi_{b,\ext}^{k,\alpha}x^c\Psi^l_{b\phi}$ by a). 
Next, from $x^c\Psi^l_{b\phi}=\Psi^l_{b\phi}x^c$ and b), c) we get
$\Psi_b^{k,\alpha}x^c \Psi_{b\phi}^{l}\subset \Psi_\phi^{k+l,\alpha}x^c$.
Finally, using \eqref{eqn: phi decomp} again, and $c\geq 0$, we see that this is contained in $\Psi_{b,\ext}^{-\infty,\alpha} + \Psi_{b\phi}^{k+l}x^c$. This gives the first claim in f). For the second claim use \eqref{eqn: phi decomp} to obtain
$\Psi_\phi^{k,\alpha} \subset \Psi_b^{-\infty,\alpha} + \Psi^k_{b\phi}$.  The first term is handled by the first containment in f), and the second yields $\Psi^k_{b\phi}x^c\Psi^{l,\alpha}_\phi = x^c\Psi^k_{b\phi}\Psi^{l,\alpha}_\phi\subset x^c\Psi^{k+l,\alpha}_\phi$ by b). Property g) is proved by essentially the same arguments
as a) through f).
\end{proof}

\section{Statement and proof of main parametrix theorem}
\label{sec:parametrix}
Before we construct parametrices, we should consider for a moment
what sort of parametrices we want.  
Recall that a right parametrix for $P$ is an operator $Q$ so that $PQ=\Id -R$, where the remainder $R$ is `good' in a suitable sense, and similar is true for a left parametrix. Parametrices satisfying more stringent notions of `good' are harder to construct in general than those satisfying weaker notions, but also lead to stronger corollaries. For example, the classical pseudodifferential parametrix construction for an elliptic operator, $P$, over a smooth manifold yields a remainder $R$ which is smoothing.  This may be used to prove that solutions of $Pu=0$ are smooth in the interior of $M$. If $M$ is a compact manifold without boundary, such a remainder is also a compact operator, which shows that $P$ is a Fredholm operator between Sobolev spaces. However, in our setting a smoothing remainder need not be compact on the natural weighted Sobolev spaces on which it is bounded.  Thus in order to get a Fredholm result we need to improve the parametrix. If we would further like to get full asymptotic expansions at the boundary for solutions $u$ of $Pu=0$, this requires an even more refined parametrix. More precisely, in the $\phi$-calculus setting this requires the Schwartz kernel of the left remainder $R$ to be smooth in the interior of $M^2_\phi$ and 
to vanish to infinite order at the faces $\bff$, $\ff$ and also at $\rf$. The expansion of $R$ at $\lf$ then
determines the expansion of $u$.  This is the type of parametrix we will
construct here.
As usual we will first construct a right parametrix, where the remainder has the same properties with $\lf$ and $\rf$ interchanged, and then use adjoints to get a left parametrix.

The regularity and vanishing properties of the remainder kernel correspond to the two parts of the parametrix construction, which, however, we carry out in the opposite order.
\begin{enumerate}
\item[I.]
First we find a right parametrix `at the boundary'. This yields a right parametrix $Q_\partial$ with a remainder $R_\partial$ that vanishes to infinite order at $\bff$, $\ff$
and $\lf$.
\item[II.]
We combine $Q_\partial$ with a `small' right $\phi$-parametrix of $P$, obtained by inverting the principal symbol, to improve the error to be smoothing in addition. This comes at the cost of losing some control of the $\Pi,\Piperp$ splitting, but this is no problem for the Fredholm and regularity results we want.
\end{enumerate}

\subsection{Statement of Main Theorem}

Recall from equation \eqref{eqn:D_U} that the Gauss-Bonnet operator for a $\phi$-cusp metric near the boundary may be written as 
$$
D_U= x^{-a}
\left(
\begin{array}{ll}
x^a P_{00} & x^a P_{01} \\
x^a P_{10} & P_{11}
\end{array}
\right) 
$$
with respect to the decomposition $\Cinfty(V,\calK)\oplus\Cinfty(V,\calC)$ into the space of fibre-harmonic forms and its orthogonal complement. Also, recall from \eqref{eqn:Delta_M} and the discussion after it that the Hodge Laplacian may be written, under the condition that $[D_B,\Pi]=0$, as
$$
\Delta_U =
x^{-2a}\left(
\begin{array}{ll}
x^{2a}T_{00}& x^{2a} T_{01}\\
x^{2a} T_{10}& T_{11}
\end{array}
\right).
$$
If we can obtain a parametrix for $x^aD_U$ and for $x^{2a}\Delta_U$, we will be able to get parametrices for $D_U$ and $\Delta_U$ by composing with appropriate powers of $x$.  Therefore, in this section we will consider the more general situation that 
$P$ is a $\Pi$-split operator, which we define as follows:
\begin{definition}
\label{splitprop}
Let $P$ be a $\phi$-elliptic differential $\phi$-operator of order $m$
\[
P:C^\infty(M,E)\to C^\infty(M,E),
\]
$\calK$ be a finite dimensional sub-bundle of the infinite dimensional $C^\infty(F,E)$ bundle 
over $V$, with orthogonal complement $\calC$, and let $\Pi,\Piperp$
be the projections on the first and second factor in the orthogonal decomposition $\Cinfty(U,E)=\Cinfty(V,C^\infty(F,E))=\Cinfty(V,\calK)\oplus\Cinfty(V,\calC)$.
Write $P$ in terms of this decomposition as
\begin{equation}
\label{eqn:P} 
 P = 
\begin{pmatrix}
 x^{am} P_{00} & x^{am} P_{01} \\
 x^{am} P_{10} & P_{11}
\end{pmatrix},
\end{equation}
where $x^{am}P_{00} = \Pi P \Pi$ etc.
We say that $P$ is {\em $\Pi$-split} if the following conditions are satisfied.
\begin{enumerate}
\item $P_{00}\in \Diff^m_b(V,\calK)$ is an elliptic b-operator.
\item There is $\Ptilde\in \Diff_\phi^m(U,E)$ such that $P_{01}=\Pi \Ptilde \Piperp$ and $P_{10} = \Piperp\Ptilde\Pi$.
\item $N(P_{11})$ is invertible on forms over $\mathbb{R}^{b+1}$ with values
in $\calC$.
\end{enumerate}
\end{definition}
\noindent
Note that condition 2 implies that $N(P)$ is diagonal with respect to the decomposition \(\Cinfty(\RR^{b+1}\times F,E)=\Cinfty(\RR^{b+1},\calK)\oplus\Cinfty(\RR^{b+1},\calC)\), so condition 3 makes sense. Also, the conditions imply that $P_{01}, P_{10}, P_{11}\in \Psi^m_{\phi,\ext} (U)$.

As discussed in Sections \ref{sec:laplacian} and \ref{sec:ext phi calculus}, these conditions are satisfied by $x^aD_M$ and $x^{2a}\Delta_M$.

As in the b-calculus, we will not generally be able to construct a parametrix for such operators that gives Fredholm results for all weighted $L^2$ spaces.
Rather, we will find a family of parametrices corresponding to a dense set of weights.  In fact, these weights come from the same source as the weights in the b setting, as will become clear in the 
construction.  For any
admissible weight $\alpha$, the corresponding parametrix will allow us to prove that 
$$ P: x^\alpha H^m_\split \to x^\alpha L^2 $$
is Fredholm. The Sobolev space $H^m_\split$ that appears on the left side of this map 
is introduced in Section \ref{subsec:split sob}.

We will prove the following Theorem. We use the short notation
$\Psi_b^{k,\alpha}$, $\Psi_{\phi,\ext}^{k,\alpha}$ and $\Psi_{b\phi,\ext}^{k}$  for  $\Psi_b^{k,\alpha}(V,\calK)$,
$\Psi_{\phi,\ext}^{k,\alpha}(U,E)$ and $\Psi_{b\phi,\ext}^{k}(U,E)$, respectively. These spaces were defined in Definition \ref{def:Psi alpha} and Proposition \ref{lem:compos lemma}. 
\begin{theorem}
 \label{thm:right split param}
 Let $P$ be a $\Pi$-split differential operator of order $m$ as in Definition \ref{splitprop}. Let $\alpha\in\RR$ and assume
\begin{equation}
 \label{eqn:weight condition}
  \alpha-am\not\in -\imspec (P_{00})\,.
\end{equation}
Then there are right and left parametrices which in the interior of $M$ are pseudodifferential operators of order $-m$ and over $U$ are in the spaces
 $$ Q_{r,\alpha}, Q_{l,\alpha} \in
\begin{pmatrix}
 x^{-am} \Psi_b^{-m,\alpha} + \Psi_{b\phi,\ext}^{-m}& 0 \\
 0 & \Psi_{\phi,\ext}^{-m} + \Psi_{\phi,\ext}^{-m,\alpha}x^{am}
\end{pmatrix} +
\begin{pmatrix}
 0 & x^{-am}\Psi^{-m,\alpha}_{\phi,\ext}  x^{am} 
 \\
\Psi^{-m,\alpha}_{\phi,\ext} 
 & 0
\end{pmatrix} $$
with 
$$PQ_{r,\alpha} = \Id - R_{r,\alpha},\quad Q_{l,\alpha}P= \Id - R_{l,\alpha},$$
where the remainders are smooth in the interior and over $U$ satisfy
\begin{align}
 R_{r,\alpha} &\in x^\infty \Psi_\phi^{-\infty,\alpha}(\Pi + x^{am}\Piperp), \label{eqn:Rright} \\
 R_{l,\alpha} &\in  (x^{-am}\Pi + \Piperp)\Psi_\phi^{-\infty,\alpha}x^\infty.\label{eqn:Rleft}
\end{align}
\end{theorem}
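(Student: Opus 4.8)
The construction takes place near the boundary, on $U$; in the interior $P$ is a classically elliptic differential operator and one glues in a classical pseudodifferential parametrix by a partition of unity, so that $Q_{r,\alpha},Q_{l,\alpha}$ are pseudodifferential of order $-m$ there (see also the remark after Definition~\ref{splitprop}). Near the boundary the plan is: \emph{(Stage I)} build a block-diagonal parametrix from the b-parametrix of $P_{00}$ and an extended-$\phi$ parametrix of $P_{11}$; \emph{(Stage II)} remove the off-diagonal coupling, which carries a gain of $x^{am}$, by a Schur-complement/Neumann iteration that is asymptotically (Borel) summed, obtaining a boundary parametrix $Q_\partial$ with $PQ_\partial=\Id-R_\partial$, $R_\partial$ vanishing to infinite order at $\bff,\ff,\lf$; \emph{(Stage III)} compose $R_\partial$ with a small symbolic $\phi$-parametrix of $P$ to upgrade the remainder to a genuinely smoothing operator, giving \eqref{eqn:Rright}. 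All membership bookkeeping is carried out with Proposition~\ref{lem:compos lemma}, using in particular $\Psi_b^{-m,\alpha}x^{am}=x^{am}\Psi_b^{-m,\alpha-am}$ and $x^{-am}\Psi_{\phi,\ext}^{-m,\alpha}x^{am}=\Psi_{\phi,\ext}^{-m,\alpha-am}$. The left parametrix is obtained by the mirror construction.

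\textbf{Stage I.}
Since $P_{00}\in\Diff^m_b(V,\calK)$ is b-elliptic and $\alpha-am\notin-\imspec(P_{00})$ by \eqref{eqn:weight condition}, Theorem~\ref{thm: b-parametrix} gives right and left b-parametrices $Q_{b,r},Q_{b,l}\in\Psi_b^{-m,\alpha-am}(V,\calK)$ with remainders in $x^\infty\Psi_b^{-\infty,\alpha-am}(V,\calK)$. Put $Q_{00}:=Q_{b,r}x^{-am}\in x^{-am}\Psi_b^{-m,\alpha}$, so $(x^{am}P_{00})Q_{00}=\Pi-R_{00}$ with $R_{00}\in x^\infty\Psi_b^{-\infty,\alpha}=x^\infty\Psi_\phi^{-\infty,\alpha}$ (Proposition~\ref{lem:compos lemma}(e)). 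For the lower--right block, $P_{11}\in\Psi^m_{\phi,\ext}(U)$ is $\phi$-elliptic (the $\Pi$-components of $P$ are fibrewise smoothing, hence invisible to the $\phi$-principal symbol), and $N(P_{11})$ is invertible on $\calC$-valued forms by condition~3 of Definition~\ref{splitprop}; thus $P_{11}$ is fully elliptic relative to $\calC$. Running the parametrix construction of Theorem~\ref{thm: phi-parametrix} inside the extended calculus --- inverting the $\phi$-symbol, then using Proposition~\ref{lem: extended phi-calc}(3),(4) to invert the normal operator on $\calC$ and iterate --- yields $Q_{11,r}\in\Psi^{-m}_{\phi,\ext}(U)$ with $P_{11}Q_{11,r}=\Piperp-R_{11,r}$, $R_{11,r}\in x^\infty\Psi^{-\infty}_{\phi}\,\Piperp$. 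With $Q^{(1)}:=\operatorname{diag}(Q_{00},Q_{11,r})$,
\[
PQ^{(1)}=\Id-\begin{pmatrix} R_{00} & -x^{am}P_{01}Q_{11,r}\\ -x^{am}P_{10}Q_{00} & R_{11,r}\end{pmatrix},
\]
where the diagonal remainders are already smoothing and vanish to infinite order at every face; the off-diagonal entries are the obstruction.

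\textbf{Stage II.}
Set $S_{01}:=-Q_{00}\,(x^{am}P_{01}Q_{11,r})$ and $S_{10}:=-Q_{11,r}\,(x^{am}P_{10}Q_{00})$. Using $(x^{am}P_{00})Q_{00}=\Pi-R_{00}$ and $P_{11}Q_{11,r}=\Piperp-R_{11,r}$, one checks that $Q^{(1)}+\bigl(\begin{smallmatrix}0&S_{01}\\ S_{10}&0\end{smallmatrix}\bigr)$ has off-diagonal remainder $R_{00}\,x^{am}P_{01}Q_{11,r}$ and $R_{11,r}\,x^{am}P_{10}Q_{00}$ --- both good, being a smoothing face-vanishing factor times a bounded one --- at the cost of a new \emph{diagonal} remainder of the form $R_{00}+O(x^{am})$, $R_{11,r}+O(x^{am})$ with the error terms in $x^{am}\Psi^{0,\alpha}_{\phi,\ext}$, i.e. carrying one extra power $x^{am}$. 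Correcting the diagonal again with $Q^{(1)}$ reproduces off-diagonal errors with a further factor $x^{am}$, and so on; since each cycle gains a fixed positive power of $x$, the resulting Neumann series may be summed asymptotically, producing $Q_\partial$ with $PQ_\partial=\Id-R_\partial$, where $R_\partial$ vanishes to infinite order at $\bff$, $\ff$, $\lf$. Tracking every factor through Proposition~\ref{lem:compos lemma}(a)--(g) (and Proposition~\ref{prop:lift} for the lift from $V$ to $U$), the blocks of $Q_\partial$ land in the stated spaces: the $(0,0)$ entry in $x^{-am}\Psi_b^{-m,\alpha}+\Psi_{b\phi,\ext}^{-m}$, the $(1,1)$ entry in $\Psi_{\phi,\ext}^{-m}+\Psi_{\phi,\ext}^{-m,\alpha}x^{am}$, the $(0,1)$ entry in $x^{-am}\Psi_{\phi,\ext}^{-m,\alpha}x^{am}$, and the $(1,0)$ entry in $\Psi_{\phi,\ext}^{-m,\alpha}$.

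\textbf{Stage III, the left parametrix, and the main obstacle.}
Since $P$ is $\phi$-elliptic, Theorem~\ref{thm: phi-parametrix} gives $Q_\sigma\in\Psi_\phi^{-m}(M)$ with $PQ_\sigma=\Id-R_\sigma$, $R_\sigma\in\Psi_\phi^{-\infty}(M)$. Set $Q_{r,\alpha}:=Q_\partial+Q_\sigma R_\partial$; then $PQ_{r,\alpha}=\Id-R_\sigma R_\partial$, and applying the composition theorem for the $\phi$-calculus to $R_\sigma$ (empty index sets at $\lf,\rf,\bff$) and $R_\partial$ (empty index sets at $\lf,\bff,\ff$) shows $R_\sigma R_\partial$ is smoothing, vanishes to infinite order at $\bff,\ff,\lf$, and retains at $\rf$ the decay of $R_\partial$, which on the $\Piperp$-column carries an extra $x^{am}$; hence $R_{r,\alpha}:=R_\sigma R_\partial\in x^\infty\Psi_\phi^{-\infty,\alpha}(\Pi+x^{am}\Piperp)$, which is \eqref{eqn:Rright}; the extra term $Q_\sigma R_\partial$ contributes only pieces in $\Psi_{b\phi,\ext}^{-m}$, $\Psi_{\phi,\ext}^{-m}$, already present in the displayed spaces. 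The left parametrix $Q_{l,\alpha}$ is produced by repeating Stages I--III with $Q_{b,l}$ in place of $Q_{b,r}$ and a left extended-$\phi$ parametrix of $P_{11}$ --- legitimate under the same condition \eqref{eqn:weight condition} --- equivalently by applying the right construction to the formal adjoint (again $\Pi$-split, with $P_{00}^*$ b-elliptic and the same set of critical weights) and transposing, yielding \eqref{eqn:Rleft}. The main obstacle is Stage~II together with this bookkeeping: the coupling blocks $x^{am}P_{01},x^{am}P_{10}$ provide only the finite decay $x^{am}$, so they cannot be removed by a single Schur correction but must be iterated away and Borel summed, and at each step one must verify, with Proposition~\ref{lem:compos lemma} and the lifting Proposition~\ref{prop:lift}, that the b-calculus on $(V,\calK)$ and the $\phi$-calculus on $U$ interlock so as to keep every piece in the narrow list of spaces in the statement; the weight restriction \eqref{eqn:weight condition} is dictated precisely by the b-parametrix step for $P_{00}$.
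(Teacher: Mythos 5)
Your Stages I and III, and the use of the adjoint for the left parametrix, follow the paper's route, but Stage II has a genuine gap. You assert that iterating the off-diagonal (Schur/Neumann) correction and summing asymptotically yields a boundary remainder $R_\partial$ vanishing to infinite order at $\bff$, $\ff$ \emph{and} $\lf$. That is false as stated: after one off-diagonal correction the remainder contains the square of the off-diagonal error (the term $(P_oQ_d)^2$ in the paper's notation, i.e.\ products such as $x^{am}P_{01}Q_{11,r}\cdot x^{am}P_{10}Q_{00}$), whose entries lie in $x^{am}\Psi^{0,\alpha}_{\phi}$ resp.\ $\Psi^{0,\alpha}_{\phi}x^{am}$, see \eqref{eqn:R2 terms}. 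Its index set at $\lf$ is finite (only $>\alpha+am$, resp.\ $>\alpha$), not empty. By the composition formula of Theorem \ref{thm:composition phi}, the $\lf$ index set of any power $R^N$ still contains $\calI_\lf$ of $R$, and the $\bff$ and $\ff$ index sets of $R^N$ always contain $\calI_\lf+\calI_\rf$, a set that does not move with $N$. So the iteration gains nothing at $\lf$ and only a bounded amount at $\bff,\ff$: each ``cycle'' does \emph{not} gain a fixed positive power of $x$ at these faces, and the asymptotic sum does not produce the claimed infinite-order vanishing.

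This is exactly why the paper interposes a further step between your Stages II and III: using the b-structure of $P_{00}$ one constructs formal solutions of $Pu=f$ matching the expansion of the remainder at $\lf$ and subtracts the resulting kernel $Q'$ supported near $\lf$ (Proposition \ref{lem:solve at lf}). Only after this does the remainder $R_3$ have empty index set at $\lf$ (with sets $\geq am$ at $\bff$ and $>am$ at $\ff$), and only then does taking powers help, since $R_3^{2N}\in x^{(N-1)am}\Psi_R$ with $\Psi_R$ as in \eqref{eqn:R3 space}, so the Neumann series can be summed asymptotically and $R_\partial$ acquires infinite-order vanishing at $\lf,\bff,\ff$ together with the extra $x^{am}$ on the $\Piperp$ column. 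This formal-solution step is a substantive ingredient (an indicial-equation argument, not bookkeeping via Proposition \ref{lem:compos lemma}), and without it your Stage III cannot deliver \eqref{eqn:Rright}--\eqref{eqn:Rleft}, nor the regularity results that read off expansions of solutions from the $\lf$ behaviour of $R_{l,\alpha}$. A smaller inaccuracy: for the left parametrix via the adjoint one has $\imspec((P_{00})^*)=-\imspec(P_{00})$, so the right construction for $P^*$ must be run at the weight $am-\alpha$ (which \eqref{eqn:weight condition} does permit), rather than at ``the same set of critical weights'' at weight $\alpha$.
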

The notation in \eqref{eqn:Rleft} means that $R_{l,\alpha}$ is a sum of the form $x^{-am}\Pi R' x^\infty + \Piperp R'' x^\infty$
with $R',R'' \in\Psi_\phi^{-\infty,\alpha}$, and the notation in \eqref{eqn:Rright} is the same.
It will be clear from the construction that the index sets for the various parts of the parametrix could be described more precisely if required and will be computed from $\Specb (P_{00})$.

\subsection{Construction of right parametrix}
In order to make this section easier to read, we leave out the subscript $\ext$ in the notation for spaces of pseudodifferential operators. Thus, $\Psi^*_\phi$ is to be understood as $\Psi^*_{\phi,\ext}$ throughout this section.

Assume that $P$ is $\Pi$-split.  Decompose $P$ into its diagonal and off-diagonal parts: 
$$P=P_d + P_o,\quad P_d = \begin{pmatrix}
 x^{am} P_{00}  & 0 \\
 0 & P_{11}
\end{pmatrix},\quad
P_o = 
\begin{pmatrix}
 0 & x^{am} P_{01} \\
 x^{am} P_{10} & 0
 \end{pmatrix}.
 $$
The construction now proceeds in five steps.  The first four steps are needed to construct
the boundary parametrix, and in the fifth step, we combine this with the interior parametrix
to make the remainder smoothing.
\begin{itemize}
 \item[Step 1:]
First, we find a parametrix, $Q_d$, for the diagonal part, $P_d$, of $P$. Since $P_{00}$ is a b-elliptic b-operator, it has a parametrix in the b-calculus, which we set as the upper left piece in $Q_d$. For the lower right part we use the invertibility of $N(P_{11})$ and Proposition \ref{lem: extended phi-calc}.3. The diagonal remainder, $R_d:= P_dQ_d-I$ vanishes to infinite order in $x$ at all boundary faces, but
$Q_d$ is not a parametrix for the full operator $P$.
 \item[Step 2:]
 Using the off-diagonal terms of $P$ we modify $Q_d$ to $Q_2$ so that $PQ_2$ yields a remainder $R_2$ that vanishes to some order at $\bff$ and $\ff$.
 \item[Step 3:]
By constructing formal solutions of the equation $Pu=f$, we construct a right parametrix $Q_3$ with a remainder $R_3$ that vanishes to infinite order at $\lf$, but still has non-trivial index sets at $\bff$ and $\ff$.
However, the remainder does vanish to some order at these faces, so we can correct this
in the next step.
\item[Step 4:]
We use a Neumann series to get a right parametrix $Q_\partial$ with a remainder $R_\partial$ that vanishes to infinite order at $\bff$, $\ff$ and $\lf$.
This finishes the construction of the boundary parametrix.
\item[Step 5:]
By combining $Q_\partial$ with a `small' right $\phi$-parametrix of $P$, we get a remainder that in addition is smoothing.
\end{itemize}
We now provide the details.  
\begin{itemize}
 \item[Step 1:]
Here we use the assumption that the normal operator of $P$ is diagonal. We only use the $\Piperp$ part of the normal operator, since in its $\Pi$ part the normal operator forgets  important information. For example, if the $\Pi$ part is of the form $x^{1+a}\partial_x + x^ah$ (with $h$ a zero order operator) then the normal operator will be $x^{1+a}\partial_x$, while when we  write this as $x^a(x\partial_x + h)$,  then we see that $h$ is included in the indicial operator of $P_{00}=x\partial_x + h$, so it is essential for the correct b-parametrix.
Therefore, in the $\Pi$-part we use the b-parametrix. 

In order to construct the right parametrix, it is useful to write $\Pi P\Pi$ as $P_{00}'x^{am}$ instead of $x^{am}P_{00}$.
Thus, define the b-operator  
$$P'_{00}=x^{am}P_{00}x^{-am}$$
and choose a right parametrix $Q_{00}$ for $P'_{00}$ in the b-calculus, corresponding to the given weight $\alpha$, see Theorem \ref{thm: b-parametrix}. This is possible under the condition
$ \alpha \not\in  -\imspec P_{00}' $. Since $-\imspec x^{am}P_{00}x^{-am} = -\imspec P_{00} + am$ this condition is equivalent to condition \eqref{eqn:weight condition}. We obtain
$$ x^{am}P_{00} x^{-am}Q_{00} = \Id - R_{00},
\quad Q_{00}\in \Psi^{-m,\alpha}_b,\
R_{00}\in x^\infty\Psi^{0,\alpha}_b.
$$
We now invert $N(P_{11})$ on $\calC$ and define its inverse $B$ as in \eqref{eqn:N(P) inverse}. Then by Proposition \ref{lem: extended phi-calc}.3, $B\in\Psi^{-m}_{\sus-\phi}(\partial M)$, and by 
Proposition \ref{lem: extended phi-calc}.4 we may extend $B$ to the interior of $M^2_\phi$  and obtain a parametrix $Q_{11}'$:
$$ P_{11} Q_{11}' = \Id - R_{11}',\quad
Q_{11}'\in \Psi^{-m}_{\phi},\
R_{11}'\in x\Psi^0_{\phi}.$$
Using Proposition \ref{lem: extended phi-calc}.5 and the standard Neumann series argument this can be improved to 
$$ P_{11} Q_{11} = \Id - R_{11},\quad
Q_{11}\in \Psi^{-m}_{\phi},\
R_{11}\in x^\infty\Psi^0_{\phi}.$$

Now we have $P_dQ_d = \Id - R_d$ with
\begin{equation}
\label{eqn:Qd,Rd1} 
 Q_d = 
\begin{pmatrix}
 x^{-am}Q_{00} & 0 \\
 0 & Q_{11}
\end{pmatrix}
\in
\begin{pmatrix}
 x^{-am} \Psi_b^{-m,\alpha} & 0 \\
 0 & \Psi_{\phi}^{-m}
\end{pmatrix}
,
\end{equation}
\begin{equation}
\label{eqn:Qd,Rd} 
R_d = 
\begin{pmatrix}
 R_{00} & 0 \\
 0 & R_{11}
\end{pmatrix}
\in
\begin{pmatrix}
x^\infty
 \Psi^{0,\alpha}_\phi & 0 \\
 0 & x^\infty
\Psi^0_{\phi}
\end{pmatrix}.
\end{equation}

 \item[Step 2:]
In this step we make use of the fact that the $x^{am}$ factors in $P_o$  mean than it is of lower order near the boundary than $P_d$. The idea comes from the formal inversion 
\[P^{-1} = P_d^{-1}(\Id + P_oP_d^{-1})^{-1} = P_d^{-1} - P_d^{-1}P_oP_d^{-1} + \dots.
\] 
Since $P_d$ may not be invertible we use its parametrix $Q_d$ instead. The first two terms suffice for our purposes. Thus, we set
$$ Q_2 = Q_d + Q_o,\quad \mbox{where} \quad Q_o := -Q_dP_o Q_d. $$
Then $P_d Q_d = \Id - R_d$ implies 
$$PQ_2 = 
\Id - R_2,\quad \mbox{where } R_2=R_d - R_o + (P_o Q_d)^2,\quad \mbox{and }R_o:=R_dP_o Q_d.$$
To analyze $Q_2$ and $R_2$ we use Proposition \ref{lem:compos lemma} to get
$ \Psi^m_{\phi}\Psi^{-m,\alpha}_b \subset \Psi^{0,\alpha}_{\phi}$.  Together with $x^{am}\Psi_{\phi}^0=\Psi_{\phi}^0 x^{am}$ we get 
\begin{equation}
\label{eqn:PoQd} 
 P_oQ_d \in 
\begin{pmatrix}
 0 & \Psi_{\phi}^0 x^{am} \\
\Psi^{0,\alpha}_{\phi} & 0
\end{pmatrix}.
\end{equation}
This gives
\begin{equation}
\label{eqn:Q_o} 
 Q_o = Q_d P_o Q_d \in 
\begin{pmatrix}
 0 & x^{-am}\Psi^{-m,\alpha}_{\phi}  x^{am} 
 \\
\Psi^{-m,\alpha}_{\phi}
 & 0
\end{pmatrix}.
\end{equation}
Now we analyze $R_2$.
From \eqref{eqn:Qd,Rd} and \eqref{eqn:PoQd} we get
\begin{equation}
\label{eqn:R2 terms} 
R_o= R_d P_oQ_d \in 
\begin{pmatrix}
 0 &  x^\infty
\Psi^{0,\alpha}_\phi x^{am} \\
x^\infty
\Psi^{0,\alpha}_{\phi} 
 & 0
\end{pmatrix},\quad
(P_oQ_d)^2 \in  
\begin{pmatrix}
x^{am}\Psi^{0,\alpha}_{\phi} 
 & 0 \\ 0 & \Psi^{0,\alpha}_{\phi} x^{am}
\end{pmatrix}.
\end{equation}
The overall $x^{am}$ factor in $(P_oQ_d)^2$ (as opposed to $x^{am}$ in one entry only in \eqref{eqn:PoQd}) is the reason that $Q_2$ is better than $Q_d$. Note that this factor increases the order of vanishing of the Schwartz kernel at $\bff$ and $\ff$, no matter whether it is on the left or on the right. However, on the left it increases the order at $\lf$ but not at $\rf$, while on the right it increases the order at $\rf$ but not at $\lf$.  Thus in the second expression in equation \eqref{eqn:R2 terms}, we cannot move both factors of $x^{am}$ to the same side of the pseudodifferential term.

 \item[Step 3:]
 The parts $R_d$ and $R_dP_oQ_d$ in $R_2$ already have the infinite order vanishing at $\lf,\bff,\ff$ that we need. However, the term $(P_oQ_d)^2$ does not, see \eqref{eqn:R2 terms}. The standard procedure to improve remainder terms is a Neumann series argument as in Step 4, which replaces a remainder $R$ by arbitrarily high powers $R^N$. However,  as can be seen from the Composition Theorem \ref{thm:composition phi}, taking powers of $R$ does not improve order of vanishing at $\lf$, and also does not improve order of vanishing at $\bff,\ff$ unless $R$ already vanishes at $\lf$ to arbitrarily high order. 
There is a standard remedy for these problems. By constructing formal solutions $u$ of the equation $Pu=f$, where $f$ arises from the $\lf$ expansion of the remainder, combining these solutions into a Schwartz kernel and subtracting the resulting operator from the parametrix, one gets an improved remainder, which vanishes to infinite order at $\lf$. Taking powers of this remainder then yields arbitrarily high orders of vanishing at $\bff$ and $\ff$ also. 
This standard procedure is explained in \cite[Section 5.20]{Me-aps}. The setting there is for a b-elliptic b-operator $P$, and we need to adapt this to our situation. We obtain the following proposition. Its proof will appear in \cite{GH3}.
\begin{proposition}
 \label{lem:solve at lf}
 Let $P$ be $\Pi$-split. Suppose the Schwartz kernel of $R'\in\Psi_b^{-\infty,\alpha}$ is supported near $\lf$ and satisfies, with index sets listed in the order $\lf,\bff$,  
$$
\begin{array}{llll}
 \Pi R' & \text{ has index sets } & >\alpha + am, & \geq am, \\
\Piperp R' & \text{ has index sets } &  >\alpha , & \geq am.
\end{array}
$$ 
Then there are operators $Q'\in\Psi_b^{-\infty,\alpha}$, $R''\in\Psi_b^{-\infty,\infty}$ with Schwartz kernels supported near $\lf$ and satisfying
$$
\begin{array}{llll}
 \Pi Q' & \text{ has index sets } & >\alpha, & \geq 0, \\
\Piperp Q' & \text{ has index sets } &  >\alpha , & \geq am, \\
 R'' & \text{ has index sets } & \emptyset, &\geq am,
\end{array}
$$
$$ PQ'=R'+R''.$$
\end{proposition}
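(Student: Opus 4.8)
\emph{Strategy.} The plan is to adapt to the split setting the classical ``solving at a boundary hypersurface'' construction of \cite[Section 5.20]{Me-aps}; the new feature compared with Melrose's b-elliptic situation is that one must run two indicial mechanisms at once — the b-type one attached to $P_{00}$ on the $\Pi$-component, and the $\phi$-type one, namely invertibility of $N(P_{11})$, on the $\Piperp$-component — and keep the off-diagonal blocks of $P$, which couple the two, under control. First I would reduce to a one-variable problem: since $R'$ is smoothing and supported near $\lf$, the $\phi$-differential operator $P$ acts on its Schwartz kernel by differentiation in the left variable only, so it suffices to solve $Pu=f$ in the left variable with the right variable carried along as a smooth parameter, where $f$ ranges over the terms in the polyhomogeneous expansion of $R'$ at $\lf$. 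Writing $R'\sim\sum_{(z,k)}\rho_\lf^{z}(\log\rho_\lf)^{k}R'_{z,k}$ with each $R'_{z,k}$ smooth up to $\bff$, supported near $\lf$, and vanishing rapidly at $\rf$ — and recalling that by hypothesis $\Re z>\alpha+am$ on the $\Pi$-part, $\Re z>\alpha$ on the $\Piperp$-part, with $\bff$-index $\geq am$ in both cases — I would remove these terms one $\lf$-order at a time, sum the resulting formal series by Borel's lemma to define $Q'$, and then $R'':=PQ'-R'$ has empty index set at $\lf$ by construction.

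\emph{The indicial step.} The heart of the matter is the order-by-order step, which I would organise along $\Cinfty(U,E)=\Cinfty(V,\calK)\oplus\Cinfty(V,\calC)$. On the $\Piperp$-component the leading part of $P$ at $\lf$ is $P_{11}$, whose normal operator is invertible on $\calC$ by condition (3) of Definition \ref{splitprop}; this is the $\phi$-analogue of an invertible indicial family, and it lets one solve $P_{11}w=\rho_\lf^{z}(\log\rho_\lf)^{k}g$ modulo a term of strictly higher $\lf$-order, with $w$ of the \emph{same} leading $\lf$-order and the same $\bff$-order as the right side, introducing no new logarithms and no weight shift; this accounts for the asserted indices $>\alpha$ at $\lf$ and $\geq am$ at $\bff$ for $\Piperp Q'$. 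On the $\Pi$-component, writing $x^{am}P_{00}=P_{00}'x^{am}$ with $P_{00}'=x^{am}P_{00}x^{-am}$ b-elliptic (as in Step~1), solving reduces to inverting the indicial family of $P_{00}'$ at the value of $\lambda$ attached to the relevant weight: when that is invertible the solution acquires $\lf$-index $>\alpha$ (using $\Re z>\alpha+am$, so that after dividing off the $x^{am}$ one lands strictly above $\alpha$) and $\bff$-index $\geq0$ (using that $\Pi R'$ has $\bff$-index $\geq am$, with no log at weight $am$, which descends to no log at weight $0$). When the relevant $\lambda$ lies in $\Specb(P_{00}')$ I would use the standard remedy of a term with one higher power of $\log\rho_\lf$, the $\lambda$-derivative of the indicial family supplying the correction; this is the only source of logarithms in $Q'$ and is exactly why its precise index sets are governed by $\Specb(P_{00})$.

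\emph{Coupling and assembly.} Next I would deal with the off-diagonal coupling. Because the blocks $P_{01},P_{10}$ carry the extra factor $x^{am}$ (condition (2) of Definition \ref{splitprop}), the coupling contributions $x^{am}P_{01}w$ and $x^{am}P_{10}u_{\calK}$ are of strictly higher $\lf$-order than the quantities solved for at each stage; hence the two decoupled solution operators, iterated, generate errors whose $\lf$-order strictly increases at every step — in integer increments, together with the finitely many extra log powers described above as the weight crosses $-\imspec(P_{00})+am$ — so the formal construction at $\lf$ closes. Borel summation over the $\lf$-expansion, carried out keeping everything smooth up to $\bff$ and rapidly decaying at $\rf$, then yields $Q'\in\Psi_b^{-\infty,\alpha}$, supported near $\lf$, with the stated index sets; $R''=PQ'-R'$ vanishes to infinite order at $\lf$, hence $R''\in\Psi_b^{-\infty,\infty}$, and tracking the $\bff$-order through the construction — the right sides never have $\bff$-index below $am$, and neither $P$ nor the b-smoothing structure lowers it — gives $R''$ with $\bff$-index $\geq am$.

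\emph{Main obstacle.} The hard part will be the $\bff$-bookkeeping in the presence of the coupling: one has to check that mixing the b-indicial mechanism on $\calK$ with the $N(P_{11})$-mechanism on $\calC$ degrades neither the $\lf$- nor the $\bff$-orders, and that the Borel-summed kernel genuinely lies in $\Psi_b^{-\infty,\alpha}$ with no log term at $\bff$-weight $0$. It is at this point that the power $x^{am}$ — rather than merely $x^{a}$ — on the off-diagonal blocks of a $\Pi$-split operator is indispensable, which is also why the hypothesis $[D_B,\Pi]=0$ is forced on us in Theorem~\ref{Thm2}. Once this bookkeeping is in place the remainder is the standard formal-solution argument of \cite[Section 5.20]{Me-aps}; the full details will appear in \cite{GH3}.
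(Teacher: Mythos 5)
Your strategy coincides with the one the paper indicates: the paper does not actually prove Proposition \ref{lem:solve at lf} (it defers the details to \cite{GH3}), describing it only as the formal-solution-at-$\lf$ procedure of \cite[Section 5.20]{Me-aps} adapted to the split setting, and that is exactly what you carry out --- b-indicial solving for the $\Pi$-part via the conjugated operator $x^{am}P_{00}x^{-am}$, inversion on $\calC$ of the fibre model supplied by condition (3) of Definition \ref{splitprop} for the $\Piperp$-part, and control of the coupling through the off-diagonal $x^{am}$ factors, followed by Borel summation. Your index bookkeeping also matches the stated conclusion (dividing by $x^{am}$ lowers both the $\lf$- and $\bff$-orders of the $\Pi$-component, giving $\Pi Q'$ with $\lf$-index $>\alpha$ and $\bff$-index $\geq 0$ while $\Piperp Q'$ keeps $\bff$-index $\geq am$); the only minor imprecisions are that the $\lf$-coefficients $R'_{z,k}$ are polyhomogeneous with index set $\geq am$ at $\bff$ rather than smooth, and that what is used at $\lf$ is invertibility of the fibre operator $\Nhat(P_{11})(\bfy,0,0)$ (the $\tau=\eta=0$ value of the normal family), which your appeal to $N(P_{11})$ of course provides.
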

We apply the proposition with $R'=\chi R_2$ (understood in the sense of kernels), where $\chi$ is a smooth cut-off function on the blown up double space supported and equal to one near $\lf$. Then \eqref{eqn:Qd,Rd}, \eqref{eqn:R2 terms} imply that $R'$ satisfies the conditions in Proposition \ref{lem:solve at lf}. 
With $Q',R''$ from the proposition we set
$$ Q_3 = Q_2 + Q',\quad R_3 = (1-\chi)R_2 - R'' .$$
Then $PQ_3 = \Id - R_3$ since $PQ_2 = \Id - R_2$, $PQ' = \chi R_2 + R''$.
Note from \eqref{eqn:Qd,Rd}, \eqref{eqn:R2 terms} and Proposition \ref{lem:solve at lf} that $R_3$ is a $\phi$-operator of weight $\alpha$ having index sets $\emptyset$ at $\lf$, $\geq am$ at $\bff$ and $> am$ at $\ff$, and so that $R_3\Piperp$ has an extra $am$ degrees of vanishing at $\rf$.  Equivalently
\begin{equation}
\label{eqn:R3 space} 
 R_3 \in \Psi_R :=
\begin{pmatrix}
 x^{am}\Psi^{0,\alpha}_{\phi,\lf}
& \Psi^{0,\alpha}_{\phi,\lf} x^{am}
\\
\Psi^{0,\alpha}_{\phi,\lf}
&
\Psi^{0,\alpha}_{\phi,\lf} x^{am}
\end{pmatrix},
\end{equation}
where $\Psi^{0,\alpha}_{\phi,\lf}$ is the space of those elements of 
$\Psi^{0,\alpha}_{\phi}$ that vanish to infinite order at $\lf$. 

\item[Step 4:]
In order to improve the remainder further, we use the standard Neumann series argument, i.e. we multiply $PQ_3=\Id-R_3$ by $\Id + R_3 + R_3^2 + \dots$ from the right and sum asymptotically. 
We need to check that the asymptotic sum makes sense.  Clearly $\Psi^{0,\alpha}_{\phi,\lf}x^{am}\subset x^{am}
\Psi^{0,\alpha}_{\phi,\lf}$, and this implies 
that $R_3^2 \in \begin{pmatrix}
 x^{am}\Psi^{0,\alpha}_{\phi,\lf}
& x^{am}\Psi^{0,\alpha}_{\phi,\lf} x^{am}
\\
x^{am}\Psi^{0,\alpha}_{\phi,\lf}
&
\Psi^{0,\alpha}_{\phi,\lf} x^{am}
\end{pmatrix}
$
and then inductively that $R^{2N}_3 \in x^{(N-1)am} \Psi_R$ for all $N\in\NN$.
Therefore, the index sets at $\bff$, $\ff$ of $R^N_3$ wander off to infinity as $N\to\infty$. 
In addition, the index sets of $R^N_3$ at $\rf$ stabilize by a simple argument as in \cite[Section 5.22]{Me-aps}, hence the asymptotic sum $R_3'=\sum_{N=1}^\infty R_3^N$ makes sense and yields an operator in $\Psi_R$, see \eqref{eqn:R3 space}. Setting $Q_\partial = Q_3R_3'$ we obtain
$PQ_\partial = \Id - R_\partial$ where $R_\partial$ is in the intersection of all the spaces $x^{(N-1)am} \Psi_R$, hence 
\begin{equation}
 \label{eqn:R partial}
R_\partial\in x^\infty \Psi^{0,\alpha}_\phi
(\Pi + x^{am}\Piperp).
\end{equation}

We now analyze $Q_\partial=Q_3R_3'$. Recall 
$Q_3=Q_d+Q_o+Q'$ and write $R_3' = \Rtilde_d + \Rtilde_o$, where
$\Rtilde_d$ is diagonal and 
$\Rtilde_o$ is off-diagonal with respect to the $\Pi,\Piperp$-splitting. We analyze each term in the product $Q_3R_3'$ separately, using \eqref{eqn:Qd,Rd}, \eqref{eqn:Q_o} and \eqref{eqn:R3 space}. The $\Pi\,\Pi$ parts of $Q_d\Rtilde_d$, $Q_o\Rtilde_o$ are in $x^{-am}\Psi^{-m,\alpha}_b x^{am}\Psi^{0,\alpha}_{\phi,\lf}$ and $x^{-am}\Psi_\phi^{-m,\alpha}x^{am}\Psi^{0,\alpha}_{\phi,\lf}$ respectively, hence by Proposition \ref{lem:compos lemma}f) are contained in
$x^{-am}\Psi^{-\infty,\alpha}_b + \Psi^{-m}_{b\phi}$. 
The $\Piperp\Piperp$ terms are unproblematic, and we get
\begin{equation}
\label{eqn:Q3R3 diag}
 Q_d \Rtilde_d + Q_o\Rtilde_o \in 
\begin{pmatrix}
 x^{-am}\Psi^{-\infty,\alpha}_b + \Psi^{-m}_{b\phi} & 0 \\
 0 & \Psi^{-m,\alpha}_{\phi} x^{am}
\end{pmatrix} .
\end{equation}
Next, by a similar argument, we get
\begin{equation}
\label{eqn:Q3R3 offdiag} 
 Q_d\Rtilde_o + Q_o \Rtilde_d \in 
\begin{pmatrix}
 0 & x^{-am} \Psi_\phi^{-m,\alpha} x^{am} \\
 \Psi_\phi^{-m,\alpha} & 0
\end{pmatrix}.
\end{equation}
Finally, Proposition \ref{lem:solve at lf} gives in particular
\begin{equation}
 \label{eqn:Q'}
 Q' \in 
\begin{pmatrix}
 \Psi_{b,\rf}^{-\infty,\alpha} & \Psi_{b,\rf}^{-\infty,\alpha}\\
 \Psi_{b,\rf}^{-\infty,\alpha} x^{am} & \Psi_{b,\rf}^{-\infty,\alpha} x^{am}
\end{pmatrix},
\end{equation}
where $\Psi_{\phi,\rf}^{-\infty,\alpha}$ are those operators in $\Psi_{\phi}^{-\infty,\alpha}$ whose kernels vanish at $\rf$ to infinite order. Then with
$R_3'\in \Psi_R$ from Equation \ref{eqn:R3 space} we get
\begin{equation}
\label{eqn:Q'R3} 
 Q'R_3' \in 
\begin{pmatrix}
\Psi_{\phi}^{-\infty,\alpha} & \Psi_{\phi}^{-\infty,\alpha} x^{am} \\
\Psi_{\phi}^{-\infty,\alpha} & \Psi_{\phi}^{-\infty,\alpha} x^{am} 
\end{pmatrix}
=\Psi^{-\infty,\alpha}_\phi (\Pi + x^{am}\Piperp).
\end{equation}

\item[Step 5:]
Now we improve the parametrix further using a small $\phi$-parametrix of $P$. That is, since $P$ is $\phi$-elliptic,  we may find a parametrix by  Theorem \ref{thm: phi-parametrix}
$$ P Q^\sigma = \Id -R^\sigma,\quad Q^\sigma \in \Psi^{-m}_\phi,
\quad R^\sigma \in \Psi^{-\infty}_\phi. $$

Then we define
 $$Q_r=Q_\partial + Q^\sigma R_\partial,\quad R_r= R^\sigma R_\partial. $$
 so $PQ_r = \Id-R_r$. 
\end{itemize}

\subsection{Construction of left parametrix and proof of Theorem \ref{thm:right split param}}
The rough idea for constructing a left parametrix is as follows. Let $P^*$ be the formal adjoint of $P$. Then
$P^*$ is also $\Pi$-split, so we may construct a right parametrix $Q_r'$ as above but for $P^*$, and obtain
a remainder $R_r'$, so $P^*Q_r' = \Id - R_r'$. Taking adjoints we obtain
$Q_l P = \Id - R_l$ where $Q_l = (Q_r')^*$, $R_l = (R_r')^*$.

To show that the formal adjoint of a $\Pi$-split operator is again $\Pi$-split, consider what taking the formal adjoint of $P$ means. It means that the Schwartz kernel of $P$ is reflected across the diagonal, so coordinates are switched $x\leftrightarrow x', y\leftrightarrow y', z\leftrightarrow z'$. Since $\Pi$ is an orthogonal projection, the $2\times 2$ matrices representing the $\Pi,\Piperp$ decomposition are simply flipped, and adjoints of its parts are taken.

The $\Pi\,\Pi$  part of $P^*$ is  $(P_{00})^* x^{am}$. Now $\imspec (P_{00})^* = -\imspec(P_{00})$, so by assumption \eqref{eqn:weight condition} we have $am-\alpha\not\in-\imspec(P_{00})^*$. Therefore, there is a right b-parametrix 
for $(P_{00})^*$ for the weight $am-\alpha$, and we can construct a right parametrix $Q_r'$ as above for $P^*$ and for this weight. Let $Q_l = (Q_r')^*$.

\begin{proof}[Proof of Theorem \ref{thm:right split param}]
 We collect all terms in $Q_r=Q_\partial + Q^\sigma R_\partial $. First, $Q_\partial$ is the sum
 of $Q_d$ in \eqref{eqn:Qd,Rd}, $Q_o$ in \eqref{eqn:Q_o} and the terms in \eqref{eqn:Q3R3 diag}, \eqref{eqn:Q3R3 offdiag}, \eqref{eqn:Q'} and \eqref{eqn:Q'R3}, and each one is in the space given for $Q_D+Q_O$. Here the $\Pi\,\Pi$ term in $Q'R_3$ is split up as in \eqref{eqn: phi decomp}. Then from \eqref{eqn:R partial} we have
 $Q^\sigma R_\partial \in R_\partial\in x^\infty \Psi^{-m,\alpha}_\phi
(\Pi + x^{am}\Piperp)$, which is also of the form given in the theorem. 
In the same way we get the claim for $R_r=R^\sigma R_\partial$.
Looking at the left parametrix constructed above we see
$$(x^{-am}\Psi_b^{-m,am-\alpha})^*=\Psi_b^{-m,\alpha-am}x^{-am} = x^{-am} \Psi_b^{-m,\alpha},
$$
and similar calculations show that the left parametrix we obtain is precisely of the same type as the right parametrix. 
The left remainder is in
$$ R_l \in \left(x^\infty\Psi_\phi^{-\infty,am-\alpha} 
(\Pi + x^{am}\Piperp)\right)^*
=
(\Pi + x^{am}\Piperp)\Psi_\phi^{-\infty,\alpha-am}
x^\infty
=
(x^{-am}\Pi + \Piperp)
\Psi_\phi^{-\infty,\alpha}
x^\infty.
$$ 
\end{proof}

\section{Proofs of Fredholm and regularity results}
\label{sec:proofs main thms}
In this section, we first define the Sobolev spaces which reflect the different regularity of fibre harmonic and fibre perpendicular forms, and then prove Fredholm and regularity results.

\subsection{Split Sobolev spaces} \label{subsec:split sob}
When dealing with the Gauss-Bonnet and Hodge Laplace operators, we naturally encounter Sobolev spaces which encode different sorts of regularity in the fibre-harmonic and fibre-perpendicular parts of sections near
the boundary. 
Recall the definition of the bundle $\calK$ in Section \ref{sec:laplacian}.
\begin{definition}
  Let $M$ be a $\phi$-manifold. For any $k\in\RR$ define
  $$\Pi H^k_b(M,E,\dvol_b)= \{\mu\in H_{loc}^k(\Mint,E,\dvol_b) :\, \mu_{|U}\in H^k_b(V,\calK,\dvol_b)\}$$
  and let
  $ \Pi^\perp H^k_\phi (U,E,\dvol_b)$ denote the image of the map $\Pi^\perp$.
Then define the {\em split Sobolev spaces} for any $k\in\RR$ by
$$ H^k_\split (M,E,\dvol_b) = x^{-ak}\Pi H^k_b(M,E,\dvol_b) + \chi\,\Pi^\perp H^k_\phi (U,E,\dvol_b),$$
where $\chi\in C_0^\infty(U)$ equals one near $\partial M$.
\end{definition}
In the sequel we will often write $H^k_\phi$, $H^k_\split$ for $H^k_\phi (M,E,\dvol_b)$, $H^k_\split (M,E,\dvol_b)$ etc.
That these are the natural Sobolev spaces for $\Pi$-split operators can be seen in the special case where the operator $P$ in \eqref{eqn:P} is diagonal, i.e. $P_{01}=P_{10}=0$. Then $P_{00}$ maps between b-Sobolev spaces and $P_{11}$ between $\phi$-Sobolev spaces.
\noindent
We can notice three important things about these spaces.
\begin{enumerate}
\item These spaces are complete inner product spaces under an inner product that depends
on the metric on $M$ and $E$, on the volume form $\dvol_b$ and on the cut-off function $\chi$.  The topology induced by this inner product is as usual independent of these choices. 
\item $H^0_\split = L^2$ and there are continuous inclusions
\begin{equation}
\label{eqn:split containments}
\begin{aligned}
 H^k_\phi  \subset H^k_\split \subset x^{-ak}H^k_b & \text{ for } k \geq 0.
\end{aligned}
 \end{equation}
This follows from $H^k_\phi\subset x^{-ak}H^k_b$, which is obvious from the definitions, by applying it to the $\Pi$ and $\Piperp$ parts separately.
\item Unlike usual Sobolev spaces, the split Sobolev spaces do not form a scale of spaces, i.e.
\begin{equation}\label{eqn:Hsplit not a scale}
 H^{k+1}_\split  \not\subset H^k_\split 
\end{equation}
in general (that is, if $\calK\neq 0$) since higher regularity comes with potentially greater rate of blow-up of the 
fibre harmonic part of a section. This is the reason that our Fredholm theorem below holds only as a map $H^m_\split\to L^2$ and not, as might be expected, $H^{m+k}_\split\to H^k_\split$ for all $k\in\RR$.
\end{enumerate}

\subsection{Proof of main theorems}
\label{subsec:proof main}
We first prove  Fredholm and  regularity theorems for general $\Pi$-split operators $P$. Then we apply the results to deduce Theorems \ref{Thm1} and \ref{Thm2}.
\begin{theorem}
\label{thm:fredholm}
 Let $P$ be a $\Pi$-split operator of order $m$. Let $\alpha\in\RR$. If $\alpha-am\not\in -\imspec(P_{00})$ then $P$ is Fredholm as an operator $x^\alpha H^m_\split(M,E,\dvol_b) \to x^\alpha L^2(M,E,\dvol_b)$.
\\
Also, if $\alpha\not\in -\imspec(P_{00})$ then $P$ is Fredholm $x^\alpha L^2(M,E,\dvol_b)
\to x^\alpha H^{-m}_\split(M,E,\dvol_b)$.
\end{theorem}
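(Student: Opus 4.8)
The plan is to derive Fredholmness of $P$ from the parametrices of Theorem~\ref{thm:right split param} by the standard criterion: a bounded operator $P\colon X\to Y$ is Fredholm as soon as there are bounded operators $Q_r,Q_l\colon Y\to X$ and operators $R_r,R_l$ with $PQ_r=\Id_Y-R_r$, $Q_lP=\Id_X-R_l$, and $R_r$, $R_l$ compact on $Y$ and $X$ respectively (then $\ker P$ and $\operatorname{coker}P$ are finite dimensional and $\ran P$ is closed). Here $X=x^\alpha H^m_\split(M,E,\dvol_b)$, $Y=x^\alpha L^2(M,E,\dvol_b)$, and $Q_r=Q_{r,\alpha}$, $Q_l=Q_{l,\alpha}$, $R_r=R_{r,\alpha}$, $R_l=R_{l,\alpha}$ are the operators produced by Theorem~\ref{thm:right split param} for the weight $\alpha$; this is legitimate precisely because $\alpha-am\notin-\imspec(P_{00})$. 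So everything reduces to checking boundedness of $P$, $Q_{r,\alpha}$, $Q_{l,\alpha}$ between these two spaces and compactness of $R_{r,\alpha}$, $R_{l,\alpha}$.

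The boundedness statements are where most of the work sits, but each is a routine verification against the block structure of Definition~\ref{splitprop} and the decomposition $H^m_\split=x^{-am}\Pi H^m_b\oplus\Piperp H^m_\phi$ near $\partial M$. For $P\colon X\to Y$ one examines the four entries of \eqref{eqn:P}: in $\Pi P\Pi=x^{am}P_{00}$ the prefactor $x^{am}$ exactly compensates the $x^{-am}$ in $x^{-am}\Pi H^m_b$ because $P_{00}\in\Diff^m_b(V,\calK)$ preserves b-Sobolev weights; $x^{am}P_{01}$ and $P_{11}$ send $\Piperp H^m_\phi$ into $x^{am}L^2$ and $L^2$; and for $x^{am}P_{10}$ acting on $x^{-am}\Pi H^m_b$ one uses that a $\phi$-differential operator applied to a fibre-harmonic b-Sobolev section picks up a factor $x^a$ from each of the vector fields $x^{1+a}\partial_x=x^a(x\partial_x)$, $x^a\partial_y$, so again the image lies in $x^\alpha L^2$ — this is the sense in which $H^m_\split$ is the natural domain. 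The boundedness $Q_{r,\alpha},Q_{l,\alpha}\colon Y\to X$ is checked the same way, entry by entry, from the explicit memberships listed in Theorem~\ref{thm:right split param} together with Theorems~\ref{thm:bdd cpct b} and \ref{thm:bdd cpctphi}, Proposition~\ref{lem: extended phi-calc}(5), and the containments \eqref{eqn:split containments}: the block decomposition is with respect to $\Pi,\Piperp$, so the fibre-harmonic (top) row is governed by the b-calculus on $\calK$-valued sections, which supplies the shift $x^{-am}$, the fibre-perpendicular (bottom) row is governed by the $\phi$-calculus, and the $\Psi^{-m}_{b\phi,\ext}$-type entries are harmless because they vanish at $\lf$ and $\rf$ and hence map into arbitrarily highly weighted $\phi$-Sobolev spaces.

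For the remainders, compactness of $R_{r,\alpha}$ on $Y=x^\alpha L^2$ is immediate: by \eqref{eqn:Rright} it has order $-\infty$ and its kernel vanishes to infinite order at $\lf$, $\bff$ and $\ff$, so the strict inequalities in Theorem~\ref{thm:bdd cpctphi}(2) hold and it is compact. For $R_{l,\alpha}$, which must be compact on the non-standard space $X$, I would use \eqref{eqn:Rleft} to write it as $x^{-am}\Pi R'x^\infty+\Piperp R''x^\infty$ with $R',R''\in\Psi_\phi^{-\infty,\alpha}$, and note that $X\subset x^{\alpha-am}L^2$. The crucial point is that the $x^\infty$ factor forces the kernels to vanish to infinite order at $\ff$: hence $\Pi R'x^\infty$ collapses to a genuine b-operator of order $-\infty$ valued in $\calK$-sections over $V$, which by Theorem~\ref{thm:bdd cpct b}(2) maps $x^{\alpha-am}L^2$ compactly into $x^\alpha\Pi H^m_b$, so $x^{-am}\Pi R'x^\infty$ maps $X$ compactly into the fibre-harmonic part $x^{\alpha-am}\Pi H^m_b$ of $X$; meanwhile $\Piperp R''x^\infty$ remains a $\phi$-operator of order $-\infty$ with all boundary index sets empty except at $\lf$, and by Theorem~\ref{thm:bdd cpctphi}(2) maps $X$ compactly into $x^\alpha\Piperp H^m_\phi$, the fibre-perpendicular part of $X$. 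Thus $R_{l,\alpha}$ is compact on $X$, and the boundedness and compactness statements together yield that $P\colon x^\alpha H^m_\split\to x^\alpha L^2$ is Fredholm. I expect this last compactness step on $X$ — specifically the observation that infinite-order vanishing at $\ff$ turns the fibre-harmonic part of the remainder into a b-operator with the correct weight behaviour — to be the one genuinely nonroutine point; the rest is bookkeeping with the weights $x^{\pm am}$.

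The second assertion follows by duality. The formal $L^2(\dvol_b)$-adjoint $P^*$ is again a $\Pi$-split operator of order $m$, and from $\Pi P^*\Pi=(x^{am}P_{00})^*=(P_{00})^*x^{am}$ one reads off $(P^*)_{00}=x^{-am}(P_{00})^*x^{am}$; using $-\imspec(Q^*)=-(-\imspec(Q))$ and the conjugation rule $-\imspec(x^cQx^{-c})=-\imspec(Q)+c$ this gives $-\imspec((P^*)_{00})=-(-\imspec(P_{00}))-am$, so that $\gamma-am\notin-\imspec((P^*)_{00})$ is equivalent to $-\gamma\notin-\imspec(P_{00})$. Applying the first part of the theorem to $P^*$ with weight $\gamma=-\alpha$ shows that $P^*\colon x^{-\alpha}H^m_\split\to x^{-\alpha}L^2$ is Fredholm whenever $\alpha\notin-\imspec(P_{00})$. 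Taking adjoints and using $(x^{-\alpha}L^2)^*=x^\alpha L^2$ together with $(x^{-\alpha}H^m_\split)^*=x^\alpha H^{-m}_\split$ — the latter because near $\partial M$ the $L^2(\dvol_b)$-dual of $H^m_\split=x^{-am}\Pi H^m_b\oplus\Piperp H^m_\phi$ is $x^{am}\Pi H^{-m}_b\oplus\Piperp H^{-m}_\phi=H^{-m}_\split$ — we conclude that $P\colon x^\alpha L^2\to x^\alpha H^{-m}_\split$ is Fredholm, as claimed.
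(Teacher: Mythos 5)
Your proposal is correct and takes essentially the same route as the paper: Fredholmness is deduced from the parametrices of Theorem \ref{thm:right split param} via blockwise boundedness of $P$ and of $Q_{r,\alpha},Q_{l,\alpha}$, compactness of $R_{r,\alpha}$ on $x^\alpha L^2$ and of $R_{l,\alpha}$ on $x^\alpha H^m_\split$ (your use of the $x^\infty$ factor to kill the expansion at $\ff$ and collapse the $\Pi$ part of $R_{l,\alpha}$ to a b-operator is the same mechanism the paper invokes through \eqref{eqn: phi decomp} and Proposition \ref{lem:compos lemma}e)), and the second statement by applying the first to $P^*$ at weight $-\alpha$ and dualizing exactly as in the paper. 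One cosmetic point: the boundedness of $x^{am}P_{10}$ on $x^{\alpha-am}\Pi H^m_b$ follows simply from $P_{10}=\Piperp\Ptilde\Pi$ with $\Ptilde\in\Diff^m_\phi\subset\Diff^m_b$ together with the explicit prefactor $x^{am}$; the heuristic that every $\phi$-vector field contributes a factor $x^a$ is not needed (and fails for the fibre derivatives $\partial_{z_j}$), though your conclusion is unaffected.
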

Note that the conditions on $\alpha$ are the natural ones when only considering the $\Pi\,\Pi$ part of the operator. In the first statement, $x^{am}P_{00}$ is to map $x^{\alpha-am}H^m_b\to x^\alpha L^2$, so $P_{00}$ is to map $x^{\alpha-am}H^m_b\to x^{\alpha-am} L^2$, which is Fredholm if $\alpha-am\not\in-\imspec(P_{00})$. In the second statement, $x^{am}P_{00}$ is to map $x^\alpha L^2\to x^{\alpha+am}H^{-m}_b$, 
so $P_{00}$ is to map $x^\alpha L^2\to x^{\alpha}H^{-m}_b$, which is Fredholm if $\alpha\not\in-\imspec(P_{00})$.  
\begin{proof}
Conjugating by a power of $x$, we may assume $\alpha=0$. We leave out the index $\alpha$ for parametrices and remainders.
First, we show that $P$ is bounded $H^m_\split \to L^2$. We look at each term in \eqref{eqn:P} separately. Clearly, $x^{am}P_{00}=P_{00}'x^{am}:x^{-am}\Pi H_b^m\to L^2$ and $P_{11}: \Piperp H^m_\phi\to L^2$, so it remains to check the cross terms. Boundedness $x^{am}P_{01}: H^m_\phi \to L^2$ follows from $P_{01}\in\Psi^m_{\phi,\ext}$, and boundedness $x^{am}P_{10}:x^{-am} H^m_b\to L^2$ follows from  $P_{10} =\Piperp \Ptilde\Pi$ with $\Ptilde\in\Diff^m_\phi\subset\Diff^m_b$. 

Next, we check boundedness of the left and right parametrix $Q_l,Q_r:L^2\to H^m_\split$. First, consider the $\Pi\,\Pi$ terms. Clearly, any element of $x^{-am}\Psi_b^{-m,0}$ is bounded $L^2\to x^{-am}H_b^m$. Also, an element of $\Psi^{-m}_{b\phi}$ is bounded $L^2\to H_\phi^m$, so also into $x^{-am}H_b^m$. Next, the $\Piperp\Piperp$ terms are in $\Psi_\phi^{-m,0}$, hence bounded $L^2\to H^m_\phi$.
The $\Pi\,\Piperp$ term is in $x^{-am}\Psi_\phi^{-m,0}x^{am}$, and we need to show that it is bounded $L^2\to x^{-am}H^m_b$. Split the operator as in \eqref{eqn: phi decomp}. Then the first part is in $x^{-am}\Psi_{b,\ext}^{-\infty,0}$, hence ok, and the second part is in $\Psi_\phi^{-m,0}$, hence maps $L^2\to H^m_\phi$ and therefore into $x^{-am}H^m_b$.
Finally, the $\Piperp\Pi$ part of the parametrix is in $\Psi_\phi^{-m,0}$, hence bounded $L^2\to H^m_\phi$ as required.

It remains to prove compactness of the remainders. First, compactness of  $R_r:L^2\to L^2$ follows from Theorem \ref{thm:bdd cpctphi}.2.
To prove the compactness of $R_l:H^m_\split \to H^m_\split$, we first map
$$H^m_\split \hookrightarrow x^{-am}H^m_b \stackrel{x^{am}}{\longrightarrow} H^m_b\,.$$ 
We now consider $\Pi R_l$ and $\Piperp R_l$ separately. Using \eqref{eqn: phi decomp} again, we split 
$\Pi R_l$ as a sum of $R_{l}'\in x^{-am}\Psi_b^{-\infty,0}x^\infty$ and  $R_{l}''\in x^\infty \Psi_{b\phi}^{-\infty,0}$. Since elements of $\Psi_b^{-\infty,0}x^\infty$ are compact as operators $H^m_b\to H^m_b$ by Theorem \ref{thm:bdd cpct b}, the operator $R_{l}':H^m_b\to x^{-am}H_b^m$ is compact.  For the remaining terms $R_{l}''$ and $\Piperp R_l$, we first map $H^m_b\hookrightarrow H^m_\phi$. Both of these operators are compact $H^m_\phi\to H^m_\phi$ by Theorem \ref{thm:bdd cpctphi}, hence the inclusion $H^m_\phi\to H^m_\split$ completes the proof.

Finally, to prove the last statement, apply the first statement to $P^*$ and the weight $-\alpha$.  This is possible under the condition 
$-\alpha\not\in-\imspec((P_{00})^*)=\imspec(P_{00})$. Then take adjoints and identify the dual spaces of $x^{-\alpha}L^2$, $x^{-\alpha}H^m_\split$ with $x^{\alpha}L^2$, $x^{\alpha} H^{-m}_\split$ via the $L^2$ scalar product.
\end{proof}

\begin{theorem}
\label{thm:regularity}
 Suppose $u\in x^\alpha H^m_\split(M,E,\dvol_b)$ for some $\alpha\in\RR$ and $Pu=0$, where the operator $P$ is $\Pi$-split of order $m$. Then $\Pi u \in x^{-am} \calA^K_\phg(U,E)$ and $\Piperp u \in \calA^K_\phg(U,E)$ for some index set $K>\alpha$ determined by $\Specb(P_{00})$.

Also, if $u\in x^\alpha L^2(M,E,\dvol_b)$ and $Pu=0$ then $\Pi u\in \calA^K_\phg(U,E) $, $\Piperp u\in x^{am}\calA^K_\phg(U,E)$
with $K>\alpha$ determined by $\Specb(P_{00})$.
\end{theorem}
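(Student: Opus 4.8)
The plan is to deduce both statements from the left parametrix of Theorem~\ref{thm:right split param} in the usual way: when $Pu=0$, the identity $Q_lP=\Id-R_l$ gives $u=Q_l(Pu)+R_lu=R_lu$, and the polyhomogeneous expansion of $u$ near $\partial M$ is read off from the expansion of the left remainder $R_l$ at $\lf$.

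For the first statement, take the left parametrix $Q_{l,\alpha}$ and remainder $R_{l,\alpha}\in(x^{-am}\Pi+\Piperp)\Psi_\phi^{-\infty,\alpha}x^\infty$ of Theorem~\ref{thm:right split param}, available since $\alpha-am\notin-\imspec(P_{00})$, and write $R_{l,\alpha}=x^{-am}\Pi R'x^\infty+\Piperp R''x^\infty$ with $R',R''\in\Psi_\phi^{-\infty,\alpha}$. All three of $P$, $Q_{l,\alpha}$, $R_{l,\alpha}$ are bounded on the relevant weighted ($\phi$- and split) Sobolev spaces, so applying the identity to $u$ and using $Pu=0$ gives $u=R_{l,\alpha}u$; restricting to $U$ and using that $\Pi$ is constant in $x$ (hence commutes with $x^{-am}$) and idempotent, this reads $\Pi u=x^{-am}\Pi R'(x^\infty u)$ and $\Piperp u=\Piperp R''(x^\infty u)$. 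The substantive input is that — as recorded in the remark following Theorem~\ref{thm:right split param} — the remainder kernel is not merely of weight $>\alpha$ but is smooth in the interior of $M^2_\phi$, vanishes to infinite order at $\bff,\ff,\rf$, and is polyhomogeneous at $\lf$ with index sets $K_1,K_2>\alpha$ (for the $\Pi$- and $\Piperp$-parts) computed from $\Specb(P_{00})$ through the five steps of Section~\ref{sec:parametrix}; the solve-at-$\lf$ step, Proposition~\ref{lem:solve at lf}, is precisely what produces polyhomogeneous rather than merely conormal behaviour at $\lf$, cf.\ \cite[Section 5.20]{Me-aps}. Applying an operator with such a kernel to $u$ — the infinite-order vanishing at $\rf$ making the defining integral converge — yields a polyhomogeneous section whose index set equals the $\lf$-index set of the kernel, by substituting the term-by-term expansion of the kernel at $\lf$; this is exactly the mechanism used to prove b-regularity in Theorem~\ref{thm:b Fredholm regularity}. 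Hence $\Pi u\in x^{-am}\calA^{K_1}_\phg(U,E)$ and $\Piperp u\in\calA^{K_2}_\phg(U,E)$, and $K:=K_1\cup K_2>\alpha$ proves the first statement.

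For the second statement, first note $P$ is $\phi$-elliptic, so the small $\phi$-parametrix (Theorem~\ref{thm: phi-parametrix}) applied to $Pu=0$ gives $u\in\bigcap_s x^\alpha H^s_\phi$; in particular $u$ has finite weighted norms with weight $\alpha$ and arbitrary order. Now apply Theorem~\ref{thm:right split param} with the shifted weight $\beta:=\alpha+am$, admissible precisely when $\alpha=\beta-am\notin-\imspec(P_{00})$ (the hypothesis under which the companion Fredholm statement $x^\alpha L^2\to x^\alpha H^{-m}_\split$ of Theorem~\ref{thm:fredholm} holds). Since $Pu=0$ and the kernel of $R_{l,\beta}=x^{-am}\Pi R'x^\infty+\Piperp R''x^\infty$ still vanishes to infinite order at $\rf$ because of the $x^\infty$ factor — so $R_{l,\beta}$ is applicable to $u$ despite $\beta>\alpha$ — the identity $Q_{l,\beta}P=\Id-R_{l,\beta}$ again yields $u=R_{l,\beta}u$. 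Now the $\lf$-index sets $K_1,K_2$ of the $\Pi$- and $\Piperp$-parts of $R_{l,\beta}$ satisfy $K_i>\beta=\alpha+am$, so the argument of the previous paragraph gives $\Pi u=x^{-am}\Pi R'(x^\infty u)\in x^{-am}\calA^{K_1}_\phg(U,E)=\calA^{K_1-am}_\phg(U,E)$ and $\Piperp u=\Piperp R''(x^\infty u)\in\calA^{K_2}_\phg(U,E)=x^{am}\calA^{K_2-am}_\phg(U,E)$ with $K_i-am>\alpha$; taking $K:=(K_1-am)\cup(K_2-am)$ finishes.

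The main obstacle is the input used in both paragraphs: upgrading the conclusion of Theorem~\ref{thm:right split param} from ``$R_l$ has weight $>\alpha$'' to ``$R_l$ is polyhomogeneous at $\lf$ with index set governed by $\Specb(P_{00})$''. One must either track index sets through the construction of Section~\ref{sec:parametrix} — noting that the b-parametrix $Q_{00}$ is the only source of nontrivial $\lf$-behaviour, while all the $\phi$-parametrices and normal-operator inversions contribute empty index sets there — or invoke the promised sharpenings of that theorem and of Proposition~\ref{lem:solve at lf}. The remaining work is bookkeeping of the $x^{\pm am}$ prefactors and of the legitimacy of applying the weight-$\beta$ identity to a $u$ only a priori in $x^\alpha L^2$; this is exactly what turns ``$\Pi u\in x^{-am}\calA$, $\Piperp u\in\calA$'' of the first statement into ``$\Pi u\in\calA$, $\Piperp u\in x^{am}\calA$'' of the second once the weight is shifted by $am$.
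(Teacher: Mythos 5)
Your overall strategy is the paper's: use the left parametrix of Theorem~\ref{thm:right split param}, deduce $u=R_{l}u$ from $Pu=0$, read the expansion of $u$ off the structure $R_l\in(x^{-am}\Pi+\Piperp)\Psi_\phi^{-\infty,\cdot}x^\infty$ (which, by Proposition~\ref{lem:compos lemma}e), is a b-kernel vanishing to infinite order at $\rf$ and $\bff$ and polyhomogeneous at $\lf$), and for the second statement run the same argument with the weight shifted to $\alpha+am$. Two remarks on the mechanism: the ``main obstacle'' you single out is smaller than you suggest, because by Definition~\ref{def:Psi alpha} the spaces $\Psi_\phi^{-\infty,\alpha}$ are by construction unions of polyhomogeneous calculi, so $R_l$ \emph{is} polyhomogeneous at $\lf$ with some index set $>\alpha$ computable from $\Specb(P_{00})$, and the conclusion then follows from Theorem~\ref{th:b-phgreg}; and in the second statement the paper justifies applying the identity $Q_{l,\alpha+am}P=\Id-R_{l,\alpha+am}$ to $u\in x^\alpha L^2$ by the boundedness $Q_{l,\alpha+am}\colon x^\alpha H^{-m}_\split\to x^\alpha L^2$ and $P\colon x^\alpha L^2\to x^\alpha H^{-m}_\split$, which is cleaner than your detour through the small $\phi$-parametrix plus an informal appeal to the $x^\infty$ factor at $\rf$.

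There is, however, a genuine gap: you prove the theorem only for admissible weights. You write that the parametrix is ``available since $\alpha-am\notin-\imspec(P_{00})$'' (and, for the second statement, ``admissible precisely when $\alpha\notin-\imspec(P_{00})$''), but the theorem assumes nothing about $\alpha$; the exceptional set $-\imspec(P_{00})$ is nonempty in general, and for such $\alpha$ Theorem~\ref{thm:right split param} gives you no parametrix at weight $\alpha$ (resp.\ $\alpha+am$), so your argument simply does not apply. The paper closes this case by a perturbation: replace $\alpha$ by $\alpha-\eps$ with $\eps>0$ chosen so that $\alpha-\eps-am\notin-\imspec(P_{00})$ and so that the resulting index set $K$ has no exponents with real part in $(\alpha-\eps,\alpha)$; running the argument at weight $\alpha-\eps$ (legitimate since $x^\alpha H^m_\split\subset x^{\alpha-\eps}H^m_\split$) gives an expansion with exponents of real part $\geq\alpha$, and the borderline terms with real part exactly $\alpha$ must vanish because they are incompatible with $u\in x^\alpha H^m_\split$. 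Without this step (or some substitute) your proof covers only a dense but proper set of weights, whereas the statement claims all $\alpha\in\RR$.
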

In addition, $u$ is smooth in the interior of $M$ by elliptic regularity.
\begin{proof}
First, assume $\alpha-am\not\in-\imspec(P_{00})$. The identity $Q_{l,\alpha}P = \Id - R_{l,\alpha}$ holds on $x^\alpha H^m_\split$, since it holds on $C_0^\infty$ and both sides are continuous in $x^\alpha H^m_\split$ by the proof of Theorem \ref{thm:fredholm}. Hence it may be applied to $u$. From $Pu=0$ it follows that $u=R_{l,\alpha}u$. 
Now $\Psi^{-\infty,\alpha}_\phi x^\infty=\Psi^{-\infty,\alpha}_b x^\infty$ as in Proposition \ref{lem:compos lemma}e), and the result follows from \eqref{eqn:Rleft} and the mapping properties of b-operators. If $\alpha-am\in-\imspec(P_{00})$ then apply this argument with $\alpha$ replaced by $\alpha-\eps$ where $\eps>0$ is such that $\alpha-\eps\not\in-\imspec(P_{00})$ and the resulting index set $K$ has no powers in the interval $ (\alpha-\eps,\alpha)$. Then the expansion of $u$ has powers with real parts $\geq \alpha$, but those terms with equality must vanish since they are not in 
$x^\alpha H^m_\split$. 

For the second statement, observe that 
$Q_{l,\alpha+am} $ is bounded $x^\alpha H^{-m}_\split \to x^\alpha L^2$. This follows from the same arguments as in the proof of Theorem \ref{thm:fredholm}. Therefore, the identity $Q_{l,\alpha+am}P = \Id - R_{l,\alpha+am}$ holds on $x^\alpha L^2$. Therefore $u=R_{l,\alpha+am}u$, and the claim follows as before. 
\end{proof}

Before we prove Theorems \ref{Thm1} and \ref{Thm2} we need to define the operator $D_V$ used there.
The operator $P_{00}$ 
in \eqref{eqn:P_00} acts on sections of $\calK$, defined after \eqref{eqn:def K}, while the theorems are stated in terms of sections of $\calKbar$.
Because of $\calK=(\Lambda \bT^*V) \otimes 
\tilde\calK$ we may consider $P_{00}$ as
acting on sections of $\tilde\calK$ instead. Recall that elements of $\tilde\calK$ are sections of $\Lambda x^a T^*F$ over $F$. There is an obvious bundle map which identifies $\Lambda x^a T^*F$ with  $\Lambda T^*F$. It sends $\calK$ to the space $\calKbar$ of harmonic forms on $F$ for the metric $h_{|F}$. 
Under this identification, $P_{00}$ turns into the operator 
\begin{equation}\label{eqn:D_V}
D_V = \frakd + \frakd^* + xD_x 
+x^a \Pi {\bf R} \Pi
\end{equation}
 since the term $A$ in \eqref{eqn: def A} arose from the commutators $[x\partial_x,x^{a|L|}]$ when introducing the scaling factors $x^a$, and in $\calKbar$ these factors are eliminated again.

\begin{proof}[Proof of Theorems \ref{Thm1} and \ref{Thm2}]
We first prove the Fredholm theorem for $D_M$. Recall that $D_M=x^{-a}P$ where $P$ is $\Pi$-split of order 1, for the bundle $E=\Lambda\cphiT^*M$ and $\calK\subset\Cinfty(F,E)$ as defined after \eqref{eqn:def K}.   Theorem  \ref{thm:fredholm}  shows that $P:x^\alpha H^1_\split(M,E,\dvol_b) \to x^\alpha L^2(M,E,\dvol_b)$ is Fredholm if $\alpha-a\not\in -\imspec (P_{00})$. Over the interior of $M$, we may identify the bundles $\Lambda T^*M$, with metric $g$, with the bundle $E$, with regular metric on all of $M$. Therefore, we get that $D_M:x^\alpha H^1_\split(M,\Lambda T^*M,\dvol_b) \to x^{\alpha-a} L^2(M,\Lambda T^*M,\dvol_b)$ is Fredholm. Setting $\gamma=\alpha-a$ gives the claim.

The Fredholm claim for $\Delta_M$ is proved in the same way, using the condition $[D_B,\Pi]=0$ to get that $\Delta_M=x^{-2a}T$ for a $\Pi$-split operator $T$ of order 2 (see \eqref{eqn:Delta_M} and the remarks following it). These remarks also show that the condition implies $T_{00}=(P_{00})^2+O(x^{2a})$, so the indicial families of $T_{00}$ and $(P_{00})^2$ coincide, implying that
$-\imspec(T_{00}) = -\imspec(P_{00})=-\imspec(D_V)$. Therefore, the condition on $\gamma$ is the same as for $D_M$. 

Finally, the polyhomogeneity claims in Theorems \ref{Thm1} and \ref{Thm2} follow from the second statement in Theorem \ref{thm:regularity}, using that 
$x^w(\log x)^k$ is in $L^2(M,E,\dvol_b)$ iff $\Re w>0$.
\end{proof}



\begin{thebibliography}{1}

\bibitem{GH1} D. Grieser, E. Hunsicker, {\em Pseudodifferential operator calculus for generalized Q-rank 1 locally symmetric spaces, I}, Journal of Functional Analysis 257 (2009) 3748Ð3801.
\bibitem{GH2} D. Grieser, E. Hunsicker, {\em Techniques for the b-calculus}, in preparation.
\bibitem{GH3} D. Grieser, E. Hunsicker, {\em Pseudodifferential operator calculus for generalized Q-rank 1 locally symmetric spaces, II}, in preparation.
\bibitem{Gbbc} D. Grieser,  {\em Basics of the b-calculus}, in
  J.B.Gil et al. (eds.), {\em Approaches to Singular Analysis}, 30-84,
  Operator Theory: Advances and Applications, 125.
  Advances in Partial Differential Equations,
Birkh\"auser, Basel, 2001.

\bibitem{HHM} T. Hausel, E. Hunsicker, and R. Mazzeo, {\em
Hodge cohomology of gravitational instantons}, Duke Mathematical
Journal {\bf 122} (2004), no.3, 485-548.


\bibitem{MaMe}  R. Mazzeo and R. Melrose, {\em Pseudodifferential
operators on manifolds with fibred boundaries} in {\em ``Mikio Sato:
a great Japanese mathematician of the twentieth century.''}, Asian J.
Math. {\bf 2} (1998) no. 4, 833--866.

\bibitem{Me-aps} R. Melrose, {\em The Atiyah-Patodi-Singer index theorem},
A.K. Peters, Newton (1991).

\bibitem{Me-icm} R. Melrose, {\em Pseudodifferential operators, corners and singular limits},  Proc. Int. Congr. Math., Kyoto/Japan 1990, Vol. I, 217-234 (1991).

\bibitem{Me-mwc}  R. Melrose, {\em Differential analysis on manifolds with corners},
in preparation, partially available at http://www-math.mit.edu/$\sim$rbm/book.html.

\bibitem{JM} M\"uller, J\"orn, {\em A Hodge-type theorem for manifolds with fibered cusp metrics}, Geom. Funct. Anal. 21 (2011), no. 2, 443Ð482.

\bibitem{muller} W. M\"uller, {\em  Manifolds with cusps of rank $1$},
Lecture Notes in Math., vol. 1244, Springer-Verlag, New York (1987).

\bibitem{Va} B. Vaillant, {\em Index and spectral theory for manifolds
with generalized fibred cusps}, Ph.D. thesis, Univ. of Bonn, 2001.
arXiv:math-DG/0102072.


\end{thebibliography}
\end{document}